\documentclass[12pt]{article}

\usepackage{amsmath,amsthm,amsfonts,amssymb}
\RequirePackage[colorlinks,citecolor=blue,urlcolor=blue]{hyperref}
\usepackage{color}
\usepackage{url}


\numberwithin{equation}{section}

\newtheorem{theorem}{Theorem}[section]

\newtheorem{proposition}[theorem]{Proposition}
\newtheorem{lemma}[theorem]{Lemma}
\newtheorem{definition}[theorem]{Definition}
\newtheorem{remark}[theorem]{Remark}


\def\cD{\mathcal{D}}

\def\cF{\mathcal{F}}

\def\cH{\mathcal{H}}

\def\cS{\mathcal{S}}

\newcommand{\ud}{\ensuremath{\mathrm{d}}}
\newcommand{\Norm}[1]{\left|\left|  #1   \right|\right|}
\newcommand{\E}{\mathbb{E}}
\newcommand{\e}{\mathrm{e}}

\def\bC{\mathbb{C}}

\def\bP{\mathbb{P}}
\def\bR{\mathbb{R}}
\def\R{\mathbb{R}}

\topmargin -0.4in
\headsep 0.4in
\textheight 9.0in
\oddsidemargin 0.02in
\evensidemargin 0.15in
\textwidth 6.3in


\begin{document}

\title{Parabolic Anderson Model with \\
space-time homogeneous Gaussian noise \\
and rough initial condition}

\author{Raluca M. Balan\footnote{Corresponding author. University of Ottawa, Department of Mathematics and Statistics.
585 King Edward Avenue, Ottawa, ON, K1N 6N5, Canada. {\it E-mail:} \url{rbalan@uottawa.ca}.} \footnote{Research supported by a
grant from the Natural Sciences and Engineering Research Council
of Canada.}
\and
Le Chen\footnote{University of Kansas, Department of Mathematics. 405 Snow Hall, 1460 Jayhawk Blvd. Lawrence, Kansas, 66045-7594, USA. {\it E-mail}: \url{chenle@ku.edu}.}}

\date{June 28, 2016}
\maketitle

\begin{abstract}
\noindent In this article, we study the Parabolic Anderson Model driven by a space-time homogeneous Gaussian noise on $\mathbb{R}_{+} \times \mathbb{R}^d$, whose covariance kernels in space and time are locally integrable non-negative functions, which are non-negative definite (in the sense of distributions). We assume that the initial condition is given by a signed Borel measure on $\mathbb{R}^d$, and the spectral measure of the noise satisfies Dalang's (1999) condition. Under these conditions, we prove that this equation has a unique solution, and we investigate the magnitude of the $p$-th moments of the solution, for any $p \geq 2$. In addition, we show that this solution has a H\"older continuous modification with the same regularity and under the same condition as in the case of the white noise in time, regardless of the temporal covariance function of the noise.
\end{abstract}

\noindent {\em MSC 2010:} Primary 60H15; Secondary 60H07, 37H15.


\vspace{1mm}

\noindent {\em Keywords:} stochastic partial differential equations, rough initial conditions, Parabolic Anderson Model,
Malliavin calculus, Wiener chaos expansion.

\section{Introduction}

In this article, we consider the stochastic heat equation:
\begin{equation}
\left\{\begin{array}{rcl}
\displaystyle \frac{\partial u}{\partial t}(t,x) & = & \displaystyle \frac{1}{2}\Delta u(t,x)+\lambda u(t,x)\dot{W}(t,x), \quad t>0, x \in \bR^d, \:\\[2ex]
\displaystyle u(0,\cdot) & = & u_0(\cdot), \\[1ex]
\end{array}\right.
\label{E:heat} 
\end{equation}
with $\lambda \in \bR$, driven by a zero-mean Gaussian noise $\dot{W}$ defined on a probability space $(\Omega,{\cal F},\bP)$, whose covariance is given informally by:
\begin{align}\label{E:CovW}
 \E\left[\dot{W}(t,x)\dot{W}(s,y)\right] = \gamma(t-s)f(x-y),
\end{align}
for some non-negative and non-negative definite functions $\gamma$ and $f$. The functions $\gamma$ and $f$ are the Fourier transforms (in the sense of distributions) of two tempered measures $\nu$, respectively $\mu$, and hence
the noise is homogeneous in both time and space, i.e. its covariance is invariant under translations. The wave equation with the same type of noise and constant initial conditions has been recently studied in \cite{balan-song16}.

This problem is known in the literature as the {\em Parabolic Anderson Model},
which refers to the fact that the noise enters the equation multiplied by the function $\sigma(u)=\lambda u$.
We are interested in the existence and properties of the random-field solution $u=\{u(t,x); t >0, x \in \bR^d\}$ of equation \eqref{E:heat} interpreted in the Skorohod sense. This means that the solution is defined using a stochastic integral corresponding to the divergence operator from Malliavin calculus.
We refer the reader to Section \ref{S:Back} below for the rigorous definition of the noise and the solution. The novelty of our investigations lies in the fact that we consider initial data given by a signed Borel measure $u_0$ on $\bR^d$, which satisfies the condition:
\begin{align}
\label{E:InitCond}
\int_{\bR^d}e^{-a|x|^2}|u_0|(\ud x)<\infty \quad \mbox{for all} \ a>0,
\end{align}
where $|x|=(x_1+\dots+x_d)^{1/2}$. Here $|u_0|:= u_{0,+}+u_{0,-}$, where $u_0=u_{0,+}-u_{0,-}$  is the Jordan decomposition and
$u_{0,\pm}$ are two non-negative Borel measures with disjoint support.

The Parabolic Anderson Model was originally studied in \cite{CarmonaMolchanov94} in the case when $d=1$ and $\dot{W}$ is replaced by a space-time white noise. In the recent years, there has been a lot of interest in studying the solutions of stochastic partial differential equations (s.p.d.e.'s) driven by a more general Gaussian noise. When the noise is white in time (i.e. the noise behaves in time like a Brownian motion, so that informally, $\gamma=\delta_0$, where $\delta_0$ is the Dirac distribution at $0$), the stochastic integral used in the definition of the solution can be constructed similarly to It\^o's integral, using martingale techniques. In this case, it is known from \cite{Dalang99} that a large class of s.p.d.e.'s  have random-field solutions, under {\em Dalang's condition:}
\begin{align}
\label{E:Dalang}
 \Upsilon(\beta):= (2\pi)^{-d}\int_{\R^d}\frac{\mu(\ud \xi)}{\beta+|\xi|^2}<+\infty\quad
 \text{for some (and hence for all) $\beta>0$,}
\end{align}
where $\mu$ is the spectral measure of the noise in space (defined by \eqref{E:f-Fourier-mu} below). This class includes the heat and wave equations with a Lipschitz non-linear function $\sigma(u)$ multiplying the noise.
These equations have been studied extensively and their solutions possess many interesting properties (see \cite{dalang-mueller09,dalang-sanz09,FK09EJP,SanzSoleSarra02,sanz-sus15} for a sample of relevant references). Most of these properties have been derived for initial conditions given by functions satisfying certain regularity conditions. Recently, some of these properties have been extended to rough initial data (such as Borel measures on $\bR^d$), in the case of the heat equation driven by a space-time white noise (see \cite{ChenDalang13Holder,ChenDalang13Heat}), or even a Gaussian noise which is white in time and colored in space (see \cite{chen-huang16,CK15Color}). The recent preprint \cite{HLN15} carefully analyzes the solution to the Parabolic Anderson Model driven by a Gaussian noise which is white in time and behaves in space either like Dalang's type noise, or like a fractional Brownian motion with $H \in (\frac{1}{4},\frac{1}{2}]$ (when $d=1$). Moreover, in \cite{HLN15} it is assumed that the initial data is given by a function $u_0$ which satisfies \eqref{E:InitCond} with $u_0(\ud x)$ replaced by $u_0(x)\ud x$.

In the present article, we build upon this theory, by studying the Parabolic Anderson Model driven by a Gaussian noise, which is correlated also in time, with temporal covariance kernel given by a locally integrable function $\gamma$. An example which received a lot of attention in the literature is $\gamma(t)=H(2H-1)|t|^{2H-2}$ with $H \in (\frac{1}{2},1)$. In this case, the noise behaves in time like a fractional Brownian motion with index $H$, and
the stochastic integral used for defining the solution has to be constructed using different techniques (usually, Malliavin calculus). The major difficulty is to show that the sequence of Picard iterations converges. This remains an open problem, in the case of equations containing a Lipshitz non-linear function $\sigma(u)$ multiplying the noise. However, as observed in \cite{hu-nualart09}, this problem has a surprisingly simple solution when $\sigma(u)=\lambda u$. In this case, the solution has an explicit series representation (given by its Wiener chaos expansion), and the necessary and sufficient condition for the existence (and uniqueness) of the solution is that this series converges in $L^2(\Omega)$. This method yields immediately an upper bound for the $p$-the moment of the solution, using the equivalence of the $L^p(\Omega)$-norms on the same Wiener chaos space. When the initial condition is given by a bounded function, this technique was used to investigate the properties of the solutions (see for instance \cite{balan-conus16, HHNT15, C3H16}). In \cite{C3H16}, the heat operator was replaced by general fractional operators in both time and space variables.

The goal of this article is to use the same method based on Wiener chaos expansion to prove the existence of the solution of equation \eqref{E:heat}, with initial data given by a signed measure $u_0$ satisfying \eqref{E:InitCond}. In particular, Dirac delta initial data was used in the theory of
Borodin, Corwin and their coauthors for equations driven by space-time white noise in spatial dimension $d=1$ (see e.g. \cite{borodin-corwin14}).
Since the initial data plays an important role in the form of the kernels $f_n(\cdot,t,x)$ appearing in the series representation of the solution $u(t,x)$
(see \eqref{E:def-fn} below),
new ideas are required to show that this series converges in $L^2(\Omega)$. This leads to calculations that deviate significantly from the case of bounded initial conditions. The starting point of these calculations is an elementary result borrowed from \cite{ChenDalang13Heat} (see Lemma \ref{L:GG} below), which is specific to the heat equation. In fact, $f_n(\cdot,t,x)$ depends on $u_0$ through the solution $J_0$ of the homogeneous heat equation with initial data $u_0$, defined by:
\begin{align}
\label{E:J0}
J_0(t,x)=\int_{\bR^d}G(t,x-y)u_0(\ud y),
\end{align}
where $G(t,x)$ the fundamental solution of the heat equation in $\bR^d$:
$$G(t,x)=\frac{1}{(2\pi t)^{d/2}}\exp\left(-\frac{|x|^2}{2t}\right), \quad t>0,\: x \in \bR^d.$$ Therefore, \eqref{E:InitCond} is the weakest condition one has to impose on $u_0$ to ensure that the series converges. To see this, it suffices to note that the first term of the series is $J_0(t,x)$, and $|J_0(t,x)| \leq J_{+}(t,x)$, where
\begin{align}
\label{E:J+}
J_{+}(t,x)=\int_{\bR^d}G(t,x-y)|u_0|(\ud y).
\end{align}
A simple argument shows that condition \eqref{E:InitCond} is equivalent to
$$J_{+}(t,x) < +\infty\qquad\text{for all $t>0$ and $x\in\R^d$}.$$

After establishing the existence of the solution, we proceed to a careful analysis of the order of magnitude of the $p$-th moments of the solution. This investigation reveals that we have to distinguish between
two different scenarios. When $\gamma$ is integrable on $\bR$, under a slightly stronger requirement on $u_0$ (given by \eqref{E:expo-u0} below), we show that $\E|u(t,x)|^p \leq c_1\exp(c_2 t)$ for $t$ large, uniformly in $x \in \bR^d$, regardless of the spatial covariance kernel $f$. In this case, the smoothness of the noise in time overcomes both the roughness of the noise in space and the roughness of the initial data, leading to the same behaviour of the solution as in the case of equations with space-time white noise and bounded initial condition.
On the other hand, if $f$ is the Riesz kernel of order $\alpha$, $\E|u(t,x)|^p \leq c_1 J_{+}^p(t,x) \exp(c_2 \Gamma_t^{2/(2-\alpha)}t)$, where
\begin{align}
\label{E:Gammat}
 \Gamma_t:=\int_{-t}^{t}\gamma(s)\ud s=2\int_0^t \gamma(s)\ud s.
\end{align}In this case, the behaviour of the solution retains all the characteristics of the noise, combined with the roughness of the initial data.

Proving that the solution is $L^p(\Omega)$-continuous relies on some technical arguments, which we placed in Appendix B to preserve the natural reading flow. Finally, in the last part of the article, we prove that the solution to equation \eqref{E:heat} has a H\"older continuous modification, with the same orders of regularity and under the same condition on the spectral measure $\mu$, as in the case of equations with white noise in time. This shows that neither the correlation of the noise in time, nor the initial data affects the sample path regularity of the solution. A similar fact was observed in \cite{balan-song16} in the case of the wave equation with constant initial conditions. The proof of this result follows from Kolmogorov's continuity criterion, by refining the bounds obtained in Appendix B for the $p$-th moments of the increments of the solution.


The main results of this article are summarized by the following three theorems.
We let $\Norm{\cdot}_p$ be the $L^p(\Omega)$-norm. The rigorous meaning of solution is given by Definition \ref{D:Sol} below.

\begin{theorem}
\label{T:ExUni}
Assume that Dalang's condition \eqref{E:Dalang} holds.

\noindent
(a) Then, for any Borel measure $u_0$ on $\R^d$ that satisfies \eqref{E:InitCond},
equation \eqref{E:heat} has a unique random field solution
$\left\{u(t,x): t>0,x \in\R^d\right\}$.
For any $p \geq 2$,
\begin{align}
\label{E:u-p}
 \Norm{u(t,x)}_p\le J_+(t,x) \: \widetilde{H}\left(t;2\lambda^2(p-1)\Gamma_t \right) ,
\end{align}
where $\Gamma_t$ is defined in \eqref{E:Gammat} and $\widetilde{H}(t;\gamma)$ is defined in \eqref{E:H} below.
In particular, for any $a>1$,
\begin{equation}
\label{E:sup-Ka}\sup_{(t,x) \in K_a}E\left(|u(t,x)|^p\right)<\infty,
\end{equation}
where $K_a=[1/a,a] \times [-a,a]^d$.
Moreover, $u$ is $L^p(\Omega)$-continuous on $(0,\infty) \times \bR^d$
for all $p \geq 2$.

\noindent
(b) If $\gamma\in L^1(\R)$, i.e., $\Gamma_\infty:=\lim_{t\rightarrow\infty}\Gamma_t <\infty$, and the initial measure $u_0$ satisfies
\begin{equation}
\label{E:expo-u0}
\int_{\bR^d} e^{-\beta|x|}|u_0|(\ud x)<\infty \quad \mbox{for all} \ \beta>0,
\end{equation}
then for all $p\ge 2$,
 \[
 \sup_{x\in\R^d} \limsup_{t\rightarrow\infty}\frac{1}{t}\log\Norm{u(t,x)}_p\le \inf\left\{\beta>0 \::\: \Upsilon(2\beta)<\left[4\lambda^2 (p-1)\Gamma_\infty  \right]^{-1}\right\}.
 \]

\noindent
(c) Assume that $\gamma\in L^1(\R)$ and the initial measure $u_0$ satisfies \eqref{E:expo-u0}. If
 \begin{equation}
 \label{E:Upsilon0}
  \Upsilon(0):=\lim_{\beta\rightarrow 0} \Upsilon(\beta)<\infty
 \end{equation}
(which happens only when $d\ge 3$), then there exists some critical value $\lambda_c>0$ such that
 when $|\lambda|<\lambda_c$,
 \[
 \sup_{x\in\R^d} \limsup_{t\rightarrow\infty}\frac{1}{t}\log\Norm{u(t,x)}_p =0.
 \]
\end{theorem}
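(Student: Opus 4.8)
\emph{Overview and part (a).} The plan is to construct the solution from its Wiener chaos expansion, in the spirit of Hu--Nualart. By Definition~\ref{D:Sol}, a random field $u$ solves \eqref{E:heat} in the Skorohod sense precisely when, for every $(t,x)$, the series $u(t,x)=\sum_{n\ge 0}I_n\big(f_n(\cdot,t,x)\big)$ converges in $L^2(\Omega)$, where $f_n(\cdot,t,x)$ is the symmetrization of the kernel in \eqref{E:def-fn}, an iterated space--time convolution of heat kernels $G$ with the function $J_0$ of \eqref{E:J0} sitting in the innermost slot. By the isometry, existence (and automatic uniqueness) of the solution is equivalent to $\sum_{n\ge 0}n!\,\Norm{f_n(\cdot,t,x)}_{\cH^{\otimes n}}^2<\infty$, and the case $p=2$ of \eqref{E:u-p} will fall out of the estimate for this sum. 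The decisive step is to bound $\Norm{f_n(\cdot,t,x)}_{\cH^{\otimes n}}^2$: writing it as an integral over two copies of the time simplex, in which $\gamma$ couples the two time configurations and, after Fourier transforming in space, the spectral measure $\mu$ couples the frequencies, one isolates the effect of the initial datum via Lemma~\ref{L:GG} --- the device that lets one factor out $J_+(t,x)^2$. \emph{This is where the argument genuinely departs from the bounded-initial-condition theory:} the measure $u_0$ is anchored at the bottom of the convolution chain and cannot be handled by a sup-norm. The remaining frequency integrals are controlled by Dalang's condition \eqref{E:Dalang} and the time integrals by $\Gamma_t$ of \eqref{E:Gammat}, giving $\Norm{f_n(\cdot,t,x)}_{\cH^{\otimes n}}^2\le J_+(t,x)^2\,(2\lambda^2\Gamma_t)^n\,a_n(t)$ for coefficients $a_n(t)$ whose generating series is, by definition, $\widetilde H(t;\cdot)^2$ from \eqref{E:H}; summing yields \eqref{E:u-p} for $p=2$. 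For $p>2$ one applies hypercontractivity on each chaos (the $L^p(\Omega)/L^2(\Omega)$ ratio on the $n$-th chaos is $(p-1)^{n/2}$) together with Minkowski's inequality in $L^p(\Omega)$ applied to the series, which upgrades $\Gamma_t$ to $(p-1)\Gamma_t$ and gives \eqref{E:u-p} in general. Then \eqref{E:sup-Ka} is immediate from local boundedness of $J_+$ and of $\widetilde H$, and the $L^p(\Omega)$-continuity follows from the increment estimates deferred to Appendix~B.

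\emph{Part (b).} Start from \eqref{E:u-p}. Since $\gamma\in L^1(\R)$ we have $\Gamma_t\le\Gamma_\infty<\infty$, and because $\widetilde H(t;\theta)$ is non-decreasing in $\theta\ge 0$ (a power series in $\theta$ with non-negative coefficients), $\Norm{u(t,x)}_p\le J_+(t,x)\,\widetilde H\big(t;2\lambda^2(p-1)\Gamma_\infty\big)$. It remains to control both factors on the exponential scale. For $J_+$, the elementary bound $G(t,x-y)\le (2\pi t)^{-d/2}\e^{\beta^2 t/2}\e^{\beta|x|}\e^{-\beta|y|}$, valid for every $\beta>0$ (from $-|z|^2/2t\le\beta^2t/2-\beta|z|$ and $|x-y|\ge|y|-|x|$), combined with \eqref{E:expo-u0} gives $J_+(t,x)\le(2\pi t)^{-d/2}\e^{\beta^2t/2}\e^{\beta|x|}\int_{\R^d}\e^{-\beta|y|}\,|u_0|(\ud y)$, so for each fixed $x$, $\limsup_{t\to\infty}\tfrac1t\log J_+(t,x)\le\beta^2/2$, and letting $\beta\downarrow 0$, $\sup_x\limsup_{t\to\infty}\tfrac1t\log J_+(t,x)\le 0$. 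For $\widetilde H(t;\theta)$ with $\theta=2\lambda^2(p-1)\Gamma_\infty$, a direct computation with the series \eqref{E:H} --- each term's Laplace transform contributing a power of $2\theta\,\Upsilon(2\beta)$ --- shows $\int_0^\infty\e^{-2\beta t}\widetilde H(t;\theta)^2\,\ud t<\infty$ whenever $\Upsilon(2\beta)<[4\lambda^2(p-1)\Gamma_\infty]^{-1}$; since $\widetilde H(t;\theta)$ is non-decreasing in $t$ (the $a_n(t)$ are increasing), finiteness of this Laplace transform forces $\limsup_{t\to\infty}\tfrac1t\log\widetilde H(t;\theta)\le\beta$. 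Adding the two contributions and infimizing over admissible $\beta$ yields the bound in (b).

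\emph{Part (c).} This combines (b) with a matching lower bound. Set $\lambda_c:=\big(4(p-1)\Gamma_\infty\Upsilon(0)\big)^{-1/2}$, which is positive since $\Upsilon(0)<\infty$. If $|\lambda|<\lambda_c$ then $4\lambda^2(p-1)\Gamma_\infty\Upsilon(0)<1$, and as $\Upsilon(2\beta)\uparrow\Upsilon(0)$ when $\beta\downarrow 0$ there are arbitrarily small $\beta>0$ with $\Upsilon(2\beta)<[4\lambda^2(p-1)\Gamma_\infty]^{-1}$; hence the infimum in (b) equals $0$, so $\sup_x\limsup_t\tfrac1t\log\Norm{u(t,x)}_p\le 0$. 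For the reverse inequality, the chaos expansion gives $\E[u(t,x)]=J_0(t,x)$, whence $\Norm{u(t,x)}_p\ge\Norm{u(t,x)}_1\ge|J_0(t,x)|$; since $J_0$ solves the homogeneous heat equation, one checks that $|J_0(t,x)|$ decays at most polynomially in $t$ at suitable $x$ (e.g. $(2\pi t)^{d/2}|J_0(t,x)|$ stays bounded away from $0$ along some sequence $t\to\infty$), so $\sup_x\limsup_t\tfrac1t\log\Norm{u(t,x)}_p\ge 0$, and equality follows.

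\emph{Main obstacle.} The heart of the matter is the chaos-norm estimate underlying part (a): one must bound $\Norm{f_n(\cdot,t,x)}_{\cH^{\otimes n}}^2$ so as to simultaneously (i) extract the correct prefactor $J_+(t,x)^2$ --- the role of Lemma~\ref{L:GG}, with no analogue in the bounded-data theory since $u_0$ enters only at the innermost convolution; (ii) reproduce the precise combinatorics of the coefficients of $\widetilde H$ in \eqref{E:H}, so that the series converges; and (iii) retain sharp enough constants that, in part (b), the Laplace transform converges exactly at the threshold $\Upsilon(2\beta)=[4\lambda^2(p-1)\Gamma_\infty]^{-1}$, i.e. that no loss occurs in the Lyapunov constant. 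The remaining ingredients --- the hypercontractivity passage to $L^p(\Omega)$, the Gaussian estimate for $J_+$, and the Tauberian step for $\widetilde H$ --- are then routine.
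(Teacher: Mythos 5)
Your outline does follow the paper's route (chaos expansion, Lemma~\ref{L:GG} to extract $J_+$, hypercontractivity plus Minkowski, a Laplace-transform/Tauberian step for the growth rate, and the same $\lambda_c$), but there is a genuine gap exactly where you yourself locate the ``heart of the matter'': the chaos-norm estimate of part (a) is asserted, not proved. After Lemma~\ref{L:GG} factors out $J_+(t,x)^2$ and Cauchy--Schwarz together with Lemma~\ref{basic-ineq-lemma} produces the factor $\Gamma_t^n$, what remains (Lemma~\ref{L:Jn}) is the simplex integral of $\int_{\R^{nd}}\exp\bigl\{-\sum_{k=1}^{n}\tfrac{t_{k+1}-t_k}{t_kt_{k+1}}\bigl|\sum_{j\le k}t_j\xi_j\bigr|^2\bigr\}\,\mu(\ud\xi_1)\cdots\mu(\ud\xi_n)$, in which the frequencies are coupled through the partial sums $\sum_{j\le k}t_j\xi_j$ and the exponents degenerate as $t_{k+1}-t_k\to0$; saying these integrals are ``controlled by Dalang's condition'' is not an argument, because \eqref{E:Dalang} bounds $\int\mu(\ud\xi)/(\beta+|\xi|^2)$, not this coupled Gaussian. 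The paper needs two specific devices here which your proposal neither supplies nor replaces: the maximum principle of Lemma~\ref{max-principle-lemma} (which uses $f\ge0$) to replace $\bigl|\sum_{j\le k}t_j\xi_j\bigr|^2$ by $|t_k\xi_k|^2$ and decouple the $\xi$'s (Lemma~\ref{L:I-bnd}), and the recursion of Lemma~\ref{L:Int-I} bounding $\int_{0<t_1<\cdots<t_n<t}\prod_i k\bigl(2(t_{i+1}-t_i)t_i/t_{i+1}\bigr)$ by $2^n h_n(t)$, which is what turns the time integrals into convolution powers of $k$ and, via \eqref{int-hn}, into powers of $2\Upsilon(2\beta)$ --- precisely the input your part (b) takes for granted. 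Without this, your bound $\Norm{f_n(\cdot,t,x)}_{\cH^{\otimes n}}^2\le J_+^2(2\lambda^2\Gamma_t)^na_n(t)$ has unspecified $a_n$, and your identification of their generating series with $\widetilde H(t;\cdot)^2$ is in fact incorrect: by \eqref{E:H} the generating function of the $h_n$ is $H$, while $\widetilde H$ only arises after the termwise square roots produced by hypercontractivity and Minkowski, as in \eqref{E:u-p}. Your closing ``main obstacle'' paragraph concedes that items (i)--(iii) are unresolved, so the central estimate of the theorem is missing.

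Two smaller points. In part (b), with the weight $e^{-2\beta t}$ the Laplace transform of $h_n$ yields $[2\Upsilon(4\beta)]^n$, not $[2\Upsilon(2\beta)]^n$; your stated sufficient condition still implies finiteness since $\Upsilon$ is nonincreasing, but the cleaner route is the paper's: bound $\int_0^\infty e^{-\beta t}\widetilde H(t;\gamma)\,\ud t$ directly and invoke the monotonicity lemma (Lemma~\ref{L:LimSupHt}); your Gaussian bound for $J_+$ under \eqref{E:expo-u0} matches \eqref{E:log-J0}. In part (c), your lower-bound sketch via $\Norm{u(t,x)}_p\ge|J_0(t,x)|$ with ``at most polynomial decay at suitable $x$'' is not justified for signed $u_0$ (and fails for degenerate choices such as $u_0=0$); the paper deduces the equality from \eqref{E:log-J0}, and your upper-bound argument and value of $\lambda_c$ do agree with Lemma~\ref{L:Variation} and the paper's proof. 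Finally, the $L^p(\Omega)$-continuity claim also needs the uniform-on-compacts convergence of the partial sums (via \eqref{E:sup-J0}), not just the increment estimates you defer to the appendix.
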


\begin{theorem}\label{T:Riesz}
Suppose that $\mu(\ud\xi)=|\xi|^{-(d-\alpha)}\ud\xi$ for some $0<\alpha<d\wedge 2$
and the initial measure $u_0$ satisfies \eqref{E:InitCond}.
Then equation \eqref{E:heat} has a unique solution $\left\{u(t,x);t>0, x\in \bR^d\right\}$
which satisfies the following moment bound:
\begin{align}\label{E:Riez-Mom}
\E\left[ |u(t,x)|^p\right] \leq C^p J_+^p(t,x) \exp\left(C p^{(4-\alpha)/(2-\alpha)} |\lambda|^{4/(2-\alpha)} \Gamma_t^{2/(2-\alpha)}t\right),
\end{align}
for all $p\ge 2$, $t>0$ and $x\in\R^d$, where $C>0$ is some universal constant.
\end{theorem}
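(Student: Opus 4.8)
The plan is to obtain Theorem~\ref{T:Riesz} from Theorem~\ref{T:ExUni}(a). First I would check that the Riesz measure satisfies Dalang's condition~\eqref{E:Dalang}: $|\xi|^{-(d-\alpha)}$ is integrable near the origin (because $\alpha>0$) and $|\xi|^{-(d-\alpha)}(\beta+|\xi|^2)^{-1}$ is integrable near infinity (because $\alpha<2$), and the scaling $\xi\mapsto\beta^{1/2}\xi$ even gives the closed form $\Upsilon(\beta)=c_\alpha\,\beta^{-(2-\alpha)/2}$. Hence Theorem~\ref{T:ExUni}(a) applies and already yields the unique random field solution together with $\Norm{u(t,x)}_p\le J_+(t,x)\,\widetilde H\!\big(t;2\lambda^2(p-1)\Gamma_t\big)$. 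Raising this to the $p$-th power, the whole task becomes the single estimate
\[
\widetilde H(t;\theta)\ \le\ C\exp\!\big(C\,\theta^{\,2/(2-\alpha)}\,t\big)\qquad (\theta>0,\ t>0)
\]
for the Riesz kernel: inserting $\theta=2\lambda^2(p-1)\Gamma_t$ gives $\E[|u(t,x)|^p]\le C^pJ_+^p(t,x)\exp\!\big(Cp\,(2\lambda^2(p-1)\Gamma_t)^{2/(2-\alpha)}t\big)$, and since $p\,(p-1)^{2/(2-\alpha)}\le p^{\,1+2/(2-\alpha)}=p^{(4-\alpha)/(2-\alpha)}$ and $\lambda^{4/(2-\alpha)}=|\lambda|^{4/(2-\alpha)}$, this is exactly \eqref{E:Riez-Mom}.

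To prove the displayed bound on $\widetilde H$ I would unwind the definition \eqref{E:H}. In the proof of Theorem~\ref{T:ExUni}(a) the chaos norm is controlled by $n!\,\Norm{f_n(\cdot,t,x)}_{\cH^{\otimes n}}^2\le J_+^2(t,x)\,(2\lambda^2\Gamma_t)^n\,\theta_n(t)$, where the prefactor $J_+^2(t,x)$ is extracted by the heat-kernel identity of Lemma~\ref{L:GG}, the temporal covariance is absorbed into $\Gamma_t^n$ (via $\int_0^t\gamma(t_i-s_i)\,\ud t_i\le\Gamma_t$, Cauchy--Schwarz in the time variables, and Plancherel in space), and $\theta_n(t)$ is the remaining ``white-noise-in-time'' iterated space integral against $\mu^{\otimes n}$; thus $\widetilde H(t;\gamma)=\sum_{n\ge0}\gamma^{n/2}\sqrt{\theta_n(t)}$. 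The only kernel-dependent ingredient in $\theta_n(t)$ is $\Upsilon$, and the standard resolvent (Laplace-transform) estimate gives $\theta_n(t)\le C^n\,\Upsilon(\beta)^n\,\e^{\beta t}$ for every $\beta>0$. The decisive feature of the Riesz kernel is that $\Upsilon(\beta)=c_\alpha\,\beta^{-(2-\alpha)/2}\to0$ as $\beta\to\infty$, so for any $\theta$ one may take $\beta=(2Cc_\alpha\,\theta)^{2/(2-\alpha)}$, making $C\theta\,\Upsilon(\beta)=\tfrac12$, and sum a geometric series,
\[
\widetilde H(t;\theta)\ \le\ \e^{\beta t/2}\sum_{n\ge0}\big(C\theta\,\Upsilon(\beta)\big)^{n/2}\ \le\ \frac{\e^{\beta t/2}}{1-2^{-1/2}}\ \le\ C\exp\!\big(C\,\theta^{\,2/(2-\alpha)}\,t\big).
\]
(Equivalently, for the Riesz kernel $\theta_n(t)$ is a Dirichlet integral of order $t^{\,n(2-\alpha)/2}\,\Gamma(\tfrac{2-\alpha}{2})^n\big/\Gamma(1+\tfrac n2(2-\alpha))$, so $\widetilde H$ is a Mittag--Leffler function of order $\tfrac{2-\alpha}{2}$, whose classical asymptotics give the same rate.)

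The substantive work sits inside the chaos estimate I invoked, and it is exactly what distinguishes the Riesz case from white noise in space: one has to reduce the coupled spatial ``chain'' $\int_{(\R^d)^n}\prod_j\mu(\ud\xi_j)$ over the time simplex to a product of one-step factors controlled by $\int_{\R^d}\e^{-a|\xi|^2}\mu(\ud\xi)\asymp a^{-\alpha/2}$ --- i.e.\ by the scaling of $\Upsilon$ --- \emph{while keeping the Laplace parameter $\beta$ free}, since the Riesz asymptotics only help once $\beta$ may be taken as large as $\theta^{2/(2-\alpha)}$; and one has to keep $J_+^2(t,x)$ factored out throughout, which is the whole point of Lemma~\ref{L:GG}. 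With these two ingredients secured, the optimisation over $\beta$, the summation of the geometric series, the passage from the $L^2$ to the $L^p$ bound using the equivalence of $L^p(\Omega)$-norms on a fixed Wiener chaos, and the final bookkeeping of constants (which depend only on $\alpha$, and $d$) are all routine.
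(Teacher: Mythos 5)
Your proposal is correct, but it reaches \eqref{E:Riez-Mom} by a genuinely different route than the paper. The paper does not pass through the general bound $\widetilde H(t;2\lambda^2(p-1)\Gamma_t)$ of Theorem \ref{T:ExUni}(a); instead it returns to the chaos expansion, evaluates the spatial integral $I_t^{(n)}$ explicitly for the Riesz kernel via \eqref{E:estim-I} and \eqref{calcul-I-t}, computes the resulting simplex integral exactly with the Beta-function identity of Lemma \ref{int-h-lemma}, recognizes the series $\sum_n \E|J_n(t,x)|^2$ (and its $L^p$ analogue, after a $\Gamma(x/2)/\sqrt{\Gamma(x)}$ trick) as a Mittag--Leffler function, and invokes the asymptotics of Lemma \ref{L:Eab} to convert it into the exponential bound. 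You instead take the ready-made estimate $\Norm{u(t,x)}_p\le J_+(t,x)\widetilde H(t;2\lambda^2(p-1)\Gamma_t)$ and prove the single new inequality $\widetilde H(t;\theta)\le C\exp(C\theta^{2/(2-\alpha)}t)$ from the Laplace-transform identity \eqref{int-hn} together with the Riesz scaling $\Upsilon(\beta)=c_{\alpha,d}\beta^{-(2-\alpha)/2}$, optimizing the free parameter $\beta\asymp\theta^{2/(2-\alpha)}$ and summing a geometric series; the exponent bookkeeping $p(p-1)^{2/(2-\alpha)}\le p^{(4-\alpha)/(2-\alpha)}$ is then identical to the paper's. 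The one step you leave implicit is the passage from the Laplace-transform bound to the pointwise bound $h_n(t)\le \e^{\beta t}\left[2\Upsilon(2\beta)\right]^n$: this needs the monotonicity of $h_n$ (stated in the paper, after \eqref{int-hn}), since for nondecreasing $h_n$ one has $h_n(t)\e^{-\beta t}/\beta\le\int_0^\infty \e^{-\beta s}h_n(s)\,\ud s=\beta^{-1}[2\Upsilon(2\beta)]^n$; with that line added your argument is complete. The trade-off is clear: your route is shorter and avoids Mittag--Leffler asymptotics entirely, needing only the scaling of $\Upsilon$, while the paper's computation yields an exact Mittag--Leffler representation of the second moment and correspondingly explicit constants (e.g.\ any constant exceeding $[C_{\alpha,d}\Gamma(1-\alpha/2)]^{2/(2-\alpha)}$ in the exponent), which is sharper information than the unspecified constant $C$ your optimization produces; both suffice for the theorem as stated, whose constant depends only on $\alpha$ and $d$.
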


\begin{theorem}
\label{T:Holder}
Let $u$ be the solution of equation \eqref{E:heat} starting from an initial measure $u_0$ that satisfies \eqref{E:InitCond}. Suppose that:
\begin{equation}
\label{C:beta-cond}
\int_{\bR^d}\left(\frac{1}{1+|\xi|^2}\right)^{\beta}\mu(\ud \xi)<\infty \quad \mbox{for some} \ \beta \in (0,1).
\end{equation}
Then for any $p \geq 2$ and $a>1$ there exists a constant $C>0$
depending on $p,a,\lambda$ and $\beta$ such that for any $(t,x),(t',x')\in K_a:=[1/a,a] \times [-a,a]^d$,
$$\|u(t,x)-u(t',x')\|_p \leq C \left(|t-t'|^{\frac{1-\beta}{2}}+|x-x'|^{1-\beta}\right).$$
Consequently, for any $a>1$, the process $\left\{u(t,x); (t,x) \in K_a\right\}$ has a modification which is a.s.
$\theta_1$-H\"older continuous in time and a.s. $\theta_2$-H\"older continuous in space,
for any $\theta_1 \in (0, (1-\beta)/2)$ and $\theta_2 \in (0,1-\beta)$.
\end{theorem}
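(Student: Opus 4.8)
The plan is to estimate the $L^p(\Omega)$-increments of $u$ through its Wiener chaos expansion $u(t,x)=\sum_{n\ge0}I_n(f_n(\cdot;t,x))$ (with $f_n$ the kernel from \eqref{E:def-fn}) and then to invoke Kolmogorov's continuity criterion. For $p\ge2$, hypercontractivity on the $n$-th chaos gives $\Norm{I_n(g)}_p\le(p-1)^{n/2}\Norm{I_n(g)}_2=(p-1)^{n/2}\sqrt{n!}\,\Norm{\widetilde g}_{\cH^{\otimes n}}\le(p-1)^{n/2}\sqrt{n!}\,\Norm{g}_{\cH^{\otimes n}}$, the last step by convexity of the norm under symmetrization. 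By the triangle inequality it suffices to bound the temporal increment $\Norm{u(t,x)-u(t',x)}_p$ for $t,t'\in[1/a,a]$, uniformly in $x\in[-a,a]^d$, and the spatial increment $\Norm{u(t,x)-u(t,x')}_p$ for $x,x'\in[-a,a]^d$, uniformly in $t\in[1/a,a]$. In each case one must show that $\sum_{n\ge1}(p-1)^{n/2}\sqrt{n!}\,\Norm{f_n(\cdot;t,x)-f_n(\cdot;t',x')}_{\cH^{\otimes n}}$ is dominated by the asserted right-hand side, with $n$-dependent constants that remain summable after multiplication by $(p-1)^{n/2}\sqrt{n!}$; this summability is of the same (entire-function) type as in Theorem~\ref{T:ExUni}. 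The chaos-zero term $|J_0(t,x)-J_0(t',x')|$ is harmless: on $K_a$ one has $t,t'\ge1/a>0$, so $G$ and its derivatives are bounded there and $\sup_{K_a}J_+<\infty$ by \eqref{E:InitCond}, whence $J_0$ is Lipschitz on $K_a$.

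For the spatial increment, order the time variables as $s_1<\dots<s_n$; then $f_n(\cdot;t,x)$ depends on $x$ only through a single heat-kernel factor, so in the spectral representation of the spatial part of the $\cH^{\otimes n}$-norm the increment contributes the factor $|\e^{-i\eta\cdot x}-\e^{-i\eta\cdot x'}|^2\le 2^{2\beta}|x-x'|^{2(1-\beta)}|\eta|^{2(1-\beta)}$ for one frequency combination $\eta$, using $|\e^{i\theta}-1|\le 2\wedge|\theta|\le 2^{\beta}|\theta|^{1-\beta}$. Since $|\eta|^{2(1-\beta)}\le(1+|\eta|^2)^{1-\beta}$, the extra power is absorbed into the corresponding one-dimensional frequency integral via $|\eta|^{2(1-\beta)}/(\kappa+|\eta|^2)\le C_\kappa(1+|\eta|^2)^{-\beta}$, which is $\mu$-integrable by \eqref{C:beta-cond}. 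Pulling the factor $C\,|x-x'|^{1-\beta}$ out front and running the rest of the computation exactly as behind \eqref{E:u-p}, the series sums to $\Norm{u(t,x)-u(t,x')}_p\le C\,|x-x'|^{1-\beta}$.

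The temporal increment is the main obstacle, because the domain of integration of $f_n$ itself varies with $t$. Take $t'=t+h$, $h>0$. Decomposing each kernel $f_n(\cdot;t+h,x)$ according to $\{s_n\le t\}$ versus $\{s_n\in(t,t+h)\}$, and using the semigroup identity $G(t+h-s_n,x-y)=\int_{\R^d}G(h,x-z)G(t-s_n,z-y)\,\ud z$, one gets $u(t+h,x)=(P_hu(t,\cdot))(x)+\sum_{n\ge1}I_n\big(f_n(\cdot;t+h,x)\mathbf{1}_{\{s_n\in(t,t+h)\}}\big)$, where $P_h$ denotes the heat semigroup. Since $(P_hu(t,\cdot))(x)-u(t,x)=\int_{\R^d}G(h,x-z)\big[u(t,z)-u(t,x)\big]\ud z$, the spatial bound just proved gives $\Norm{(P_hu(t,\cdot))(x)-u(t,x)}_p\le C\int_{\R^d}G(h,z)|z|^{1-\beta}\ud z=C\,h^{(1-\beta)/2}$. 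It remains to control the ``boundary-layer'' sum $\sum_{n\ge1}(p-1)^{n/2}\sqrt{n!}\,\Norm{f_n(\cdot;t+h,x)\mathbf{1}_{\{s_n\in(t,t+h)\}}}_{\cH^{\otimes n}}$. Here a direct Fourier estimate --- using the local integrability of $\gamma$ for the time integral over the window of length $h$ and \eqref{C:beta-cond} for the frequency integral, the essential bound in the case $\gamma$ bounded being $\int_{\R^d}\mu(\ud\eta)\int_t^{t+h}\!\!\int_t^{t+h}\e^{-(t+h-s)|\eta|^2/2}\e^{-(t+h-s')|\eta|^2/2}\,\ud s\,\ud s'\le C\int_{\R^d}\min(h,|\eta|^{-2})\,\mu(\ud\eta)\le C\,h^{1-\beta}\int_{\R^d}(1+|\eta|^2)^{-\beta}\mu(\ud\eta)$ --- yields a factor $h^{1-\beta}$ in the squared $\cH^{\otimes n}$-norm, hence $h^{(1-\beta)/2}$ in the norm, with the remaining $n$-dependence identical to that in Theorem~\ref{T:ExUni}. (Equivalently, one may estimate the kernel increment on $\{s_n\le t\}$ directly from $\e^{-(t+h-s_n)|\eta|^2/2}-\e^{-(t-s_n)|\eta|^2/2}=\e^{-(t-s_n)|\eta|^2/2}\big(1-\e^{-h|\eta|^2/2}\big)$ together with $(1-\e^{-x})^2\le1-\e^{-x}\le x^{1-\beta}$, which preserves the Gaussian decay.) Either way, $\Norm{u(t,x)-u(t+h,x)}_p\le C\,h^{(1-\beta)/2}$ uniformly over $x\in[-a,a]^d$ and $t\in[1/a,a]$.

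Combining the two estimates gives $\Norm{u(t,x)-u(t',x')}_p\le C\big(|t-t'|^{(1-\beta)/2}+|x-x'|^{1-\beta}\big)$ for every $p\ge2$, with $C=C(p,a,\lambda,\beta)$. Taking $p$-th powers, $\E\big[|u(t,x)-u(t',x')|^p\big]\le C^p\big(|t-t'|^{(1-\beta)/2}+|x-x'|^{1-\beta}\big)^p$ on $K_a$, and Kolmogorov's continuity criterion on the $(d+1)$-dimensional parameter set $K_a$ --- letting $p\to\infty$ to exhaust the available regularity --- produces a modification that is a.s.\ $\theta_1$-H\"older in time for every $\theta_1<(1-\beta)/2$ and a.s.\ $\theta_2$-H\"older in space for every $\theta_2<1-\beta$. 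The delicate step is the boundary-layer estimate in the temporal increment, where one must extract precisely the power $h^{1-\beta}$ from the squared $n$-th chaos norm while keeping the constants summable against $(p-1)^{n/2}\sqrt{n!}$; every other step closely follows the existence proof of Theorem~\ref{T:ExUni}.
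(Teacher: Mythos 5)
Your overall architecture (chaos expansion, hypercontractivity on each chaos, term-by-term increment estimates, Kolmogorov's criterion) is the same as the paper's, and your semigroup decomposition of the time increment is a legitimate alternative to the paper's direct splitting; but there are genuine gaps. The most serious one is in the spatial increment. For an initial measure satisfying only \eqref{E:InitCond}, the $x$-dependence of the kernel in the spectral representation is \emph{not} a single phase factor. The naive chain representation in which $x$ enters only through $e^{-i(\xi_1+\cdots+\xi_n)\cdot x}$ would require the Fourier transform of $J_0(s_1,\cdot)=G(s_1,\cdot)*u_0$ as a function of the frequency, which does not exist as a function when $u_0$ is an unbounded measure such as $|x|^2\,\ud x$; this is exactly why the paper rewrites the kernel via Lemma \ref{L:GG} and arrives at Lemma \ref{L:Fourier-fn}, where $x$ appears both in the phase $\exp\{-\tfrac{i}{t}(\sum_j t_j\xi_j)\cdot x\}$ \emph{and} inside $\int e^{-i[\cdots]\cdot x_0}G(t,x-x_0)\,u_0(\ud x_0)$. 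Your argument bounds only the phase contribution (the paper's term $C_n^{(1)}$); the second contribution (the paper's $C_n^{(2)}$) produces the factor $F(t,x,z)=\int|G(t,x+z-x_0)-G(t,x-x_0)|\,|u_0|(\ud x_0)$ and is controlled by the inequality \eqref{E:GG-x}, $|G(t,x)-G(t,y)|\le t^{-\theta/2}[G(2t,x)+G(2t,y)]|x-y|^{\theta}$. Omitting it is not a technicality: it is precisely where the rough initial condition enters the spatial regularity estimate.

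The time increment also has gaps. First, your bound $\|P_hu(t,\cdot)(x)-u(t,x)\|_p\le C\int G(h,z)|z|^{1-\beta}\ud z$ applies the spatial H\"older estimate at the points $x-z$ for \emph{all} $z\in\R^d$, whereas that estimate (even once repaired) carries a constant proportional to $\sup J_+$ over the region visited, which for measure-valued $u_0$ grows without bound off $K_a$; you would have to track the spatial growth of the constant (e.g.\ through $J_+$, using $\int G(h,z)J_+(t,x-z)\ud z=J_+(t+h,x)$) before integrating against $G(h,\cdot)$, and as written this uniformity is assumed, not proved. Second, the boundary-layer estimate is argued only for bounded $\gamma$, but $\gamma$ is merely locally integrable and typically blows up at $0$; one needs the Cauchy--Schwarz/$\Gamma_t$ reduction together with the decoupling of the frequencies $|\sum_{j\le k}t_j\xi_j|$ via the maximum principle (Lemma \ref{max-principle-lemma}), after which the resulting singular time integral is finite exactly because \eqref{C:beta-cond} is equivalent to $\int_0^1 k(s)s^{\beta-1}\ud s<\infty$ (Lemma \ref{L:EquvHoldCond}) --- the key analytic lemma your sketch never identifies; asserting that "a factor $h^{1-\beta}$ emerges" is the whole difficulty, not a remark. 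With these points supplied your route would work, and the semigroup splitting is a nice alternative to the paper's decomposition into the terms $A_n,B_n,A_n',B_n'$; as written, however, the spatial estimate is incomplete and the temporal one rests on an unproved uniformity and an unproved boundary-layer bound for unbounded $\gamma$.
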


\begin{remark}\normalfont
(a) The proof of Theorem \ref{T:ExUni}.(a) shows that, for any $p \geq 2$ and $T>0$,
\begin{equation}
\label{E:p-mom}
\sup_{(t,x)\in [0,T] \times \bR^d}\|u(t,x)\|_p \leq C_{\lambda,p,T} \sup_{(t,x) \in [0,T] \times \bR^d}J_+(t,x)
\:\le +\infty,
\end{equation}
where $C_{\lambda,p,T}>0$ is a constant which depends on $\lambda$, $p$ and $T$.
If $u_0(\ud x)=u_0(x)\ud x$ and $u_0$ is bounded,
then $\sup_{(t,x) \in [0,T] \times \bR^d}J_+(t,x)<\infty$.
But there are many examples of measures $u_0$, such as $u_0=\delta_0$ and $u_0(\ud x)=|x|^2 \ud x$, for which
$\sup_{(t,x) \in [0,T] \times \bR^d}J_+(t,x)=\infty$.

(b) When $u_0(dx)=a dx$ for some $a>0$ and $\gamma_t=H(2H-1)|t|^{2H-2}$ for $H \in (\frac{1}{2},1)$, the upper bound given by Theorem \ref{T:Riesz} coincides with the one of Proposition 8.1.(b) of \cite{balan-conus16}.

(c) In the case of the white noise in time, the H\"older regularity of the solution of the heat equation with initial condition given by $u_0(\ud x)=u_0(x)\ud x$ (with $u_0$ a bounded and H\"older continuous function) was obtained in \cite{SanzSoleSarra02}
under the same condition \eqref{C:beta-cond} and with the same exponents as in Theorem \ref{T:Holder}. This result has been recently extended in  \cite{chen-huang16} to the case of initial data given by a signed measure $u_0$ satisfying \eqref{E:InitCond}. In \cite{ChenDalang13Holder}, it was shown that
the solution of the heat equation with space-time white noise and initial data satisfying \eqref{E:InitCond} has a modification which is $\theta_1$-H\"older continuous in time and $\theta_2$-H\"older continuous in space, for any $\theta_1 \in (0,\frac{1}{4})$ and $\theta_2 \in (0,\frac{1}{2})$. This is consistent with the conclusion of Theorem \ref{T:Holder}, since for the space-time white noise, $d=1$, $\mu(d\xi)=\ud \xi$, and condition \eqref{C:beta-cond} holds for $\beta=1/2+\varepsilon$ with $\varepsilon>0$ arbitrary.

(d) Finding a nontrivial lower bound for the second moment of the solution to equation \eqref{E:heat} when the initial condition is the Dirac delta
measure is an extremely challenging problem. We postpone this for future work. When the noise is white in time, a nontrivial lower bound has been recently obtained in \cite{CK15Color}.
\end{remark}


We conclude the introduction with few words about the organization of the article and the notation. In Section \ref{S:Back}, we introduce the background material necessary for the rigorous formulation of the problem.
Section \ref{S:Exist} is dedicated to the the proof of Theorem \ref{T:ExUni}, while Theorems \ref{T:Riesz} and \ref{T:Holder} are proved in Sections \ref{S:Riesz} and \ref{S:Holder}, respectively. The appendix contains the proofs of some technical results.

Throughout this article, we denote by $\cD(\R^d)$ the set of $C^{\infty}$-functions with compact support in $\bR^d$, and $\cS(\R^d)$ the set of Schwartz test functions on $\bR^d$, i.e. $C^{\infty}$-functions with rapid decrease at infinity along with all partial derivatives. We let $\cS_{\bC}(\R^d)$ be the set of $\bC$-valued Schwartz test functions on $\bR^d$, and $\cS_{\bC}'(\R^d)$ be its dual space. We denote by
$L_{\bC}^1(\bR^d)$ the space of $\bC$-valued integrable functions on $\bR^d$.
The Fourier transform of a function $\varphi \in L_{\bC}^1(\bR^d)$ is defined by:
$$\cF \varphi(\xi)=\int_{\bR^d}e^{-i \xi \cdot x}\varphi(x)\ud x, \quad \xi \in \bR^d.$$
We say that a measure $\mu$ on $\bR^d$ is tempered if
$$\int_{\bR^d}\left(\frac{1}{1+|\xi|^2} \right)^k \mu (\ud \xi)<\infty \quad \mbox{for some} \ k>0.$$

\section{Background}
\label{S:Back}

In this section, we introduce the definitions of the noise and solution, review some basic facts of Malliavin calculus, and give some preliminary results related to the existence of the solution, with emphasis on the Wiener chaos expansion of the solution.

We begin by recalling the definition of the noise from \cite{balan-song16}.
We assume that $W=\{W(\varphi):\:\varphi \in \cD(\bR \times \bR^d)\}$ is a zero-mean Gaussian process, defined on a probability space $(\Omega,\cF,\bP)$, with covariance
\[
\E[W(\varphi_1)W(\varphi_2)]=\int_{\bR^2 \times \bR^{2d}}\gamma(t-s)f(x-y)\varphi_1(t,x)\varphi_2(s,y)\ud x \ud y \ud t \ud s=:J(\varphi_1,\varphi_2),
\]
where $\gamma:\bR \to [0,\infty]$ and $f:\bR^d \to [0,\infty]$ are continuous, symmetric, locally integrable functions, such that
$$\gamma(t) <\infty \quad \mbox{if and only if} \quad t\not=0,$$
$$f(x) <\infty \quad \mbox{if and only if} \quad x\not=0.$$
We denote by $\cH$  the completion of $\cD(\bR \times \bR^d)$ with respect to $\langle \cdot,\cdot\rangle_{\cH}$ defined by
$$\langle \varphi_1,\varphi_2\rangle_{\cH}=J(\varphi_1,\varphi_2).$$
We are mostly interested in variables $W(\varphi)$ with $\varphi \in \cD(\bR_{+}\times \bR^d)$.

We assume that the functions $\gamma$ and $f$ are \emph{non-negative definite} (in the sense of distributions), i.e. for any $\phi \in \cS(\bR)$ and
$\varphi \in \cS(\bR^d)$,
$$\int_{\bR}(\phi*\widetilde{\phi})(t)\gamma(t)\ud t \geq 0 \quad \mbox{and} \quad \int_{\bR^d}(\varphi*\widetilde{\varphi})(x)f(x)\ud x \geq 0.$$

By the Bochner-Schwartz Theorem, there exists a tempered measure $\nu$ on $\bR$ such that $\gamma$ is the Fourier transform of $\nu$ in $\cS_{\bC}'(\bR)$, i.e.
$$\int_{\bR^d}\phi(t)\gamma(t)\ud t=\frac{1}{2\pi}\int_{\bR}\cF \phi(\tau)\nu(\ud \tau) \quad \mbox{for all} \quad \phi \in \cS_{\bC}(\bR).$$
Similarly, there exists a tempered measure $\mu$ on $\bR^d$ such that $f$ is the Fourier transform of $\mu$ in $\cS_{\bC}'(\bR^d)$, i.e.
\begin{equation}
\label{E:f-Fourier-mu}
\int_{\bR^d}\varphi(x)f(x)\ud x=\frac{1}{(2\pi)^d}\int_{\bR^d}\cF \varphi(\xi)\mu(\ud\xi) \quad \mbox{for all} \quad \varphi \in \cS_{\bC}(\bR^d).
\end{equation}

It follows that for any functions $\phi_1,\phi_2 \in \cS_{\bC}(\bR)$ and  $\varphi_1,\varphi_2 \in \cS_{\bC}(\bR^d)$
\begin{equation}
\label{E:Parseval-t}\int_{\bR} \int_{\bR}\gamma(t-s)\phi_1(t)\overline{\phi_2(s)}\ud t \ud s=\frac{1}{2\pi}\int_{\bR}\cF \phi_1(\tau)\overline{\cF \phi_2(\tau)}\nu(\ud \tau)
\end{equation}
and
\begin{equation}
\label{E:Parseval-x}
\int_{\bR^d} \int_{\bR^d}f(x-y)\varphi_1(x)\overline{\varphi_2(y)}\ud x\ud y=\frac{1}{2\pi}\int_{\bR^d}\cF \varphi_1(\xi)\overline{\cF \varphi_2(\xi)}\mu(\ud\xi).
\end{equation}

The next result shows that the functional $J$ is non-negative definite.

\begin{lemma}[Lemma 2.1 of \cite{balan-song16}]
For any $\varphi_1, \varphi_2 \in \cD(\bR \times \bR^d)$, we have:
\begin{equation}
\label{alt-expres-J}
J(\varphi_1,\varphi_2)=\frac{1}{(2\pi)^{d+1}}\int_{\bR^{d+1}}
\cF \varphi_1(\tau,\xi)\overline{\cF \varphi_2(\tau,\xi)}\nu(\ud \tau)\mu(\ud\xi),
\end{equation}
where $\cF$ denotes the Fourier transform in both variables $t$ and $x$. In particular, $J$ is non-negative definite.
\end{lemma}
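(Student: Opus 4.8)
The plan is to derive the spectral representation \eqref{alt-expres-J} by applying the two Parseval identities \eqref{E:Parseval-x} and \eqref{E:Parseval-t} in succession --- first integrating out the space variables, then the time variables --- and using Fubini's theorem to rearrange the orders of integration. Fix $\varphi_1,\varphi_2\in\cD(\bR\times\bR^d)$, say supported in $[-T,T]\times[-R,R]^d$. Since $\gamma$ and $f$ are non-negative and locally integrable, the defining quadruple integral for $J(\varphi_1,\varphi_2)$ converges absolutely, and Fubini lets us write it as $\int_{\bR^2}\gamma(t-s)\bigl(\int_{\bR^{2d}}f(x-y)\varphi_1(t,x)\varphi_2(s,y)\,\ud x\,\ud y\bigr)\ud t\,\ud s$. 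For each fixed $(t,s)$ the slices $\varphi_1(t,\cdot),\varphi_2(s,\cdot)$ lie in $\cS_\bC(\bR^d)$ and are real, so \eqref{E:Parseval-x} replaces the inner integral by $(2\pi)^{-d}\int_{\bR^d}\cF_x\varphi_1(t,\xi)\overline{\cF_x\varphi_2(s,\xi)}\,\mu(\ud\xi)$, where $\cF_x$ denotes Fourier transform in the space variable only.

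Next I would push the $\mu$-integration to the outside: writing $\phi_{1,\xi}(t):=\cF_x\varphi_1(t,\xi)$ and $\phi_{2,\xi}(s):=\cF_x\varphi_2(s,\xi)$, these are $C^\infty$ functions of $t$, resp. $s$, supported in $[-T,T]$, hence in $\cS_\bC(\bR)$, so \eqref{E:Parseval-t} turns the $(t,s)$-integral of $\gamma(t-s)\phi_{1,\xi}(t)\overline{\phi_{2,\xi}(s)}$ into $(2\pi)^{-1}\int_{\bR}\cF\varphi_1(\tau,\xi)\overline{\cF\varphi_2(\tau,\xi)}\,\nu(\ud\tau)$ (using $\cF_t\cF_x=\cF$). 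Combining, $J(\varphi_1,\varphi_2)=(2\pi)^{-(d+1)}\int_{\bR^d}\bigl(\int_{\bR}\cF\varphi_1(\tau,\xi)\overline{\cF\varphi_2(\tau,\xi)}\,\nu(\ud\tau)\bigr)\mu(\ud\xi)$, and a last Fubini step merges the iterated integral into the single integral over $\bR^{d+1}$ asserted in \eqref{alt-expres-J}.

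The step I expect to require genuine care is the repeated invocation of Fubini's theorem, since $\nu$ and $\mu$ are only tempered (hence possibly infinite) measures and $\gamma,f$ may be $+\infty$ on negligible sets. The point is that $\varphi_i(t,\cdot)$ being a fixed Schwartz function and $\mu$ being tempered give a bound $\int_{\bR^d}|\cF_x\varphi_1(t,\xi)|\,|\cF_x\varphi_2(s,\xi)|\,\mu(\ud\xi)\le C$ uniformly over $(t,s)\in[-T,T]^2$, while $\int_{[-T,T]^2}\gamma(t-s)\,\ud t\,\ud s<\infty$; this makes the first interchange legitimate. For the final interchange one uses that $\cF\varphi_1\,\overline{\cF\varphi_2}$ is a Schwartz function on $\bR^{1+d}$ and that $\nu\otimes\mu$ is a tempered measure on $\bR^{1+d}$ (which follows from $\nu,\mu$ being tempered on $\bR$ and $\bR^d$), so the product is $\nu\otimes\mu$-integrable. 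Everything else is bookkeeping with Fourier transforms.

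Finally, the non-negative definiteness of $J$ is a direct consequence of \eqref{alt-expres-J}: extending $J$ bilinearly, for any $\varphi_1,\dots,\varphi_n\in\cD(\bR\times\bR^d)$ and $z_1,\dots,z_n\in\bC$ one gets $\sum_{j,k}z_j\overline{z_k}\,J(\varphi_j,\varphi_k)=(2\pi)^{-(d+1)}\int_{\bR^{d+1}}\bigl|\sum_j z_j\,\cF\varphi_j(\tau,\xi)\bigr|^2\,\nu(\ud\tau)\mu(\ud\xi)\ge 0$, since $\nu,\mu\ge 0$; in particular $J(\varphi,\varphi)\ge 0$. An alternative, equivalent route avoids iterating Parseval: write $J(\varphi_1,\varphi_2)=\langle\gamma\otimes f,\ \varphi_1*\widetilde\varphi_2\rangle$ with $\widetilde\varphi_2(t,x)=\varphi_2(-t,-x)$ and convolution in $\bR^{1+d}$, note $\widehat{\gamma\otimes f}=\nu\otimes\mu$ by Bochner--Schwartz and the tensor-product property of $\cF$, and apply the Parseval formula for tempered distributions together with $\cF(\varphi_1*\widetilde\varphi_2)=\cF\varphi_1\cdot\overline{\cF\varphi_2}$; this packages the Fubini issues into the distributional pairing but requires the same verifications when one unwinds it.
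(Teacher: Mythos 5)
Your argument is correct and follows essentially the route the paper intends (the lemma is quoted from Lemma 2.1 of \cite{balan-song16}, whose proof is exactly this successive application of the spatial and temporal Parseval relations \eqref{E:Parseval-x} and \eqref{E:Parseval-t}, with Fubini justified by the local integrability of $\gamma$ and $f$, the rapid decay of the partial Fourier transforms of the compactly supported test functions, and the temperedness of $\mu$ and $\nu$), and your spectral formula immediately yields the non-negative definiteness as stated. The only remark worth making is that you correctly use the constant $(2\pi)^{-d}$ in the spatial Parseval step, where the displayed \eqref{E:Parseval-x} carries a typographical $1/(2\pi)$.
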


\bigskip

At this point, we need to introduce some basic facts from Malliavin calculus, which are necessary for defining the solution to equation \eqref{E:heat}.
We refer the reader to \cite{nualart06} for more details.
It is known that every square-integrable random variable $F$ which is measurable with respect to $W$,
has the Wiener chaos expansion:
$$F=\E(F)+\sum_{n \geq 1}F_n \quad \mbox{with} \quad F_n \in \cH_n,$$
where $\cH_n$ is the $n$-th Wiener chaos space associated to $W$. Moreover, each $F_n$ can be represented as $F_n=I_n(f_n)$ for some $f_n \in \cH^{\otimes n}$, where $\cH^{\otimes n}$ is the $n$-th tensor product of $\cH$ and
$I_n:\cH^{\otimes n} \to \cH_n$ is the multiple Wiener integral with respect to $W$.
By the orthogonality of the Wiener chaos spaces and an isometry-type property of $I_n$, we obtain that
$$\E(|F|^2)=(\E F)^2+\sum_{n \geq 1}\E(|I_n(f_n)|^2)=(\E F)^2+\sum_{n \geq 1}n! \|\widetilde{f}_n\|_{\cH^{\otimes n}}^{2},$$
where $\widetilde{f}_{n}$ is the symmetrization of $f_n$ in all $n$ variables:
$$\widetilde{f}_n(t_1,x_1,\ldots,t_n,x_n)=\frac{1}{n!}\sum_{\rho \in S_n}f_n(t_{\rho(1)},x_{\rho(1)},\ldots,t_{\rho(n)},x_{\rho(n)}).$$
Here $S_n$ is the set of all permutations of $\{1, \ldots,n\}$. We note that
the space $\cH^{\otimes n}$ may contain distributions in $\cS'(\bR^{n(d+1)})$.

We denote by $\delta: {\rm Dom}(\delta) \subset L^2(\Omega;\cH) \to L^2(\Omega)$ the divergence operator with respect to $W$, defined as the adjoint of the Malliavin derivative  $D$ with respect to $W$.
If $u \in \mbox{Dom} \ \delta$, we use the notation
$$\delta(u)=\int_0^{\infty} \int_{\bR^d}u(t,x) W(\delta t, \delta x),$$
and we say that
$\delta(u)$ is the {\em Skorohod integral} of $u$ with respect to $W$. In particular, $E[\delta(u)]=0$.

\bigskip
We are now ready to give the definition of the solution to equation \eqref{E:heat}.

\begin{definition}
\label{D:Sol}
{\rm We say that a process $u=\{u(t,x);t \geq 0, x \in \bR^d\}$ is a {\bf (mild) solution} of equation \eqref{E:heat} if for any $t>0$ and $x \in \bR^d$, $u(t,x)$ is $\cF_t$-measurable, $E|u(t,x)|^2<\infty$ and the following integral equation holds:
\begin{equation}
\label{def-sol}
u(t,x) = J_0(t,x) + \int_{0}^{t}\int_{\bR^d}G(t-s,x-y)u(s,y)W(\delta s,\delta y),
\end{equation}
i.e. $v^{(t,x)} \in {\rm Dom} \ \delta$ and $u(t,x)=J_0(t,x)+\delta(v^{(t,x)})$ for all $(t,x) \in \bR_{+} \times \bR^d$, where
\begin{equation}
\label{def-v}
v^{(t,x)}(s,y)=1_{[0,t]}(s)G(t-s,x-y)u(s,y), \quad s \geq 0,\: y \in \bR^d.
\end{equation}
}
\end{definition}

We now state (without proof) a well-known criterion for the existence and uniqueness of this solution, expressed as the convergence in $L^2(\Omega)$ of a series of multiple integrals. This result is essentially due to \cite{hu-nualart09} (for a slightly different noise than here). In its present form, it is similar to Theorem 2.9 of \cite{balan-song16} (for the wave equation). We define the kernel function $f_n(\cdot,t,x)$ by:
\begin{align}
\nonumber
f_n(t_1,x_1,\ldots,t_n,x_n,t,x)=&
\quad \lambda^n G(t-t_n,x-x_n)\ldots G(t_2-t_1,x_2-x_1)\\
\label{E:def-fn}
&\times J_0(t_1,x_1)1_{\{0<t_1<\ldots<t_n<t\}}.
\end{align}

\begin{theorem}
\label{exist-sol-th}
Suppose that $f_n(\cdot,t,x)\in \cH^{\otimes n}$ for any $t>0,x \in \bR^d$ and $n \geq 1$. Then equation \eqref{E:heat} has a solution if and only if for any $t>0$ and $x \in \bR^d$,
$$\mbox{the series $\sum_{n \geq 0}I_n(f_n(\cdot,t,x))$ converges in $L^2(\Omega)$}.$$ In this case, the solution is unique and is given by:
$$u(t,x)=\sum_{n \geq 0}J_n(t,x), \quad with \quad J_n(t,x)=I_n(f_n(\cdot,t,x)).$$
\end{theorem}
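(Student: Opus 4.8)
The plan is to reduce the statement to two standard facts from Malliavin calculus (see \cite{nualart06}): uniqueness of the Wiener chaos expansion of a square-integrable variable, and the divergence rule for a chaos-expanded process --- if $v=\sum_{n\ge 0}I_n(h_n)$ with $h_n\in\cH^{\otimes(n+1)}$ symmetric in its first $n$ arguments (the last being the $\cH$-argument of $v$), then $v\in\mathrm{Dom}\,\delta$ iff $\sum_{n\ge 0}(n+1)!\,\|\widetilde h_n\|_{\cH^{\otimes(n+1)}}^2<\infty$, and then $\delta(v)=\sum_{n\ge 0}I_{n+1}(\widetilde h_n)$, the tilde denoting symmetrization in all $n+1$ variables. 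The key structural input is the recursion contained in \eqref{E:def-fn}: for every $n\ge 0$,
\[
f_{n+1}(t_1,x_1,\ldots,t_n,x_n,s,y,t,x)=\lambda\,\mathbf 1_{\{s<t\}}\,G(t-s,x-y)\,f_n(t_1,x_1,\ldots,t_n,x_n,s,y),
\]
so appending one Green factor to $f_n$, with the new time in the last inner slot, produces $f_{n+1}$; since that factor depends only on $(s,y)$, the full symmetrization over the $n+1$ pairs of $\lambda\,\mathbf 1_{(0,t)}(s)G(t-s,x-y)\,\widetilde f_n(t_1,x_1,\ldots,t_n,x_n,s,y)$ equals $\widetilde f_{n+1}(\cdot,t,x)$.

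First I would prove necessity and uniqueness together. Let $u$ be a solution and fix $(t,x)$. Since $u(t,x)\in L^2(\Omega)$ is $\cF_t$-measurable, it has an expansion $u(t,x)=\sum_{n\ge 0}I_n(g_n(\cdot,t,x))$ with symmetric $g_n(\cdot,t,x)\in\cH^{\otimes n}$. Feeding the expansion of $u(s,y)$ into the Skorohod integrand $v^{(t,x)}$ of \eqref{def-v} --- which multiplies $u(s,y)$ by $\lambda\,\mathbf 1_{(0,t)}(s)G(t-s,x-y)$ --- the divergence rule gives $\delta(v^{(t,x)})=\sum_{n\ge 0}I_{n+1}\big(\widetilde h_n(\cdot,t,x)\big)$ with $h_n(\cdot,t,x)$ equal to $g_n(\cdot,s,y)$ times that factor. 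Matching chaos components on the two sides of \eqref{def-sol} and invoking uniqueness gives $g_0(t,x)=J_0(t,x)$ and, for $n\ge 1$, that $g_n(\cdot,t,x)$ is the symmetrization of $\lambda\,\mathbf 1_{(0,t)}(t_n)G(t-t_n,x-x_n)g_{n-1}(t_1,x_1,\ldots,t_{n-1},x_{n-1},t_n,x_n)$. Since $f_n$ obeys the same recursion, an induction --- symmetrizing a kernel already symmetric in its first $n-1$ variables, times a factor depending only on the last, yields the full symmetrization --- shows $g_n(\cdot,t,x)=\widetilde f_n(\cdot,t,x)$ for all $n$. Hence $u(t,x)=\sum_{n\ge 0}I_n(\widetilde f_n(\cdot,t,x))=\sum_{n\ge 0}I_n(f_n(\cdot,t,x))$, which forces uniqueness and, as $u(t,x)\in L^2(\Omega)$, forces this series to converge in $L^2(\Omega)$.

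Conversely, assume $\sum_{n\ge 0}I_n(f_n(\cdot,t,x))$ converges in $L^2(\Omega)$ --- equivalently $\sum_{n\ge 0}n!\,\|\widetilde f_n(\cdot,t,x)\|_{\cH^{\otimes n}}^2<\infty$ --- for every $(t,x)$, and set $u(t,x):=\sum_{n\ge 0}J_n(t,x)$. Because $f_n(\cdot,t,x)$ is supported in the time variables on $\{0<t_1<\cdots<t_n<t\}$, each multiple integral $J_n(t,x)$ is $\cF_t$-measurable, hence so is the $L^2$-limit $u(t,x)$, and $\E|u(t,x)|^2<\infty$. To verify \eqref{def-sol}, plug this $u$ into $v^{(t,x)}$: its $n$-th chaos kernel is $\lambda\,\mathbf 1_{(0,t)}(s)G(t-s,x-y)f_n(\cdot,s,y)$, with full symmetrization $\widetilde f_{n+1}(\cdot,t,x)$ by the recursion above; the divergence criterion $\sum_{n\ge 0}(n+1)!\,\|\widetilde f_{n+1}(\cdot,t,x)\|_{\cH^{\otimes(n+1)}}^2=\sum_{m\ge 1}m!\,\|\widetilde f_m(\cdot,t,x)\|_{\cH^{\otimes m}}^2<\infty$ holds by hypothesis, so $v^{(t,x)}\in\mathrm{Dom}\,\delta$ and $\delta(v^{(t,x)})=\sum_{n\ge 0}I_{n+1}(\widetilde f_{n+1}(\cdot,t,x))=\sum_{m\ge 1}J_m(t,x)=u(t,x)-J_0(t,x)$, i.e.\ \eqref{def-sol} holds.

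The step I expect to require the most care is confirming that $v^{(t,x)}$ genuinely lies in $\mathrm{Dom}\,\delta\subset L^2(\Omega;\cH)$ --- in particular that $\E\,\|v^{(t,x)}\|_{\cH}^2<\infty$, which is not literally the convergence hypothesis, since the latter controls only the fully symmetrized norms whereas $\E\,\|v^{(t,x)}\|_{\cH}^2$ involves symmetrization in the first $n$ variables only. Here one exploits the non-negativity of $G$ and, after replacing $J_0$ by $J_+$ (recall $|J_0|\le J_+$), of all the kernels: for a non-negative kernel $h_n$ symmetric in its first $n$ arguments one has $\|h_n\|_{\cH^{\otimes(n+1)}}^2\le (n+1)\,\|\widetilde h_n\|_{\cH^{\otimes(n+1)}}^2$ (the cross terms $\langle h_n,h_n\circ\tau\rangle_{\cH^{\otimes(n+1)}}$ being non-negative), and summing over $n$ bounds $\E\,\|v^{(t,x)}\|_{\cH}^2$ by the finite divergence quantity --- for signed $u_0$ this needs the companion series with $J_+$ to converge, which is ensured in the applications (Theorems \ref{T:ExUni} and \ref{T:Riesz}), or else one works with the extended domain of $\delta$ of Hu--Nualart. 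The remaining points --- termwise application of $\delta$ to the partial sums (via closability of $\delta$), the $\cF_t$-measurability of multiple integrals of kernels with time support in $[0,t]^n$, and the bookkeeping of symmetrizations --- are routine; the argument parallels the proof of Theorem 2.9 in \cite{balan-song16} and the analysis of \cite{hu-nualart09}.
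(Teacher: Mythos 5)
Your argument is correct and follows essentially the same route as the proof the paper itself omits (Theorem \ref{exist-sol-th} is stated without proof, with reference to \cite{hu-nualart09} and Theorem 2.9 of \cite{balan-song16}): matching Wiener chaos components via the divergence rule together with the kernel recursion is exactly the argument of those sources. You also correctly isolate the one delicate point --- membership of $v^{(t,x)}$ in $L^2(\Omega;\cH)$, which the symmetrized-norm hypothesis alone does not literally give for signed $u_0$ --- and your remedies (the non-negativity argument with $J_+$, whose companion series is indeed shown to converge in Section \ref{S:Exist}, or the extended domain of $\delta$) are the devices used in the cited literature.
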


To show that the kernel $f_n(\cdot,t,x)$ is in $\cH^{\otimes n}$ we need an alternative expression of this kernel, which is obtained as follows.
Suppose that $0<t_1<\ldots<t_n<t$. Using the definition of $J_0(t_1,x_1)$, we see that
$$f_n(t_1,x_1, \ldots,t_n,x_n,t,x)=\lambda^n \int_{\bR^d} G(t-t_n,x-x_n)\ldots
G(t_2-t_1,x_2-x_1)G(t_1,x_1-x_0)u_0(\ud x_0).$$

The key idea (and the starting point of our developments) is to express the product $G(t_2-t_1,x_2-x_1)G(t_1,x_1-x_0)$ above using the following result, whose proof is based on the specific form of the heat kernel $G$.

\begin{lemma}[Lemma 5.4 of \cite{ChenDalang13Heat}]
\label{L:GG}
For $t,\: s>0$ and $x,\: y\in\bR^d$,
\[
G(t,x)G(s,y)=G\left(\frac{ts}{t+s},\frac{sx+ty}{t+s}\right)G(t+s,x-y).
\]
\end{lemma}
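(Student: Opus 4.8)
The plan is to prove this purely from the explicit Gaussian form of $G$, checking separately that the two sides have the same normalizing constant and the same exponent. Write $G(t,x)=(2\pi t)^{-d/2}\exp(-|x|^2/(2t))$, and abbreviate $u:=ts/(t+s)$ and $v:=t+s$. The left-hand side carries the prefactor $(2\pi)^{-d}(ts)^{-d/2}$, while the right-hand side carries $(2\pi)^{-d}(uv)^{-d/2}$; since $uv=ts$, the prefactors coincide. It therefore remains to verify the identity of exponents
\[
\frac{|x|^2}{t}+\frac{|y|^2}{s}=\frac{1}{u}\left|\frac{sx+ty}{t+s}\right|^2+\frac{|x-y|^2}{v}=\frac{|sx+ty|^2}{ts(t+s)}+\frac{|x-y|^2}{t+s}.
\]

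Next I would expand the two squared norms on the right, $|sx+ty|^2=s^2|x|^2+2st\,x\cdot y+t^2|y|^2$ and $|x-y|^2=|x|^2-2\,x\cdot y+|y|^2$, and collect terms. The cross terms contribute $\tfrac{2}{t+s}\,x\cdot y$ from the first fraction and $-\tfrac{2}{t+s}\,x\cdot y$ from the second, so they cancel. The coefficient of $|x|^2$ is $\tfrac{s}{t(t+s)}+\tfrac{1}{t+s}=\tfrac{s+t}{t(t+s)}=\tfrac1t$, and, by the symmetry $x\leftrightarrow y$, $t\leftrightarrow s$, the coefficient of $|y|^2$ is $\tfrac1s$. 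This is exactly the left-hand side, so the identity holds for all $t,s>0$ and $x,y\in\bR^d$.

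There is no genuine obstacle here; the only thing to watch is the elementary bookkeeping of the cross terms and of the common denominators $t(t+s)$ and $s(t+s)$ when simplifying the quadratic coefficients. As a sanity check (and an alternative proof), one may note the probabilistic meaning: if $X\sim N(0,tI_d)$ and $Y\sim N(0,sI_d)$ are independent, then the pair $\bigl(\tfrac{sX+tY}{t+s},\,X-Y\bigr)$ is again jointly Gaussian with uncorrelated — hence independent — components, of covariances $\tfrac{ts}{t+s}I_d$ and $(t+s)I_d$, and the linear change of variables $(x,y)\mapsto\bigl(\tfrac{sx+ty}{t+s},x-y\bigr)$ has Jacobian determinant $\pm1$; writing the joint density of $(X,Y)$ in the new coordinates reproduces the claimed factorization. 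This is precisely the identity that makes the subsequent reduction of $f_n(\cdot,t,x)$ special to the heat equation.
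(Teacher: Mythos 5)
Your verification is correct: the prefactors match because $\frac{ts}{t+s}\cdot(t+s)=ts$, and your expansion of the exponents (cross terms cancelling, quadratic coefficients collapsing to $1/t$ and $1/s$) is exactly the elementary computation based on the explicit form of $G$ that the cited Lemma 5.4 of \cite{ChenDalang13Heat} rests on; the present paper itself gives no proof, only the citation, so there is nothing further to compare. The probabilistic reformulation via the independent pair $\bigl(\frac{sX+tY}{t+s},X-Y\bigr)$ is a nice additional check but not needed.
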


Consequently, we obtain that
\begin{align*}
f_n(t_1,x_1, \ldots,t_n,x_n,t,x)=\lambda^n &\int_{\bR^d}  G(t-t_n,x-x_n)\ldots G(t_3-t_2,x_3-x_2)G(t_2,x_2-x_0)\\
& \times G\left(\left(1-\frac{t_1}{t_{2}}\right)t_1,
\left(1-\frac{t_1}{t_{2}}\right)x_0+\frac{t_1}{t_{2}}x_{2}-x_1\right)u_0(\ud x_0).
\end{align*}
We now express the product $G(t_3-t_2,x_3-x_2)G(t_2,x_2-x_0)$ using Lemma \ref{L:GG}, and we continue in this manner. After $n$ steps, letting $t_{n+1}=t$, we obtain that:
\begin{align}
\begin{aligned}
f_n(t_1,x_1, \ldots,t_n,x_n,t,x)&=
\lambda^n \int_{\bR^d} u_0(\ud x_0) \: G(t,x-x_0) \\
&\hspace{-1em}\times \prod_{j=1}^{n}G\left(\left(1-\frac{t_j}{t_{j+1}}\right)t_j,
\left(1-\frac{t_j}{t_{j+1}}\right)x_0+\frac{t_j}{t_{j+1}}x_{j+1}-x_j\right).
\label{E:alt-fn}
\end{aligned}
\end{align}

Using the fact that $\int_{\bR^d}G(t,x-y)\ud y=1$ for any $x \in \bR^d$, we see that
\[
\left|\int_{\bR^{nd}}f_n(t_1,x_1, \ldots,t_n,x_n,t,x) \ud x_1 \ldots \ud x_n\right|
\le \lambda^n \int_{\bR^d}G(t,x-x_0)|u_0|(\ud x_0)=\lambda^n J_+(t,x)<\infty.
\]
This shows that the function $f_n(t_1,\cdot,\ldots,t_n,\cdot,t,x)$ is in $L^1(\bR^{nd})$. The next result gives the Fourier transform of this function. For this, we need to recall that:
\begin{equation}
\label{E:Fourier-G}
\cF G(t,\cdot)(\xi)=\exp\left(-\frac{t|\xi|^2}{2}\right) \quad \mbox{for all} \ t>0,\: \xi \in \bR^d.
\end{equation}

\begin{lemma}
\label{L:Fourier-fn}
For any $0<t_1<\ldots<t_n<t=t_{n+1}$ and for any $\xi_1, \ldots,\xi_n \in \bR^d$, we have
\begin{align*}
\cF f_n(t_1,\cdot,\ldots,t_n,\cdot,t,x)&(\xi_1,\ldots,\xi_n)\\
=&\lambda^n \prod_{k=1}^{n} \exp\left\{-\frac{1}{2}
\frac{t_{k+1}-t_k}{t_k t_{k+1}} \left|\sum_{j=1}^k t_j \xi_j\right|^2 \right\}\:
\exp\left\{- \frac{i}{t}\left(\sum_{j=1}^{n}t_j \xi_j\right)\cdot x \right\} \\
 & \times \int_{\bR^d} \exp\left\{-i \left[ \sum_{j=1}^{n}\left(1-\frac{t_j}{t}\right)\xi_j \right]\cdot x_0 \right\}G(t,x-x_0)u_0(\ud x_0).
\end{align*}
\end{lemma}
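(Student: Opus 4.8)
The plan is to compute the Fourier transform directly from the alternative expression \eqref{E:alt-fn}, by integrating against $\exp\{-i\sum_{k=1}^{n}\xi_k\cdot x_k\}$ one variable at a time. The form \eqref{E:alt-fn} is the right starting point because there each $x_k$ sits inside a single Gaussian factor (and in a phase inherited from the previous step), so the successive integrations are immediate; the original product of convolutions would be messier. Since we have already checked that $f_n(t_1,\cdot,\ldots,t_n,\cdot,t,x)\in L^1(\bR^{nd})$, its Fourier transform is a genuine bounded continuous function and Fubini's theorem applies without restriction; in particular we may fix $x_0$, do the $x_1,\ldots,x_n$ integrals first, and postpone the integration $\int_{\bR^d}(\,\cdot\,)\,u_0(\ud x_0)$ until the very end. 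The only property of the heat kernel needed is the elementary identity
\begin{equation*}
\int_{\bR^d}\e^{-i\eta\cdot z}\,G(s,a-z)\,\ud z=\e^{-i\eta\cdot a}\,\e^{-s|\eta|^2/2},\qquad s>0,\ a,\eta\in\bR^d,
\end{equation*}
which is immediate from \eqref{E:Fourier-G} and the symmetry $G(s,-z)=G(s,z)$.

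Fix $0<t_1<\cdots<t_n<t=t_{n+1}$ and $x_0,x\in\bR^d$. In the product in \eqref{E:alt-fn} the variable $x_k$ (for $1\le k\le n$) occurs only in the $k$-th factor, through the term $-x_k$, and in the $(k-1)$-st factor, through the term $\tfrac{t_{k-1}}{t_k}x_k$ (when $k\ge 2$); likewise $x_{n+1}=x$ occurs only in the $n$-th factor. Hence one may integrate successively in $x_1,x_2,\ldots,x_n$: at step $k$ the only dependence on $x_k$ is through the $k$-th Gaussian factor and through a residual phase $\e^{-i\,(t_{k-1}/t_k)\,\eta_{k-1}\cdot x_k}$ produced at step $k-1$, which combines with the original phase $\e^{-i\xi_k\cdot x_k}$ into $\e^{-i\eta_k\cdot x_k}$ with $\eta_1=\xi_1$ and $\eta_k=\xi_k+\tfrac{t_{k-1}}{t_k}\eta_{k-1}$. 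Applying the identity above to the $k$-th factor (whose spatial argument is $a-x_k$ with $a=(1-\tfrac{t_k}{t_{k+1}})x_0+\tfrac{t_k}{t_{k+1}}x_{k+1}$ and whose time is $\tfrac{(t_{k+1}-t_k)t_k}{t_{k+1}}$) then yields (i) the Gaussian factor $\exp\{-\tfrac{1}{2}\tfrac{(t_{k+1}-t_k)t_k}{t_{k+1}}|\eta_k|^2\}$, (ii) a phase $\exp\{-i(1-\tfrac{t_k}{t_{k+1}})\eta_k\cdot x_0\}$ depending only on $x_0$, and (iii) the residual phase $\exp\{-i\tfrac{t_k}{t_{k+1}}\eta_k\cdot x_{k+1}\}$ passed to the next step (for $k=n$ this is a phase in $x$). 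Solving the recursion for $\eta_k$ gives
\begin{equation*}
\eta_k=\frac{1}{t_k}\sum_{j=1}^{k}t_j\xi_j,\qquad 1\le k\le n.
\end{equation*}

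It then remains to substitute this and collect terms. Factor (i) becomes $\exp\{-\tfrac{1}{2}\tfrac{t_{k+1}-t_k}{t_k t_{k+1}}|\sum_{j=1}^k t_j\xi_j|^2\}$, giving the product in the statement. The final residual phase (iii) at $k=n$ is $\exp\{-i\tfrac{t_n}{t}\eta_n\cdot x\}=\exp\{-\tfrac{i}{t}(\sum_{j=1}^n t_j\xi_j)\cdot x\}$. The total $x_0$-phase is $\exp\{-i(\sum_{k=1}^n(1-\tfrac{t_k}{t_{k+1}})\eta_k)\cdot x_0\}$, and interchanging the two sums and telescoping,
\begin{equation*}
\sum_{k=1}^{n}\left(1-\frac{t_k}{t_{k+1}}\right)\eta_k
=\sum_{j=1}^{n}t_j\xi_j\sum_{k=j}^{n}\left(\frac{1}{t_k}-\frac{1}{t_{k+1}}\right)
=\sum_{j=1}^{n}t_j\xi_j\left(\frac{1}{t_j}-\frac{1}{t}\right)
=\sum_{j=1}^{n}\left(1-\frac{t_j}{t}\right)\xi_j.
\end{equation*}
Since the factors (i) and the $x$-phase do not involve $x_0$, they come out of $\int_{\bR^d}(\,\cdot\,)\,u_0(\ud x_0)$, and multiplying by the overall $\lambda^n$ and by $G(t,x-x_0)$ from \eqref{E:alt-fn} reproduces exactly the claimed expression. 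I expect the only real work to be this bookkeeping of the accumulated frequencies $\eta_k$ and the telescoping identity for the $x_0$-phase; no analytic difficulty arises beyond the $L^1$ bound already in hand, which legitimizes Fubini.
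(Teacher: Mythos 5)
Your proposal is correct and follows essentially the same route as the paper: starting from \eqref{E:alt-fn}, applying Fubini (justified by the $L^1$ bound), integrating out $x_1,\ldots,x_n$ successively via \eqref{E:Fourier-G-x}--\eqref{E:Fourier-G}, and collecting the accumulated frequencies and the $x_0$-phase. The only difference is cosmetic: you solve the frequency recursion $\eta_k=\frac{1}{t_k}\sum_{j=1}^k t_j\xi_j$ in closed form and obtain the $x_0$-phase by an interchange-of-sums/telescoping identity, whereas the paper carries the partial results step by step and checks the corresponding identity at the final stage.
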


\begin{proof} By definition,
\[
\cF f_n(t_1,\cdot,\ldots,t_n,\cdot,t,x)(\xi_1,\ldots,\xi_n)=\int_{\bR^{nd}}e^{-i (\xi_1 \cdot x_1+\ldots+\xi_n \cdot x_n)} f_n(t_1,x_1,\ldots,t_n,x_n,t,x)\ud x_1 \ldots \ud x_n.
\]
We use the alternative definition \eqref{E:alt-fn} of the kernel $f_n(\cdot,t,x)$ and Fubini's theorem. We calculate first the $\ud x_1$ integral:
\begin{align*}
\int_{\bR^d}e^{-i \xi_1 \cdot x_1} &G\left(\left(1-\frac{t_1}{t_{2}}\right)t_1,
\left(1-\frac{t_1}{t_{2}}\right)x_0+\frac{t_1}{t_{2}}x_{2}-x_1\right)\ud x_1\\
 =& \exp\left\{-i \xi_1 \cdot \left[ \left(1-\frac{t_1}{t_2}x_0+\frac{t_1}{t_2}x_2 \right)\right] \right\}
\\
& \times
 \overline{\cF G\left(\left(1-\frac{t_1}{t_2}\right)t_1,\cdot\right)(\xi_1)}\:,
\end{align*}
where we used the fact that for any $t>0$, $x \in \bR^d$ and $\xi \in \bR^d$,
\begin{equation}
\label{E:Fourier-G-x}
\cF G(t,x-\cdot) (\xi) =\int_{\bR^d}e^{-i \xi \cdot y}G(t,x-y)\ud y= e^{-i \xi \cdot x}\overline{\cF G(t,\cdot)(\xi)}.
\end{equation}
We calculate next the $\ud x_2$ integral, using again \eqref{E:Fourier-G-x}:
\begin{align*}
\int_{\bR^d}\exp\left\{-i \left(\xi_2+ \frac{t_1}{t_2}\xi_1\right)\cdot x_2 \right\} &G\left(\left(1-\frac{t_2}{t_{3}}\right)t_2,
\left(1-\frac{t_2}{t_{3}}\right)x_0+\frac{t_2}{t_{3}}x_{3}-x_2\right)\ud x_2 \\
=&\exp \left\{-i\left(\xi_2+\frac{t_1}{t_2}\xi_1\right)\cdot \left[\left(1-\frac{t_2}{t_3}\right)x_0+\frac{t_2}{t_3}x_3 \right] \right\}
\\
&\times \overline{\cF G\left(\left(1-\frac{t_2}{t_3}\right) t_2,\cdot\right)\left(\xi_2+\frac{t_1}{t_2}\xi_1 \right)}.
\end{align*}
We continue in this manner. At the last step, we obtain the following $\ud x_n$ integral:
\begin{align*}
\int_{\bR^d}
\exp\left\{-i \left(\xi_n+ \frac{\sum_{j=1}^{n-1}t_j \xi_j}{t_n}\right)\cdot x_n \right\}
& G\left(\left(1-\frac{t_n}{t}\right)t_n,
\left(1-\frac{t_n}{t}\right)x_0+\frac{t_n}{t}x-x_n\right)\ud x_n \\
=& \exp \left\{-i \left(\xi_n+ \frac{\sum_{j=1}^{n-1}t_j \xi_j}{t_n} \right) \cdot \left[ \left(1-\frac{t_n}{t} \right)x_0+\frac{t_n}{t}x\right]\right\}
\\
&\times \overline{\cF G \left(\left(1-\frac{t_n}{t}\right) t_n,\cdot\right)\left(\xi_n+\frac{\sum_{j=1}^{n-1}t_j \xi_j}{t_n} \right)}.
\end{align*}
Putting together all these calculations, it follows that $\cF f_n(t_1,\cdot,\ldots,t_n,\cdot,t,x)(\xi_1,\ldots,\xi_n)$ is equal to
\begin{multline*}
\prod_{k=1}^{n} \overline{\cF G \left(\left(1-\frac{t_k}{t_{k+1}} \right)t_k, \cdot\right)\left(\frac{\sum_{j=1}^{k}t_j \xi_j}{t_k}\right)}
\: \exp \left\{
-i\frac{\sum_{j=1}^{n}t_j \xi_j}{t} \cdot x \right\}    \int_{\bR^d} u_0(\ud x_0)\: G(t,x-x_0) \\
 \times
  \exp \left\{
-i \left[\sum_{j=1}^{n-1}\left(1-\frac{t_j}{t_n}\right)\xi_j+\left(1-\frac{t_n}{t}\right)
\left(\xi_n+
\frac{\sum_{j=1}^{n-1}t_j\xi_j}{t_n} \right) \right]\cdot x_0\right\} .
\end{multline*}
We note that
\[
\sum_{j=1}^{n-1}\left(1-\frac{t_j}{t_n}\right)\xi_j+\left(1-\frac{t_n}{t}\right)
\left(\xi_n+
\frac{\sum_{j=1}^{n-1}t_j\xi_j}{t_n} \right)=\sum_{j=1}^{n}
\left(1-\frac{t_j}{t} \right)\xi_j.
\]
Using \eqref{E:Fourier-G}, we have
\[
\cF G \left(\left(1-\frac{t_k}{t_{k+1}} \right)t_k, \cdot\right)\left(\frac{\sum_{j=1}^{k}t_j \xi_j}{t_k}\right)=\exp \left\{-\frac{1}{2}
\left(1-\frac{t_k}{t_{k+1}}\right) t_k \left|\frac{\sum_{j=1}^{k}t_j \xi_j}{t_k}\right|^2  \right\}.
\]
The conclusion follows.
\end{proof}

To apply Theorem \ref{exist-sol-th}, we first need to check that each kernel $f_n(\cdot,t,x)$ lives in the $n$-th Wiener chaos space $\cH_n$ (and hence, its multiple integral with respect to $W$ is well-defined). The following result shows that Dalang's condition \eqref{E:Dalang} on the spatial spectral measure of the noise is sufficient for achieving this, regardless of the temporal covariance function $\gamma$.

\begin{theorem}
\label{T:fn-in-H}
If $\mu$ satisfies \eqref{E:Dalang}, then
for any $t>0$, $x\in \bR^d$ and $n \geq 1$, $f_n(\cdot,t,x)\in \cH^{\otimes n}$ and $\|f_n(\cdot,t,x)\|_{\cH^{\otimes n}}^2=a_n(t,x)$,
where
\begin{align*}
a_n(t,x):=&\frac{1}{(2\pi)^{nd}}\int_{\bR^{nd}}\int_{[0,t]^{2n}} \prod_{j=1}^{n}\gamma(t_j-s_j)
\cF f_n(t_1,\cdot,\ldots,t_n,\cdot,t,x)(\xi_1,\ldots,\xi_n)\\
& \times \overline{\cF f_n(s_1,\cdot,\ldots,s_n,\cdot,t,x)(\xi_1,\ldots,\xi_n)} \ud t_1 \ldots \ud t_n \ud s_1 \ldots \ud s_n\mu(\ud\xi_1)\ldots \mu(\ud\xi_n).
\end{align*}
\end{theorem}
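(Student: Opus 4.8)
The plan is to first prove that $a_n(t,x)$ is finite under Dalang's condition \eqref{E:Dalang}, and then to deduce, by a routine mollification argument, that $f_n(\cdot,t,x)\in\cH^{\otimes n}$ with $\|f_n(\cdot,t,x)\|_{\cH^{\otimes n}}^2=a_n(t,x)$. Note that $a_n(t,x)$ is exactly the expression obtained by formally applying the spatial Parseval identity \eqref{E:Parseval-x} in each of the $n$ pairs of variables to the bilinear form defining $\langle f_n(\cdot,t,x),f_n(\cdot,t,x)\rangle_{\cH^{\otimes n}}$; since $f_n(t_1,\cdot,\ldots,t_n,\cdot,t,x)\in L^1(\bR^{nd})$ (established just before the statement of Lemma \ref{L:Fourier-fn}), its spatial Fourier transform is the bounded continuous function computed explicitly in Lemma \ref{L:Fourier-fn}, so $a_n(t,x)$ is at least a well-defined integral of a measurable function (its nonnegativity will be a byproduct of the approximation).

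For the finiteness I would start from Lemma \ref{L:Fourier-fn}, bounding the modulus of the $x_0$-integral there by $\int_{\bR^d}G(t,x-x_0)|u_0|(\ud x_0)=J_+(t,x)$, which gives, for $0<t_1<\cdots<t_n<t=t_{n+1}$,
\[
\bigl|\cF f_n(t_1,\cdot,\ldots,t_n,\cdot,t,x)(\xi_1,\ldots,\xi_n)\bigr|\le |\lambda|^n J_+(t,x)\prod_{k=1}^{n}\exp\Bigl\{-\tfrac12\tfrac{t_{k+1}-t_k}{t_k t_{k+1}}\bigl|\textstyle\sum_{j=1}^k t_j\xi_j\bigr|^2\Bigr\}.
\]
Inserting this (and the analogous bound in the $s$-variables) into the definition of $a_n(t,x)$ and taking absolute values, the temporal integrals decouple via the Cauchy--Schwarz inequality applied to the nonnegative kernel $\prod_j\gamma(t_j-s_j)$, together with the elementary estimate $\int_0^t\gamma(t_j-s_j)\,\ud s_j\le\int_{-t}^{t}\gamma(u)\,\ud u=\Gamma_t<\infty$ (finite because $\gamma$ is locally integrable). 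This reduces matters to proving that
\[
I_n(t):=\int_{\bR^{nd}}\int_{\{0<t_1<\cdots<t_n<t\}}\prod_{k=1}^{n}\exp\Bigl\{-\tfrac{t_{k+1}-t_k}{t_k t_{k+1}}\bigl|\textstyle\sum_{j=1}^k t_j\xi_j\bigr|^2\Bigr\}\,\ud t_1\cdots\ud t_n\,\mu(\ud\xi_1)\cdots\mu(\ud\xi_n)<\infty,
\]
after which $a_n(t,x)\le (2\pi)^{-nd}(|\lambda|^2\Gamma_t)^n J_+^2(t,x)\,I_n(t)<\infty$.

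Showing $I_n(t)<\infty$ is the main obstacle, and this is where Dalang's condition \eqref{E:Dalang} is used. The idea is to perform, for each fixed $t_\bullet$, the triangular change of variables $\xi_k\mapsto\eta_k:=t_k^{-1}\sum_{j=1}^k t_j\xi_j$, which has Jacobian $1$ (one has $\xi_k=\eta_k-\tfrac{t_{k-1}}{t_k}\eta_{k-1}$) and diagonalizes the exponent into $\sum_{k=1}^n\tfrac{t_k(t_{k+1}-t_k)}{t_{k+1}}|\eta_k|^2$; one then integrates against $\mu$ iteratively in $\eta_n,\eta_{n-1},\ldots,\eta_1$, and finally over the time simplex, each $\ud t_k$-integral producing, after a Laplace-transform computation, a factor controlled by $\Upsilon(\beta)$ for suitable $\beta>0$. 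The delicate point is that this change of variables does \emph{not} preserve the product structure of $\prod_k\mu(\ud\xi_k)$: each factor becomes a shifted copy, so one must control integrals of the type $\int_{\bR^d}\mu(\ud\zeta)\exp(-c|\zeta+a|^2)$ with enough uniformity in the shift $a$. Using that $\mu$ is tempered, a shift costs only polynomial factors in $|a|$, which are absorbed by the Gaussian decay of the remaining factors; carrying this out carefully (and keeping track of the dependence of the constants on $t$) yields $I_n(t)<\infty$ for every $t>0$.

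Finally, once $a_n(t,x)<\infty$ is available, membership $f_n(\cdot,t,x)\in\cH^{\otimes n}$ together with the norm identity follows by approximation: mollify $f_n(\cdot,t,x)$ in all $n(d+1)$ variables (and add a spatial cutoff if one wishes to land inside $\cD(\bR^{n(d+1)})$), obtaining $f_n^\varepsilon\in\cH^{\otimes n}$ whose spatial Fourier transforms are $\cF f_n$ multiplied by factors bounded by $1$ and converging to $1$. The spectral formula \eqref{E:Parseval-x} gives $\langle f_n^\varepsilon,f_n^{\varepsilon'}\rangle_{\cH^{\otimes n}}$ explicitly, and the finite bound from the previous step serves as the dominating function in a dominated-convergence argument showing that $(f_n^\varepsilon)_\varepsilon$ is Cauchy in $\cH^{\otimes n}$; since $f_n^\varepsilon\to f_n(\cdot,t,x)$ in $L^1(\bR^{nd})$ (hence as distributions), the limit must be $f_n(\cdot,t,x)$, and $\|f_n(\cdot,t,x)\|_{\cH^{\otimes n}}^2=\lim_\varepsilon\|f_n^\varepsilon\|_{\cH^{\otimes n}}^2=a_n(t,x)$.
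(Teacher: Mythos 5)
Your reduction is on track up to the key step: the bound $|\cF f_n(t_1,\cdot,\ldots,t_n,\cdot,t,x)(\xi_1,\ldots,\xi_n)|\le|\lambda|^n J_+(t,x)\prod_k\exp\{-\tfrac12\tfrac{t_{k+1}-t_k}{t_kt_{k+1}}|\sum_{j\le k}t_j\xi_j|^2\}$, the Cauchy--Schwarz decoupling of the $\gamma$-kernel and the factor $\Gamma_t^n$ are exactly the calculations the paper performs in Section \ref{S:Exist} (and the paper disposes of the $\cH^{\otimes n}$-membership itself by invoking Theorem 2.10.c) of \cite{balan-song16}, which plays the role of your mollification argument). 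The genuine gap is in your treatment of the quantity you call $I_n(t)$, which is the heart of the matter and the only place Dalang's condition must enter. Your plan is to handle the shifted integrals $\int_{\bR^d}e^{-c|\zeta+a|^2}\mu(\ud\zeta)$ by the temperedness of $\mu$, arguing that a shift ``costs only polynomial factors in $|a|$'' that are ``absorbed by the Gaussian decay of the remaining factors.'' This is too lossy to close under \eqref{E:Dalang} alone: the constant in such a bound degenerates like a negative power of $c$, and here $c$ is the time-dependent width $t_k(t_{k+1}-t_k)/t_{k+1}$, so each absorption produces factors of order $(t_{k+1}-t_k)^{-1}$ (or worse, since the polynomial in the shift involves the earlier variables and the cost compounds from step to step). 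Multiplying these against the factors $\int e^{-c_k|\eta_k|^2}\mu(\ud\eta_k)$, which under Dalang's condition are only known to be integrable in the time increments (they can blow up almost like the inverse of the increment), yields time integrands that are in general not integrable over the simplex. Your sketch never uses \eqref{E:Dalang} in a way that controls this, so the finiteness of $I_n(t)$ is not established.

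The missing idea is the paper's maximum principle, Lemma \ref{max-principle-lemma}, which exploits the non-negativity of the spatial covariance $f$ (not just temperedness of $\mu$): for every shift $\eta$,
\begin{equation*}
\int_{\bR^d}e^{-a|t\xi+\eta|^2}\,\mu(\ud\xi)\le\int_{\bR^d}e^{-a|t\xi|^2}\,\mu(\ud\xi),
\end{equation*}
so the shifts coming from the partial sums $\sum_{j<k}t_j\xi_j$ cost nothing at all. Integrating iteratively from the innermost variable outward (Lemma \ref{L:I-bnd}) gives $I_t^{(n)}(t_1,\ldots,t_n)\le\prod_{i=1}^n k\bigl(2(t_{i+1}-t_i)t_i/t_{i+1}\bigr)$ with $k$ as in \eqref{E:k2}, and then Lemma \ref{L:Int-I} bounds the simplex integral by $2^n h_n(t)$, which is finite precisely because of Dalang's condition via the Laplace transform identity \eqref{int-hn}. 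Replacing your ``polynomial cost of shifts'' step by this maximum-principle argument (or by some equally exact mechanism using $f\ge 0$) is what is needed to make the finiteness of $a_n(t,x)$, and hence your concluding approximation argument, go through.
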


\begin{proof} We apply Theorem 2.10.c) of \cite{balan-song16}. To see that $f_n(\cdot,t,x)$ satisfies the conditions of this theorem, we note that
by Lemma \ref{L:Fourier-fn}, the map $$(t_1,\ldots,t_n,\xi_1, \ldots,\xi_n) \mapsto \cF f_n(t_1,\cdot,\ldots,t_n,\cdot,t,x)(\xi_1,\ldots,\xi_n)=:
\phi_{\xi_1,\ldots,\xi_n}(t_1,\ldots,t_n)$$ is measurable on $\bR^n \times \bR^{nd}$. Moreover, for any $\xi_1, \ldots,\xi_n \in \bR^d$, the map
$(t_1,\ldots,t_n) \mapsto \phi_{\xi_1,\ldots,\xi_n}(t_1,\ldots,t_n)$ is continuous and bounded by $\lambda^n J_+(t,x)$.  Calculations similar to those presented in Section \ref{S:Exist} below show that $a_n(t,x)<\infty$.
\end{proof}

For the remaining of the article, we assume that \eqref{E:Dalang} holds. By Theorems \ref{exist-sol-th} and \ref{T:fn-in-H}, it follows that the necessary and sufficient condition for the existence of the solution of \eqref{E:heat} is the following: for any $t \geq 0$ and $x \in \bR^d$,
\begin{equation}
\label{series-conv}
\sum_{n \geq 0}n! \, \|\widetilde{f}_n (\cdot,t,x)\|_{\cH^{\otimes n}}^2<\infty.
\end{equation}

\section{Existence of Solution} 
\label{S:Exist}

In this section, we give the proof of Theorem \ref{T:ExUni}. This will be based on several preliminary results.

In the first step, we will show that condition \eqref{series-conv} holds for any $t \geq 0$ and $x \in \bR^d$. When this condition holds,
$$\E(|u(t,x)|^2)=\sum_{n \geq 0} \E(|J_n(t,x)|^2)<\infty.$$
We denote
$$\E(|J_n(t,x)|^2)=\E(|I_n(f_n(\cdot,t,x))|^2)=n! \|\widetilde{f}_n(\cdot,t,x)\|_{\cH^{\otimes n}}^{2}=:\frac{1}{n!}\alpha_n(t,x).$$
With this notation, condition \eqref{series-conv} becomes:
\begin{equation}
\label{series-conv2}
\sum_{n \geq 0}\frac{1}{n!}\alpha_n(t,x)<\infty.
\end{equation}

Using the definition of the norm in $\cH^{\otimes n}$, we see that
$$\alpha_n(t,x)=\int_{[0,t]^{2n}}\prod_{j=1}^{n}\gamma(t_j-s_j)
\psi_{t,x}^{(n)}({
\bf t},{\bf s})\ud {\bf  t}\ud {\bf  s},$$
where
\begin{align}
\nonumber
\psi_{t,x}^{(n)}({\bf t},{\bf s})&=\int_{\bR^{2nd}}\prod_{j=1}^{n}f(x_j-y_j)g_{{\bf t},t,x}^{(n)}(x_1,\ldots,x_n)g_{{\bf s},t,x}^{(n)}(y_1, \ldots,y_n)\ud {\bf  x}\ud {\bf  y}\\
\label{E:def-psi}
&= \frac{1}{(2\pi)^{nd}} \int_{\bR^{nd}}\cF g_{{\bf t},t,x}^{(n)}(\xi_1,\ldots,\xi_n)\overline{\cF g_{{\bf s},t,x}^{(n)}(\xi_1, \ldots,\xi_n)}\mu(\ud\xi_1)\ldots \mu(\ud\xi_n)
\end{align}
and we denote
\begin{align*}
g_{{\bf t},t,x}^{(n)}(x_1,\ldots,x_n)&=n! \widetilde{f}_n(t_1,x_1,\ldots,t_n,x_n,t,x)\\
&=  \lambda^n \sum_{\rho \in S_n}G(t-t_{\rho(n)},x-x_{\rho(n)})\ldots G(t_{\rho(2)}-t_{\rho(1)},x_{\rho(2)}-x_{\rho(1)})\\
&\quad\qquad\times J_0(t_{\rho(1)},x_{\rho(1)}) 1_{\{0<t_{\rho(1)<\ldots<t_{\rho(n)}}<t\}}.
\end{align*}

To estimate $\alpha_n(t,x)$ we proceed as on page 11 of \cite{HHNT15}. By the Cauchy-Schwarz inequality and the inequality $ab \leq \frac{1}{2}(a^2+b^2)$, we have:
$$\psi_{t,x}^{(n)}({\bf t},{\bf s}) \leq \psi_{t,x}^{(n)}({\bf t},{\bf t})^{1/2}
\psi_{t,x}^{(n)}({\bf s},{\bf s})^{1/2} \leq \frac{1}{2}\left(\psi_{t,x}^{(n)}({\bf t},{\bf t})+\psi_{t,x}^{(n)}({\bf s},{\bf s})\right).$$

Since $\gamma$ is symmetric, we obtain:
\begin{align*}
\alpha_n(t,x)\leq & \frac{1}{2}\left(\int_{[0,t]^{2n}}\prod_{j=1}^{n}\gamma(t_j-s_j)\psi_{t,x}^{(n)}({\bf t},{\bf t})\ud {\bf  t}\ud {\bf  s}+\int_{[0,t]^{2n}}\prod_{j=1}^{n}\gamma(t_j-s_j)\psi_{t,x}^{(n)}({\bf s},{\bf s})\ud {\bf  t}\ud {\bf  s}\right) \\
=& \int_{[0,t]^{2n}}\prod_{j=1}^{n}\gamma(t_j-s_j)\psi_{t,x}^{(n)}({\bf t},{\bf t})\ud {\bf  t}\ud {\bf  s}.
\end{align*}

We now use the following elementary lemma, which can be proved by induction.

\begin{lemma}[Lemma 3.3 of \cite{balan-song16}]
\label{basic-ineq-lemma}
For any $n \geq 1$ and for any non-negative (or integrable) function $h :[0,t]^n \to \bR$, we have
\begin{equation}
\label{basic-ineq}
\int_{[0,t]^n}\prod_{j=1}^n\gamma(t_j-s_j)h({\bf t})\ud {\bf  t}\ud {\bf  s}\leq \Gamma_t^n \int_{[0,t]^n}h({\bf t})\ud {\bf  t},
\end{equation}
where $\Gamma_t$ is defined in \eqref{E:Gammat}.
 \end{lemma}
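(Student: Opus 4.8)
The plan is to collapse the $2n$-fold integral into a product of one-dimensional integrals by Tonelli's theorem and then bound each factor by $\Gamma_t$. Assume first that $h\ge 0$ (the integrable case then follows by applying the resulting estimate to $|h|$). Since the integrand is non-negative, Tonelli's theorem permits carrying out the $s_1,\dots,s_n$ integrations first, so that
\[
\int_{[0,t]^{2n}}\prod_{j=1}^n\gamma(t_j-s_j)\,h(\mathbf t)\,\ud\mathbf t\,\ud\mathbf s
=\int_{[0,t]^n}h(\mathbf t)\left(\prod_{j=1}^n\int_0^t\gamma(t_j-s_j)\,\ud s_j\right)\ud\mathbf t .
\]

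Next I would bound each inner integral uniformly in $t_j\in[0,t]$. Fixing $t_j\in[0,t]$ and substituting $u=t_j-s_j$ gives $\int_0^t\gamma(t_j-s_j)\,\ud s_j=\int_{t_j-t}^{t_j}\gamma(u)\,\ud u$; since $0\le t_j\le t$ the interval $[t_j-t,t_j]$ is contained in $[-t,t]$, and as $\gamma\ge 0$ and $\gamma$ is locally integrable (so $\Gamma_t<\infty$) one gets $\int_0^t\gamma(t_j-s_j)\,\ud s_j\le\int_{-t}^{t}\gamma(u)\,\ud u=\Gamma_t$. Multiplying these $n$ estimates yields $\prod_{j=1}^n\int_0^t\gamma(t_j-s_j)\,\ud s_j\le\Gamma_t^n$ for every $\mathbf t\in[0,t]^n$, and inserting this back into the identity above gives \eqref{basic-ineq}.

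An entirely equivalent route is the short induction on $n$ mentioned in the statement: $n=1$ is precisely the one-dimensional estimate just proved, while the inductive step isolates the $(t_n,s_n)$-block, bounds $\int_0^t\gamma(t_n-s_n)\,\ud s_n\le\Gamma_t$, and invokes the hypothesis at level $n-1$. This lemma is elementary and I foresee no genuine difficulty; the only point deserving a word of care is the interchange of the order of integration, which is legitimate because the integrand $\prod_{j=1}^n\gamma(t_j-s_j)\,h(\mathbf t)$ is non-negative — and, in the integrable case, jointly integrable, since $\gamma\in L^1([-t,t])$ and $h\in L^1([0,t]^n)$.
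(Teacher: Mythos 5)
Your proof is correct: the change of variables $u=t_j-s_j$ together with $\gamma\ge 0$ gives $\int_0^t\gamma(t_j-s_j)\,\ud s_j\le\int_{-t}^{t}\gamma(u)\,\ud u=\Gamma_t$ uniformly in $t_j\in[0,t]$, and Tonelli (justified by non-negativity) then yields \eqref{basic-ineq}; note only that the domain on the left of \eqref{basic-ineq} should be read as $[0,t]^{2n}$, as you did. The paper itself does not reproduce a proof (it cites Lemma 3.3 of \cite{balan-song16} and remarks it can be proved by induction), and your direct argument, as well as the induction you sketch, rests on exactly this same one-dimensional estimate, so this is essentially the intended approach.
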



\vspace{3mm}

Applying Lemma \ref{basic-ineq-lemma} to the function $h({\bf t})=\psi_n({\bf t},{\bf t})$, we obtain:
\begin{equation}
\label{estimate-alpha1}
\alpha_n(t,x) \leq \Gamma_t^n \int_{[0,t]^n}\psi_{t,x}^{(n)}({\bf t},{\bf t})\ud {\bf  t}=\Gamma_t^n \sum_{\rho \in S_n}\int_{0<t_{\rho(1)}<\ldots<t_{\rho(n)}<t}\psi_{t,x}^{(n)}({\bf t},{\bf t})\ud {\bf  t}.
\end{equation}


\begin{lemma}
\label{le-lemma}
If $0<t_{\rho(1)}<\ldots<t_{\rho(n)}<t=:t_{\rho(n+1)}$, then
$$\psi_{t,x}^{(n)}({\bf t},{\bf t}) \leq  \frac{\lambda^{2n} J_+^2(t,x)}{(2\pi)^{nd}} \int_{\bR^{nd}}
\exp\left\{-\sum_{k=1}^{n}
\left(\frac{t_{\rho(k+1)}-t_{\rho(k)}}{t_{\rho(k+1)}t_{\rho(k)}}
\left|\sum_{j=1}^{k}t_{\rho(j)}\xi_{j}\right|^2\right) \right\}\mu(\ud\xi_1)\ldots \mu(\ud\xi_n).$$
\end{lemma}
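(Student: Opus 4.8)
The plan is to start from the Fourier-side expression for $\psi_{t,x}^{(n)}(\mathbf{t},\mathbf{t})$ given in \eqref{E:def-psi}, namely
\[
\psi_{t,x}^{(n)}(\mathbf{t},\mathbf{t})=\frac{1}{(2\pi)^{nd}}\int_{\bR^{nd}}\bigl|\cF g_{\mathbf{t},t,x}^{(n)}(\xi_1,\ldots,\xi_n)\bigr|^2\,\mu(\ud\xi_1)\ldots\mu(\ud\xi_n),
\]
and to bound the integrand pointwise. First I would reduce to a single ordering: on the event $\{0<t_{\rho(1)}<\ldots<t_{\rho(n)}<t\}$ only the permutation $\rho$ (and its "reverse-sorted" relabelling) contributes to $g_{\mathbf{t},t,x}^{(n)}$, so that $\cF g_{\mathbf{t},t,x}^{(n)}$ restricted to this simplex equals a single term $\cF f_n$ with the time points re-sorted, up to the corresponding permutation of the frequency variables $\xi_1,\dots,\xi_n$. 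Since $\mu^{\otimes n}$ is symmetric under permutations of the $\xi_j$'s, this relabelling is harmless inside the integral, so it suffices to treat $\rho=\mathrm{id}$, i.e. $0<t_1<\ldots<t_n<t$, and show
\[
\bigl|\cF f_n(t_1,\cdot,\ldots,t_n,\cdot,t,x)(\xi_1,\ldots,\xi_n)\bigr|\le |\lambda|^n J_+(t,x)\,\exp\left\{-\frac12\sum_{k=1}^n\frac{t_{k+1}-t_k}{t_kt_{k+1}}\Bigl|\sum_{j=1}^k t_j\xi_j\Bigr|^2\right\}.
\]

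The second step is to read this bound directly off Lemma \ref{L:Fourier-fn}. That lemma gives $\cF f_n$ as $\lambda^n$ times a product of real exponentials $\exp\{-\frac12\frac{t_{k+1}-t_k}{t_kt_{k+1}}|\sum_{j\le k}t_j\xi_j|^2\}$, times a unimodular factor $\exp\{-\frac{i}{t}(\sum_j t_j\xi_j)\cdot x\}$, times the integral $\int_{\bR^d}\exp\{-i[\sum_j(1-t_j/t)\xi_j]\cdot x_0\}G(t,x-x_0)u_0(\ud x_0)$. Taking absolute values, the unimodular factor drops out, and the last integral is bounded in modulus by $\int_{\bR^d}G(t,x-x_0)|u_0|(\ud x_0)=J_+(t,x)$ by the triangle inequality for the signed measure $u_0$ (using $G\ge 0$). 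Squaring yields exactly the claimed pointwise bound on $|\cF g_{\mathbf{t},t,x}^{(n)}|^2$ on the simplex $\{t_1<\ldots<t_n\}$ (with the general $\rho$ obtained by the relabelling above), and integrating against $(2\pi)^{-nd}\mu^{\otimes n}$ gives the lemma.

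The one point that needs a little care — and which I expect to be the main (minor) obstacle — is the bookkeeping in the first step: matching the symmetrized kernel $g_{\mathbf{t},t,x}^{(n)}=n!\,\widetilde f_n$ to a single unsymmetrized $f_n$ on the simplex, and checking that the permutation of the $x_j$-variables induced there corresponds to a permutation of the $\xi_j$-variables that leaves $\mu^{\otimes n}$ invariant, so that the bound on $|\cF f_n|$ for the identity ordering suffices for all $\rho$. This is purely combinatorial: on the set where $t_{\rho(1)}<\ldots<t_{\rho(n)}$, exactly one summand in the definition of $\widetilde f_n$ is supported, and after the change of names $x_j\mapsto x_{\rho(j)}$ the Fourier transform in those renamed variables is $\cF f_n$ evaluated at the correspondingly renamed frequencies, whose modulus is controlled by Lemma \ref{L:Fourier-fn}. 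Everything else is substitution and the elementary estimate $|J_0(t,x)|\le J_+(t,x)$ built into the statement.
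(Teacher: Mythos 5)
Your proposal is correct and follows essentially the same route as the paper: the paper's proof also starts from the Fourier-side formula \eqref{E:def-psi}, notes that $\cF g_{{\bf t},t,x}^{(n)}$ admits the analogue \eqref{E:Fourier-gt} of Lemma \ref{L:Fourier-fn} (which implicitly uses that only the single permutation $\rho$ survives on the simplex), and concludes by bounding the unimodular factors by $1$ and the $u_0$-integral by $J_+(t,x)$, with the harmless relabelling $\xi_{\rho(j)}\mapsto\xi_j$ under the symmetric measure $\mu^{\otimes n}$. Your write-up merely makes the combinatorial bookkeeping explicit, which the paper leaves implicit.
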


\begin{proof}
By definition,
$$\psi_{t,x}^{(n)}({\bf t},{\bf t})=\frac{1}{(2\pi)^{nd}}
\int_{\R^{nd}}\left|\cF g_{{\bf t},t,x}^{(n)}(\xi_1,\ldots,\xi_n)\right|^2\mu(\ud \xi_1)\dots\mu(\ud \xi_n).$$
Similarly to Lemma \ref{L:Fourier-fn}, it can be shown that
\begin{align}
\nonumber
\cF g_{{\bf t},t,x}^{(n)}(\xi_1,\ldots,\xi_n)=&\lambda^n \prod_{k=1}^{n} \exp\left\{-\frac{1}{2}
\frac{t_{\rho(k+1)}-t_{\rho(k)}}{t_{\rho(k)} t_{\rho(k+1)}} \left|\sum_{j=1}^k t_{\rho(j)} \xi_{\rho(j)}\right|^2 \right\}
\exp\left\{- \frac{i}{t}\left(\sum_{j=1}^{n}t_{j} \xi_{j}\right)\cdot x \right\} \\
\label{E:Fourier-gt}
 &\times\int_{\bR^d} \exp\left\{-i \left[ \sum_{j=1}^{n}\left(1-\frac{t_{j}}{t}\right)\xi_{j} \right]\cdot x_0 \right\}G(t,x-x_0)u_0(\ud x_0).
\end{align}
The conclusion follows.
\end{proof}

\begin{lemma}
\label{L:Jn}
For any $t>0$ and $x \in \bR^d$,
$$\E\left(|J_n(t,x)|^2\right)=\frac{1}{n!}\alpha_n(t,x) \leq \lambda^{2n} \Gamma_t^n  J_+^2(t,x) \int_{0<t_{1}<\ldots<t_{n}<t}I_{t}^{(n)}(t_1,\ldots,t_n)\ud t_1\ldots \ud t_n,$$
where
$$I_{t}^{(n)}(t_1,\ldots,t_n):=\frac{1}{(2\pi)^{nd}}\int_{\bR^{nd}}
\exp\left\{-\sum_{k=1}^{n}
\left(\frac{t_{k+1}-t_{k}}{t_{k+1}t_{k}}
\left|\sum_{j=1}^{k}t_{j}\xi_j\right|^2\right) \right\}\mu(\ud\xi_1)\ldots \mu(\ud\xi_n)$$
and $t_{n+1}=t$.
\end{lemma}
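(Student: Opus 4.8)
The equality $\E(|J_n(t,x)|^2)=\frac{1}{n!}\alpha_n(t,x)$ is only the notation recorded above, so the content of the lemma is the displayed inequality, which I would obtain by assembling the estimates already in hand. The starting point is \eqref{estimate-alpha1},
$$\alpha_n(t,x) \leq \Gamma_t^n \sum_{\rho \in S_n}\int_{0<t_{\rho(1)}<\ldots<t_{\rho(n)}<t}\psi_{t,x}^{(n)}({\bf t},{\bf t})\,\ud {\bf t},$$
into which I would substitute the pointwise bound for $\psi_{t,x}^{(n)}({\bf t},{\bf t})$ on the simplex $\{0<t_{\rho(1)}<\ldots<t_{\rho(n)}<t\}$ furnished by Lemma \ref{le-lemma}. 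At that stage the whole matter reduces to checking that each of the $n!$ resulting summands is one and the same integral, namely $\lambda^{2n}J_+^2(t,x)\int_{0<s_1<\ldots<s_n<t} I_t^{(n)}(s_1,\ldots,s_n)\,\ud s_1\cdots\ud s_n$; once that is done, summing over $\rho$ produces the factor $n!$ that cancels the $\frac{1}{n!}$ on the left-hand side.

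To see that each summand is independent of $\rho$, in the $\rho$-th term I would carry out the change of variables $s_k:=t_{\rho(k)}$, $k=1,\dots,n$, keeping the convention $t_{\rho(n+1)}=t_{n+1}=t$, so that $s_{n+1}=t$ as well. Since $\rho$ is a permutation, this substitution is merely a relabeling of the integration variables: its Jacobian has modulus $1$, it maps $\{0<t_{\rho(1)}<\ldots<t_{\rho(n)}<t\}$ onto the ordered simplex $\{0<s_1<\ldots<s_n<t\}$, and — because the index $j$ in $\sum_{j=1}^{k}t_{\rho(j)}\xi_j$ is a dummy summation index — it turns $t_{\rho(j)}$ into $s_j$ while leaving the $\xi_j$ untouched. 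Hence the exponent inside the spectral integral of Lemma \ref{le-lemma} becomes exactly $-\sum_{k=1}^{n}\frac{s_{k+1}-s_k}{s_{k+1}s_k}\bigl|\sum_{j=1}^{k}s_j\xi_j\bigr|^2$, and the right-hand side of the Lemma \ref{le-lemma} bound becomes precisely $\lambda^{2n}J_+^2(t,x)\,I_t^{(n)}(s_1,\ldots,s_n)$, the factor $(2\pi)^{-nd}$ there being exactly the normalization built into $I_t^{(n)}$. Consequently
$$\int_{0<t_{\rho(1)}<\ldots<t_{\rho(n)}<t}\psi_{t,x}^{(n)}({\bf t},{\bf t})\,\ud {\bf t}\;\leq\;\lambda^{2n}J_+^2(t,x)\int_{0<s_1<\ldots<s_n<t} I_t^{(n)}(s_1,\ldots,s_n)\,\ud s_1\cdots\ud s_n,$$
with a right-hand side not depending on $\rho$; summing over the $n!$ permutations and returning to \eqref{estimate-alpha1}, then dividing by $n!$, yields the asserted bound.

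There is no substantial obstacle: this lemma is a bookkeeping consolidation of \eqref{estimate-alpha1} and Lemma \ref{le-lemma}. The only two points that call for a little care are maintaining the index convention $t_{\rho(n+1)}=t$ consistently, so that the change of variables behaves correctly at the endpoint index $k=n$, and the (routine, since every integrand is non-negative) verification that the interchanges of temporal and spectral integrations used along the way — in particular in reading off the value of each $\rho$-summand — are legitimate.
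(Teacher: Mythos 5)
Your proposal is correct and follows essentially the same route as the paper's own proof: substitute the bound of Lemma \ref{le-lemma} into \eqref{estimate-alpha1}, relabel via $t_j'=t_{\rho(j)}$ (with $t_{n+1}'=t$) so that every one of the $n!$ summands becomes $\lambda^{2n}J_+^2(t,x)\int_{0<t_1<\ldots<t_n<t}I_t^{(n)}(t_1,\ldots,t_n)\,\ud t_1\cdots\ud t_n$, and cancel the resulting factor $n!$ against $1/n!$. The points you flag (the endpoint convention $t_{\rho(n+1)}=t$ and the non-negativity justifying the interchange of integrals) are exactly the minor checks implicit in the paper's argument.
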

\begin{proof}
Using Lemma \ref{le-lemma}, it follows that
\begin{align*}
  \int_{0<t_{\rho(1)}<\ldots<t_{\rho(n)}<t}\psi_{t,x}^{(n)}({\bf t},{\bf t})\ud {\bf t}
\leq  & \lambda^{2n} J_+^2(t,x)\frac{1}{(2\pi)^{nd}} \int_{0<t_{\rho(1)}<\ldots<t_{\rho(n)}<t}\ud {\bf t} \int_{\bR^{nd}}\mu(\ud\xi_1)\ldots \mu(\ud\xi_n)\\
&\qquad\times \exp\left\{-\sum_{k=1}^{n}
\left(\frac{t_{\rho(k+1)}-t_{\rho(k)}}{t_{\rho(k+1)}t_{\rho(k)}}
\left|\sum_{j=1}^{k}t_{\rho(j)}\xi_j\right|^2\right) \right\} \\
= &  \lambda^{2n} J_+^2(t,x) \frac{1}{(2\pi)^{nd}}\int_{0<t_{1}'<\ldots<t_{n}'<t}\ud {\bf  t}'\int_{\bR^{nd}}
\mu(\ud\xi_1)\ldots \mu(\ud\xi_n)\\
&\qquad\times\exp\left\{-\sum_{i=1}^{n}
\left(\frac{t_{k+1}'-t_{k}'}{t_{k+1}'t_{k}'}
\left|\sum_{j=1}^{k}t_{j}'\xi_j\right|^2\right) \right\},
\end{align*}
where for the last equality we used the change of variable $t_{j}'=t_{\rho(j)}$ for $j=1, \ldots,n$ and we denoted $t_{n+1}'=t$. The conclusion follows using \eqref{estimate-alpha1}.
\end{proof}

\vspace{3mm}

We now use the following maximum principle, which is of independent interest.

\begin{lemma}
\label{max-principle-lemma}
Let $\mu$ be a tempered measure on $\bR^d$ such that its Fourier transform in $\cS'_{\bC}(\bR^d)$ is a locally integrable function $f$, i.e. \eqref{E:f-Fourier-mu} holds. Assume that $f$ is non-negative. Then for any $\psi \in \cS(\bR^d)$ such that $\psi * \widetilde{\psi}$ is non-negative, where $\widetilde{\psi}(x)=\psi(-x)$ for all $x \in \bR^d$, we have:
\begin{equation}
\label{max-principle1}
\sup_{\eta \in \bR^d}\int_{\bR^d}|\cF \psi(\xi+\eta)|^2 \mu(\ud\xi)=\int_{\bR}|\cF \psi(\xi)|^2\mu(\ud\xi).
\end{equation}
In particular, for any $a>0$ and $t>0$,
\begin{equation}
\label{max-principle2}
\sup_{\eta \in \bR^d}\int_{\bR^d}e^{-a|t\xi+\eta|^2} \mu(\ud\xi)=\int_{\bR}e^{-a|t\xi|^2}\mu(\ud\xi).
\end{equation}
\end{lemma}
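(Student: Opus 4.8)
The plan is to prove the identity \eqref{max-principle1} first, since \eqref{max-principle2} follows from it by a suitable choice of $\psi$. The inequality ``$\geq$'' in \eqref{max-principle1} is trivial: take $\eta=0$. So the whole content is the bound $\int_{\bR^d}|\cF\psi(\xi+\eta)|^2\mu(\ud\xi)\le\int_{\bR^d}|\cF\psi(\xi)|^2\mu(\ud\xi)$ for every $\eta\in\bR^d$. The key observation is that the integrand $|\cF\psi(\xi+\eta)|^2$ is, up to a harmless modulation, the Fourier transform of $(\psi*\widetilde\psi)$ translated/modulated; more precisely, write $\psi_\eta(x):=e^{i\eta\cdot x}\psi(x)$, so that $\cF\psi_\eta(\xi)=\cF\psi(\xi-\eta)$ and hence $|\cF\psi_\eta(\xi)|^2 = |\cF\psi(\xi-\eta)|^2$. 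Then $|\cF\psi_\eta|^2=\cF(\psi_\eta*\widetilde{\psi_\eta})$, and one computes $(\psi_\eta*\widetilde{\psi_\eta})(x)=e^{i\eta\cdot x}(\psi*\widetilde\psi)(x)$. Using the definition \eqref{E:f-Fourier-mu} of $\mu$ as the distributional Fourier transform of $f$ (valid on Schwartz functions, and $\psi_\eta*\widetilde{\psi_\eta}\in\cS(\bR^d)$), we get
\[
\int_{\bR^d}|\cF\psi(\xi-\eta)|^2\mu(\ud\xi)=(2\pi)^d\int_{\bR^d}e^{i\eta\cdot x}(\psi*\widetilde\psi)(x)\,f(x)\,\ud x.
\]
Since $\psi*\widetilde\psi\ge 0$ and $f\ge 0$, the integrand $(\psi*\widetilde\psi)(x)f(x)$ is a non-negative (integrable, once we check integrability) function of $x$, so its Fourier transform is maximized in absolute value at the origin $\eta=0$; that is,
\[
\left|\int_{\bR^d}e^{i\eta\cdot x}(\psi*\widetilde\psi)(x)f(x)\,\ud x\right|\le\int_{\bR^d}(\psi*\widetilde\psi)(x)f(x)\,\ud x=(2\pi)^{-d}\int_{\bR^d}|\cF\psi(\xi)|^2\mu(\ud\xi),
\]
which is exactly the desired bound (after replacing $\eta$ by $-\eta$, which is immaterial). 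This gives \eqref{max-principle1}.

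For \eqref{max-principle2} the idea is to realize $e^{-a|\xi|^2}$ as $|\cF\psi(\xi)|^2$ for a Gaussian $\psi$. Indeed, if $\psi(x)=c\,e^{-|x|^2/(4a)}$ with an appropriate normalizing constant $c>0$, then $\cF\psi(\xi)=c'\,e^{-a|\xi|^2}$, so $|\cF\psi(\xi)|^2$ is a constant multiple of $e^{-2a|\xi|^2}$; rescaling $a$ we may arrange $|\cF\psi(\xi)|^2=e^{-a|\xi|^2}$. Moreover $\psi*\widetilde\psi$ is again a positive Gaussian, hence non-negative, so $\psi$ is admissible in \eqref{max-principle1}. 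Applying \eqref{max-principle1} with $\eta$ replaced by $\eta$ and then substituting $\xi\mapsto t\xi$ — equivalently applying the identity to the rescaled Gaussian $\psi_t(x):=t^{-d}\psi(x/t)$, whose Fourier transform is $\cF\psi(t\xi)$ — yields $\sup_{\eta}\int e^{-a|t\xi+\eta|^2}\mu(\ud\xi)=\int e^{-a|t\xi|^2}\mu(\ud\xi)$, which is \eqref{max-principle2}.

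The main technical obstacle is justifying the interchange of distributional pairing and integration, i.e.\ making sure that all the expressions above are genuine absolutely convergent integrals rather than merely distributional pairings. Since $\mu$ is only a tempered measure, $\int_{\bR^d}|\cF\psi(\xi)|^2\mu(\ud\xi)$ could in principle be infinite; but if it is infinite the inequality \eqref{max-principle1} holds trivially, so one may assume it is finite, and then $|\cF\psi(\cdot+\eta)|^2$ is dominated by an $\mu$-integrable function uniformly... no, that last point needs care: the clean way is to first establish the finiteness of $\int(\psi*\widetilde\psi)f\,\ud x$ directly from $\psi*\widetilde\psi\in\cS(\bR^d)$, $f$ locally integrable and of at most polynomial growth (which follows from $\mu$ being tempered via \eqref{E:f-Fourier-mu}), and then run the Fourier-transform-of-a-non-negative-$L^1$-function argument, which is entirely elementary. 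One should also note that $\psi*\widetilde\psi$ is real-valued and even, so its Fourier transform is real; combined with non-negativity of $(\psi*\widetilde\psi)f$ this is what pins the supremum at $\eta=0$. I expect the write-up to be short once these integrability points are dispatched.
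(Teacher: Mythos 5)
Your proposal is correct and follows essentially the same route as the paper's proof: for \eqref{max-principle1} you identify $|\cF \psi(\cdot+\eta)|^2$ with the Fourier transform of a modulation of $\psi*\widetilde{\psi}$, pair it with $\mu$ through \eqref{E:f-Fourier-mu}, and conclude because the Fourier transform of the non-negative integrable function $(\psi*\widetilde{\psi})f$ has maximal modulus at the origin, exactly as in the paper. The only cosmetic differences are that you modulate $\psi$ instead of modulating $\psi*\widetilde{\psi}$ directly (note that for the complex-valued $\psi_\eta$ the reflection in the autocorrelation must carry a complex conjugate for the identity $|\cF\psi_\eta|^2=\cF(\psi_\eta*\widetilde{\overline{\psi_\eta}})$ to hold), and that for \eqref{max-principle2} you rescale the Gaussian test function, whereas the paper rescales the measure to $\mu_t=\mu\circ h_t^{-1}$ with Fourier transform $f_t(x)=f(tx)$; both variants are valid.
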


\begin{proof}
Note that for any function $g\in L^1(\bR^d)$ and for any $\xi,\eta \in \bR^d$, we have
$$\cF g(\xi+\eta)=\int_{\bR^d}e^{-i \xi \cdot x}e^{-i \eta \cdot x}g(x)\ud x= \cF (e^{-i \eta \cdot}g)(\xi).$$ Applying this to the function $g=\psi*\widetilde{\psi}$, we obtain that
$$|\cF \psi (\xi+\eta)|^2=\cF (\psi * \widetilde{\psi})(\xi+\eta)=\cF(e^{-i \eta \cdot }(\psi* \widetilde{\psi}))(\xi),$$
for any $\xi,\eta \in \bR^d$. For each $\eta \in \bR^d$ fixed, we apply  \eqref{E:f-Fourier-mu} to the function $\varphi=e^{-i \eta \cdot}(\psi *\widetilde{\psi}) \in \cS_{\bC}(\bR^d)$. We obtain that for any $\eta \in \bR^d$,
\begin{align*}
0 & \leq \int_{\bR^d}|\cF \psi (\xi+\eta)|^2\mu(\ud\xi)=\int_{\bR^d}
\cF(e^{-i \eta \cdot }(\psi* \widetilde{\psi}))(\xi)\mu(\ud\xi) \\
&=
(2\pi)^d \int_{\bR^d} e^{-i \eta \cdot x} (\psi* \widetilde{\psi})(x)f(x)\ud x
=(2\pi)^d \left|\int_{\bR^d} e^{-i \eta \cdot x} (\psi* \widetilde{\psi})(x)f(x)\ud x\right| \\
&\leq (2\pi)^d \int_{\bR^d} (\psi* \widetilde{\psi})(x)f(x)\ud x=\int_{\bR^d}|\cF \psi(\xi)|^2\mu(\ud\xi),
\end{align*}
using the fact that $|\int \ldots| \leq \int|\ldots|$ and $|e^{-i\eta \cdot x}|=1$. This proves \eqref{max-principle1}.

To prove \eqref{max-principle2}, we fix $t>0$ and consider the measure $\mu_t=\mu \circ h_t^{-1}$ on $\bR^d$, where $h_t(x)=t\xi$ for all $\xi \in \bR^d$. Using \eqref{E:Fourier-G}, it follows that
\[
\int_{\bR^d}e^{-a|t\xi+\eta|^2}\mu(\ud\xi)=
\int_{\bR^d}e^{-a|\xi+\eta|^2}\mu_t(\ud\xi)=\int_{\bR^d}|\cF G(a,\cdot)(\xi+\eta)|^2 \mu_t(\ud\xi).
\]
We note that $\mu_t$ is a tempered measure and its Fourier transform in $\cS'_{\bC}(\bR^d)$ is the locally integrable function $f_t$, defined by $f_t(x)=f(tx)$ for all $x \in \bR^d$ (see the proof of Lemma 3.2 of \cite{balan-song16}).
Applying \eqref{max-principle1} to the function $\psi=G(a,\cdot) \in \cS(\bR^d)$ and the Fourier pair $(\mu_t,f_t)$, we obtain that for any $\eta \in \bR^d$,
$$\int_{\bR^d}|\cF G(a,\cdot)(\xi+\eta)|^2\mu_t(\ud\xi)\leq \int_{\bR^d}|\cF G(a,\cdot)(\xi)|^2\mu_t(\ud\xi)=\int_{\bR^d}e^{-a|t\xi|^2}\mu(\ud\xi).$$
This completes the proof of Lemma \ref{max-principle-lemma}.
\end{proof}

Now we need to introduce some notation.
Following \cite{CK15Color}, define
\begin{align}
\label{E:k}
k(t):= \int_{\R^d}f(z)G(t,z)\ud z.
\end{align}
By \eqref{E:f-Fourier-mu} and \eqref{E:Fourier-G}, we see that
\begin{align}\label{E:k2}
k(t)=\frac{1}{(2\pi)^{d}}\int_{\R^d}\exp\left(-\frac{t|\xi|^2}{2}\right)\mu(\ud\xi),
\end{align}
from which one can see that $k(t)$ is a nonincreasing function.
By the dominated convergence theorem and condition \eqref{E:Dalang}, we see that $k$ is continuous on $(0,\infty)$.

Using Lemma \ref{max-principle-lemma} and the definition of the function $k$, we obtain the following estimate for the integral $I_{t}^{(n)}(t_1,\ldots,t_n)$ defined in Lemma \ref{L:Jn}.

\begin{lemma}\label{L:I-bnd}
For any $0<t_1<\ldots<t_n<t=:t_{n+1}$, we have that
\[
I_{t}^{(n)}(t_1,\ldots,t_n)\leq J_{t}^{(n)}(t_1,\ldots,t_n):=\prod_{i=1}^{n} k\left(\frac{2(t_{i+1}-t_i)t_i}{t_{i+1}}\right),
\]
and hence,
$$\int_{0<t_1<\dots<t_n<t} I_t^{(n)}(t_1,\dots,t_n) \ud t_1\dots \ud t_n \leq \int_{0<t_1<\dots<t_n<t} J_t^{(n)}(t_1,\dots,t_n) \ud t_1\dots \ud t_n.$$
\end{lemma}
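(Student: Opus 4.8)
The plan is to bound the multi-dimensional Fourier-side integral defining $I_t^{(n)}(t_1,\dots,t_n)$ by peeling off the integration variables one at a time, starting from $\xi_n$ (the outermost), and using the maximum principle from Lemma~\ref{max-principle-lemma} at each step to eliminate the cross terms coming from the sums $\sum_{j=1}^k t_j\xi_j$. Concretely, fix $0<t_1<\dots<t_n<t=t_{n+1}$ and write
\[
\exp\left\{-\sum_{k=1}^n \frac{t_{k+1}-t_k}{t_{k+1}t_k}\left|\sum_{j=1}^k t_j\xi_j\right|^2\right\}
\]
and observe that $\xi_n$ appears only in the $k=n$ term, in the combination $\left|\sum_{j=1}^{n-1}t_j\xi_j + t_n\xi_n\right|^2 = t_n^2\left|\xi_n + \eta_{n-1}\right|^2$ where $\eta_{n-1} := t_n^{-1}\sum_{j=1}^{n-1}t_j\xi_j$ is a fixed vector once $\xi_1,\dots,\xi_{n-1}$ are frozen. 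Thus the innermost $\mu(\ud\xi_n)$-integral is
\[
\frac{1}{(2\pi)^d}\int_{\bR^d}\exp\left\{-\frac{(t_{n+1}-t_n)t_n^2}{t_{n+1}t_n}\,|\xi_n+\eta_{n-1}|^2\right\}\mu(\ud\xi_n)
= \frac{1}{(2\pi)^d}\int_{\bR^d}e^{-a|\xi_n+\eta_{n-1}|^2}\mu(\ud\xi_n)
\]
with $a = (t_{n+1}-t_n)t_n/t_{n+1}$. Applying \eqref{max-principle2} with $t=1$ (i.e.\ the supremum over the shift $\eta_{n-1}$) bounds this by $\frac{1}{(2\pi)^d}\int_{\bR^d}e^{-a|\xi_n|^2}\mu(\ud\xi_n) = k(2a) = k\!\left(\frac{2(t_{n+1}-t_n)t_n}{t_{n+1}}\right)$, using the definition \eqref{E:k2} of $k$ with the identification that the argument of $k$ matches $a|\xi|^2 = \frac{s|\xi|^2}{2}$ for $s=2a$.

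Having peeled off $\xi_n$ and produced the factor $k\!\left(\frac{2(t_{n+1}-t_n)t_n}{t_{n+1}}\right)$, which no longer depends on $\xi_1,\dots,\xi_{n-1}$, I would iterate: the remaining integrand in $\xi_1,\dots,\xi_{n-1}$ is
\[
\exp\left\{-\sum_{k=1}^{n-1}\frac{t_{k+1}-t_k}{t_{k+1}t_k}\left|\sum_{j=1}^k t_j\xi_j\right|^2\right\},
\]
which has exactly the same structure with $n$ replaced by $n-1$ (and the role of $t=t_{n+1}$ now played by $t_n$). By induction on $n$ — or simply by repeating the peeling argument $n$ times — we obtain
\[
I_t^{(n)}(t_1,\dots,t_n) \le \prod_{i=1}^n k\!\left(\frac{2(t_{i+1}-t_i)t_i}{t_{i+1}}\right) = J_t^{(n)}(t_1,\dots,t_n).
\]
The second assertion of the lemma is then immediate: integrating the pointwise inequality $I_t^{(n)}\le J_t^{(n)}$ over the simplex $\{0<t_1<\dots<t_n<t\}$ (both integrands being non-negative) preserves the inequality.

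The one point requiring a little care — and the main (mild) obstacle — is the bookkeeping in the induction: after peeling off $\xi_n$ one must verify that the leftover exponential is genuinely of the same form with parameters $(t_1,\dots,t_{n-1},t_n)$, so that the maximum principle can be reapplied with the shift $\eta_{n-2} = t_{n-1}^{-1}\sum_{j=1}^{n-2}t_j\xi_j$, and so on. This is purely algebraic and follows because in each term $\left|\sum_{j=1}^k t_j\xi_j\right|^2$ the highest-index variable $\xi_k$ enters only through a shift of $\xi_k$ by a vector built from $\xi_1,\dots,\xi_{k-1}$; the coefficient $t_k^2$ multiplying $|\xi_k+\cdot|^2$ combines with $\frac{t_{k+1}-t_k}{t_{k+1}t_k}$ to give precisely $a_k := \frac{(t_{k+1}-t_k)t_k}{t_{k+1}}$, hence the factor $k(2a_k)$. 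A tacit hypothesis used here is that $f$ (the Fourier transform of $\mu$) is non-negative, which holds by assumption, and that $G(a,\cdot)\in\cS(\bR^d)$ with $G(a,\cdot)*\widetilde{G(a,\cdot)} = G(2a,\cdot)\ge 0$, so Lemma~\ref{max-principle-lemma} applies at each stage; these are exactly the conditions verified in the proof of that lemma.
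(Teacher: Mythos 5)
Your proof is correct and follows essentially the same route as the paper: peel off the $\mu(\ud\xi_i)$-integrals one at a time, use the maximum principle \eqref{max-principle2} to remove the shift $\sum_{j<i}t_j\xi_j$, and identify each resulting integral with $k\left(\frac{2(t_{i+1}-t_i)t_i}{t_{i+1}}\right)$ via \eqref{E:k2}; the integrated inequality then follows by non-negativity. The only difference is cosmetic (you normalize the shift by $t_n$ and apply \eqref{max-principle2} with $t=1$, while the paper applies it directly with $t=t_n$).
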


\begin{proof}
We denote $a_i=(t_{i+1}-t_i)/(t_it_{i+1})$ for all $i=1, \ldots,n$ and we write
\begin{align*}
 I_{t}^{(n)}(t_1,\ldots,t_n)=&\frac{1}{(2\pi)^{nd}}\int_{\bR^d}\mu(\ud\xi_1) \: e^{-a_1|t_1\xi_1|^2}\int_{\bR^d}
 \mu(\ud\xi_2)\: e^{-a_2|t_1\xi_1+t_2\xi_2|^2} \ldots \\
&\times \int_{\bR^d}\mu(\ud\xi_n)\: e^{-a_n|t_1\xi_1+\ldots+t_n\xi_n|^2}.
\end{align*}
For the inner integral, we note that for any $\xi_1, \ldots,\xi_{n-1}\in \bR^d$,
$$\int_{\bR^d}e^{-a_n|t_1\xi_1+\ldots+t_n\xi_n|^2}\mu(\ud\xi_n)\leq \sup_{\eta \in \bR^d}\int_{\bR^d}e^{-a_n|\eta+t_n\xi_n|^2}\mu(\ud\xi_n)=
\int_{\bR^d}e^{-a_n|t_n\xi_n|^2}\mu(\ud\xi_n),$$
by Lemma \ref{max-principle-lemma}. The other integrals are estimated similarly. Hence,
\begin{equation}
\label{E:estim-I}
I_{t}^{(n)}(t_1,\ldots,t_n)\leq \frac{1}{(2\pi)^{nd}}\prod_{i=1}^{n}
\int_{\R^d}\exp\left(-\frac{t_{i+1}-t_i}{t_i t_{i+1}}|t_i\xi_i|^2\right)\mu(\ud\xi_i).
\end{equation}
The conclusion follows by the definition \eqref{E:k2} of the function $k(t)$.
\end{proof}

For $t\ge 0$, denote $h_0(t):=1$ and for $n\ge 1$
\begin{align*}
h_n(t):=\int_0^t h_{n-1}(s) k(t-s)\ud s.
\end{align*}

Note that $h_n(t) \in [0,\infty]$ for all $t \geq 0$ and $h_n$ is nondecreasing (by Lemma 2.6 of \cite{CK15Color}).
Moreover, under Dalang's condition \eqref{E:Dalang}, for any $\beta>0$ and for any integer $n \geq 0$,
\begin{equation}
\label{int-hn}
\int_0^{\infty}e^{-\beta t}h_n(t)\ud t=\frac{1}{\beta}\left(\int_0^{\infty}e^{-\beta t}k(t)\ud t \right)^{n}=\frac{1}{\beta}[2 \Upsilon(2\beta)]^n<\infty.
\end{equation}
Hence $h_n(t)<\infty$ for almost all $t \geq 0$. Since $h_n$ is non-decreasing, it follows that $h_n(t)<\infty$ for all $t \geq 0$.

\begin{lemma}\label{L:Int-I}
For any $t\ge 0$ and for any integer $n \geq 1$, it holds that
\[
\int_{0<t_1<\dots<t_n<t} J_t^{(n)}(t_1,\dots,t_n) \ud t_1\dots \ud t_n  \le
 2^n h_n(t).
\]
\end{lemma}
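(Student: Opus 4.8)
The plan is to bound the simplex integral of $J_t^{(n)}$ by changing variables so that the product $\prod_{i=1}^n k\!\left(\frac{2(t_{i+1}-t_i)t_i}{t_{i+1}}\right)$ turns into something that can be recognized as an $n$-fold iterated convolution of $k$ against the constant function $1$, which is exactly $h_n(t)$ up to the factor $2^n$. First I would observe that on the simplex $0<t_1<\dots<t_n<t=t_{n+1}$ one has $0<t_i<t_{i+1}$, so $\frac{t_i}{t_{i+1}}\in(0,1)$, and hence the argument of $k$ satisfies
\[
\frac{2(t_{i+1}-t_i)t_i}{t_{i+1}} = 2(t_{i+1}-t_i)\cdot\frac{t_i}{t_{i+1}} \le 2(t_{i+1}-t_i).
\]
Since $k$ is nonincreasing (as noted after \eqref{E:k2}), this gives the pointwise bound
\[
J_t^{(n)}(t_1,\dots,t_n) \le \prod_{i=1}^n k\bigl(2(t_{i+1}-t_i)\bigr).
\]

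Next I would perform the substitution $s_i = 2t_i$ for $i=1,\dots,n$, which maps the simplex $\{0<t_1<\dots<t_n<t\}$ to $\{0<s_1<\dots<s_n<2t\}$ with Jacobian $2^{-n}$, and turns $2(t_{i+1}-t_i)$ into $s_{i+1}-s_i$ with $s_{n+1}:=2t$. This yields
\[
\int_{0<t_1<\dots<t_n<t}\prod_{i=1}^n k\bigl(2(t_{i+1}-t_i)\bigr)\,\ud t_1\dots\ud t_n
= 2^{-n}\int_{0<s_1<\dots<s_n<2t}\prod_{i=1}^n k(s_{i+1}-s_i)\,\ud s_1\dots\ud s_n.
\]
The remaining simplex integral is, by a standard unfolding, exactly the iterated convolution $(k * k * \dots * k * \mathbf{1})(2t)$ with $n$ factors of $k$; more precisely, integrating successively in $s_1$, then $s_2$, and so on, and comparing with the recursion defining $h_n$, one gets
\[
\int_{0<s_1<\dots<s_n<2t}\prod_{i=1}^n k(s_{i+1}-s_i)\,\ud s_1\dots\ud s_n = h_n(2t).
\]
I would verify this last identity by induction on $n$: for $n=1$ the left side is $\int_0^{2t} k(2t-s_1)\,\ud s_1 = \int_0^{2t} k(u)\,\ud u = h_1(2t)$ (using $h_0\equiv 1$), and the inductive step comes from peeling off the innermost variable $s_n$ and recognizing the inner integral as $h_{n-1}$ evaluated at $s_n$, so that the outer integral becomes $\int_0^{2t} h_{n-1}(s_n)\,k(2t-s_n)\,\ud s_n = h_n(2t)$ by definition.

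Combining the two displays gives
\[
\int_{0<t_1<\dots<t_n<t} J_t^{(n)}(t_1,\dots,t_n)\,\ud t_1\dots\ud t_n \le 2^{-n}\,h_n(2t),
\]
and since $h_n$ is nondecreasing with $2t \ge t$... wait — this would give $h_n(2t)$ rather than $h_n(t)$, so I should instead be slightly more careful and avoid inflating the time argument. The cleaner route, which I would actually follow, is to substitute $s_i = 2t_i$ only to handle the factor $2$, but note that the argument $2(t_{i+1}-t_i)t_i/t_{i+1}$ is already $\le 2(t_{i+1}-t_i)$ and keep working on the original time interval: set $r_i = 2t_i$ but then restrict back using $k$ nonincreasing and $h_n$ nondecreasing only in the direction that helps. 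The honest bookkeeping is that the factor $2^n$ on the right of the claimed inequality is precisely what absorbs the Jacobian $2^{-n}$ under $s_i=2t_i$ \emph{together with} the fact that the resulting convolution lives on $[0,2t]$; since $h_n(2t)$ need not equal $2^n h_n(t)$ in general, I expect the intended argument keeps the time variable fixed by instead writing $2(t_{i+1}-t_i)t_i/t_{i+1}$ and bounding $k$ of it by exploiting that the map is dominated by a convolution structure on $[0,t]$ directly — i.e., the factor $2$ inside $k$ is pulled out as a \emph{scaling of $k$}, not of time, using that for a nonincreasing $k$, $k(2u) \le k(u)$, which replaces $k(2(t_{i+1}-t_i)t_i/t_{i+1})$ by a quantity bounded through $k$ on arguments $\le t_{i+1}-t_i$.

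**The main obstacle** will be exactly this bookkeeping of the factor $2$: one must decide whether the $2^n$ comes from a genuine dilation of the convolution domain (giving $h_n(2t)$, which then needs $h_n(2t)\le 2^n h_n(t)$, a statement that is \emph{not} generally true) or from repeatedly using monotonicity of $k$ to replace $k(2v)$ by... no, $k(2v)\le k(v)$ goes the wrong way to produce a factor $2^n$. The resolution I anticipate is the substitution $s_i = 2t_i$ with the target being $h_n$ evaluated at $2t$ together with the separate elementary fact, provable from the definition of $h_n$ and a scaling substitution, that $h_n(2t) = 2^n \widehat{h}_n(t)$ where $\widehat{h}_n$ is built from $\widehat{k}(u):=k(2u)$; but since $k$ nonincreasing gives $\widehat k \le k$ pointwise, $\widehat h_n \le h_n$, so $h_n(2t) \le 2^n h_n(t)$ after all. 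Concretely: writing $h_n(2t) = \int_{0<u_1<\dots<u_n<2t}\prod k(u_{i+1}-u_i)\,\ud \mathbf{u}$ with $u_{n+1}=2t$, substitute $u_i = 2v_i$ to get $h_n(2t) = 2^n \int_{0<v_1<\dots<v_n<t}\prod k(2(v_{i+1}-v_i))\,\ud\mathbf{v} \le 2^n\int_{0<v_1<\dots<v_n<t}\prod k(v_{i+1}-v_i)\,\ud\mathbf{v} = 2^n h_n(t)$, using $k(2w)\le k(w)$. Chaining this with the earlier estimate $\int J_t^{(n)} \le 2^{-n} h_n(2t)$ then yields $\int J_t^{(n)} \le h_n(t) \le 2^n h_n(t)$, and in fact directly $\int J_t^{(n)} \le 2^{-n}h_n(2t) \le h_n(t)$, which is even stronger than claimed; the stated $2^n h_n(t)$ is then immediate. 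I would present the argument in the order: (i) pointwise reduction via $k$ nonincreasing to arguments $2(t_{i+1}-t_i)$; (ii) substitution $s_i=2t_i$ and identification of the simplex convolution integral; (iii) induction identifying that integral with $h_n$ of the inflated endpoint; (iv) comparison $h_n(2t)\le 2^n h_n(t)$ if needed, or simply read off the bound with the factor $2^n$ as a crude but sufficient estimate.
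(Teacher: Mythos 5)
Your argument breaks down at step (i). On the simplex you indeed have $\frac{2(t_{i+1}-t_i)t_i}{t_{i+1}} \le 2(t_{i+1}-t_i)$, but $k$ is \emph{nonincreasing}, so a smaller argument produces a \emph{larger} value: this gives
\[
k\left(\frac{2(t_{i+1}-t_i)t_i}{t_{i+1}}\right) \ \ge\ k\bigl(2(t_{i+1}-t_i)\bigr),
\]
which is the reverse of the pointwise domination $J_t^{(n)}\le \prod_{i=1}^n k\bigl(2(t_{i+1}-t_i)\bigr)$ on which everything else in your plan rests. The inequality is not only false pointwise; its integrated form fails as well. Take $n=1$ and the Riesz case $k(u)=u^{-\alpha/2}$: then $\int_0^t k\bigl(2s(t-s)/t\bigr)\ud s = 2^{-\alpha/2}t^{1-\alpha/2}B(1-\alpha/2,1-\alpha/2)$, while your claimed majorant equals $2^{-1}h_1(2t)=2^{-\alpha/2}t^{1-\alpha/2}/(1-\alpha/2)$; the ratio of the two is $\Gamma(1-\alpha/2)\Gamma(2-\alpha/2)/\Gamma(2-\alpha)$, which exceeds $1$ and tends to $2$ as $\alpha\uparrow 2$. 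The real difficulty, which your reduction discards, is the region where $t_i$ is small compared with $t_{i+1}$: there the argument $2(t_{i+1}-t_i)t_i/t_{i+1}\approx 2t_i$ is close to $0$ and $k$ of it can be large, so it cannot be dominated through $k$ evaluated at the increment $t_{i+1}-t_i$. (Your closing observation $h_n(2t)\le 2^n h_n(t)$ is correct, but it cannot repair a false first step; note also that this would be needed in the direction you yourself flag as suspicious.)

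The paper's proof handles precisely this region by a folding (reflection) argument instead of a pointwise bound. The key estimate is \eqref{hn-ineq}:
\[
\int_0^{t} k\left(\frac{2(t-s)s}{t}\right) h_m(s)\,\ud s \ \le\ 2\,h_{m+1}(t),
\]
proved by the change of variable $s\mapsto t-s$, then splitting the integral at $t/2$ and using that $h_m$ is nondecreasing (so $h_m(t-s)\le h_m(s)$ for $s\ge t/2$) to fold the right half onto the left half at the cost of a factor $2$, and finally using that $2s(t-s)/t\ge s$ for $s\in[0,t/2]$ together with $k$ nonincreasing — monotonicity is invoked with the argument bounded \emph{below} by $s$, which is the direction that works and keeps $k$ at an integrable singularity. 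Iterating this estimate through the nested time integrals, innermost variable first, produces the factor $2^n$ and the bound $2^n h_n(t)$. To salvage your write-up, replace step (i) by this folding lemma and run the induction on the iterated integral; the dilation bookkeeping you worry about then never arises.
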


\begin{proof}
We first show that for any $n \geq 0$ and $t \geq 0$,
\begin{equation}
\label{hn-ineq}
\int_0^{t} k\left(\frac{2(t-s)s}{t}\right) h_n(s)\ud s \leq 2 h_{n+1}(t).
\end{equation}

Fix $n\ge 0$.
By symmetry, we see that for any $t\ge0$,
\begin{align*}
\int_0^{t} k\left(\frac{2(t-s)s}{t}\right) h_n(s)\ud s&=
\int_0^{t} k\left(\frac{2s(t-s)}{t}\right) h_n(t-s)\ud s\\
&\leq  \int_{0}^{t/2} k\left(\frac{2s(t-s)}{t}\right)  h_n(t-s) \ud s+
\int_{t/2}^{t} k\left(\frac{2s(t-s)}{t}\right)  h_n(s) \ud s\\
&=
2\int_{0}^{t/2} k\left(\frac{2s(t-s)}{t}\right)  h_n(t-s) \ud s,
\end{align*}
where for the inequality above we used the fact that $h_n$ is non-decreasing and hence $h_n(t-s) \leq h_n(s)$ for $s \geq t/2$.
Because $k(t)$ is nonincreasing and $2s(t-s)/t\ge s$ for $s\in [0,t/2]$, we have that
\begin{align*}
\int_{0}^{t/2} k\left(\frac{2s(t-s)}{t}\right)  h_n(t-s) \ud s
&\le
\int_0^{t/2} k\left(s\right)h_n(t-s)\ud s\le
\int_0^{t} k\left(s \right)h_n(t-s)\ud s=h_{n+1}(t).
\end{align*}
This proves \eqref{hn-ineq}.

Denote $I=\int_{0<t_1<\dots<t_n<t} J_t^{(n)} (t_1,\dots,t_n) \ud t_1\dots \ud t_n$.
By inequality \eqref{hn-ineq},
\begin{align*}
I=&\int_0^t \ud t_n \: k\left(\frac{2(t-t_n)t}{t_n} \right)
\int_0^{t_n}
\ud t_{n-1} \: k\left(\frac{2(t_n-t_{n-1})t}{t_{n-1}} \right)\dots
\\
&\times \dots \int_0^{t_2} \ud t_{1} \: k\left(\frac{2(t_2-t_1)t_2}{t_1}\right)\\
\le&
\: 2 \int_0^t \ud t_n \: k\left(\frac{2(t-t_n)t}{t_n}\right)
\int_0^{t_n}
\ud t_{n-1} \: k\left(\frac{2(t_n-t_{n-1})t}{t_{n-1}}\right)\dots\\
&\times \dots \int_0^{t_3} \ud t_{2} \: k\left(\frac{2(t_3-t_2)t_3}{t_2}\right) h_1(t_2) \leq \ldots \leq \\
\le &  2^{n-1}
\int_0^t \ud t_n \: k\left(t-t_n\right) h_{n-1}(t_n)\\
=& 2^n h_n(t).
\end{align*}
This proves Lemma \ref{L:Int-I}.
\end{proof}

The next lemma gives us more information about $h_n(t)$.

\begin{lemma}
If condition \eqref{E:Upsilon0} holds, then
\begin{equation}
\label{bound-hn}
h_n(t) \leq [2 \Upsilon(0)]^n \quad \mbox{for any $t>0$ and $n \geq 1$}.
\end{equation}
\end{lemma}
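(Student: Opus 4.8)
The plan is a short induction on $n$, preceded by the observation that $k$ is integrable on $(0,\infty)$ with $\int_0^\infty k(t)\,\ud t=2\Upsilon(0)$.

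For that integrability statement I would start from \eqref{E:k2} (equivalently, from \eqref{int-hn} with $n=1$, recalling that $h_1(t)=\int_0^t k(s)\,\ud s$): by Tonelli's theorem,
\[
\int_0^\infty e^{-\beta t}k(t)\,\ud t=\frac{1}{(2\pi)^d}\int_{\R^d}\frac{\mu(\ud\xi)}{\beta+|\xi|^2/2}=2\Upsilon(2\beta),\qquad \beta>0,
\]
and then let $\beta\downarrow 0$. On the left-hand side $e^{-\beta t}\uparrow 1$, so the integral increases to $\int_0^\infty k(t)\,\ud t$ by monotone convergence; on the right-hand side $\Upsilon(2\beta)\uparrow\Upsilon(0)$ because $\beta\mapsto\Upsilon(\beta)$ is nonincreasing, again by monotone convergence in the definition \eqref{E:Dalang}. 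Hypothesis \eqref{E:Upsilon0} guarantees the common value $2\Upsilon(0)$ is finite.

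Then I would prove \eqref{bound-hn} by induction on $n$. The base case $n=1$ is immediate: $h_1(t)=\int_0^t k(s)\,\ud s\le\int_0^\infty k(s)\,\ud s=2\Upsilon(0)$. For the inductive step, assuming $h_{n-1}(s)\le[2\Upsilon(0)]^{n-1}$ for every $s\ge 0$, the recursive definition of $h_n$ together with the change of variable $u=t-s$ yields
\[
h_n(t)=\int_0^t h_{n-1}(s)\,k(t-s)\,\ud s\le[2\Upsilon(0)]^{n-1}\int_0^t k(u)\,\ud u\le[2\Upsilon(0)]^{n-1}\cdot 2\Upsilon(0)=[2\Upsilon(0)]^n,
\]
uniformly in $t\ge 0$, which closes the induction and gives the claim.

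I do not expect any genuine obstacle here: the only step requiring a word of justification is the passage $\beta\downarrow 0$ above, and that is a direct application of the monotone convergence theorem once one records the monotonicity of $\Upsilon$ and invokes \eqref{E:Upsilon0}; everything else is the one-line induction just displayed.
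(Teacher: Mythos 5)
Your proof is correct and follows essentially the same route as the paper: there, too, the whole lemma reduces to $h_1(t)\le\lim_{t\to\infty}h_1(t)=\int_0^\infty k(s)\,\ud s=2\Upsilon(0)$ (obtained by Fubini and monotone convergence directly in $t$, rather than via the Laplace transform and letting $\beta\downarrow 0$ as you do), followed by the same one-line induction on $n$. The difference is only in how the identity $\int_0^\infty k(s)\,\ud s=2\Upsilon(0)$ is verified, so nothing further is needed.
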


\begin{proof} By Fubini's theorem,
\[
h_1(t)=\frac{1}{(2\pi)^d}\int_0^t \int_{\bR^d}e^{-s|\xi|^2/2}\mu(\ud\xi)\ud s= \frac{1}{(2\pi)^d} \int_{\bR^d}\frac{1-e^{-t|\xi|^2/2}}{|\xi|^2/2}\mu(\ud\xi).
\]
Hence, by the monotone convergence theorem,
\[
\lim_{t\rightarrow\infty}h_1(t) =
 \frac{1}{(2\pi)^d} \int_{\bR^d}\frac{\mu(\ud\xi)}{|\xi|^2/2} = 2 \Upsilon (0). 
\]
Because $h_1(t)$ is nondecreasing, the above limit shows that
$h_1(t)\le 2\Upsilon(0)$. The conclusion follows by induction on $n$.
\end{proof}

We need to introduce some additional notation.
For $\gamma\ge 0$ and $t\ge 0$, define
\begin{align}\label{E:H}
 H(t;\gamma):= \sum_{n=0}^\infty \gamma^n h_n(t)\quad
\text{and}\quad
 \widetilde{H}(t;\gamma):= \sum_{n=0}^\infty  \sqrt{\gamma^n h_n(t)}\: .
\end{align}
Note that $H(t;\gamma)\in [0,\infty]$ and $\widetilde{H}(t;\gamma)\in [0,\infty]$  for all $t \geq 0$ and $\gamma \geq 0$. Since $h_n$ is non-decreasing for all $n$,
both $t\mapsto H(t;\gamma)$ and $t\mapsto \widetilde{H}(t;\gamma)$ are nondecreasing.

\begin{lemma}
\label{L:Variation}
For any $t \geq 0$ and $\gamma > 0$, $H(t;\gamma)<\infty$ and $\widetilde{H}(t;\gamma)<\infty$.
For all $\gamma> 0$,
\[
\limsup_{t\rightarrow\infty}\frac{1}{t}\log H(t;\gamma)\le \theta \quad
\text{and}\quad
\limsup_{t\rightarrow\infty}\frac{1}{t}\log \widetilde{H}(t;\gamma)\le \theta,
\]
where this constant $\theta$ can be chosen as
\begin{align}\label{E:theta}
\theta:=\theta(\gamma)=\inf\left\{\beta>0: \: \Upsilon(2\beta) < \frac{1}{2\gamma}\: \right\}.
\end{align}
Moreover, if $\Upsilon(0)<\infty$, then for all $t\ge 0$ and $0<\gamma<1/[2\Upsilon(0)]$,
\[
H(t;\gamma)\le \frac{1}{1-2\gamma \Upsilon(0)}\quad\text{and}\quad
\widetilde{H}(t;\gamma)\le \frac{1}{1-\sqrt{2\gamma \Upsilon(0)}}\:.
\]
\end{lemma}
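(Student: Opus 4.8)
The plan is to reduce everything to the Laplace transform estimate \eqref{int-hn}, namely $\int_0^\infty e^{-\beta t}h_n(t)\,\ud t = \tfrac{1}{\beta}[2\Upsilon(2\beta)]^n$. First I would establish finiteness and the exponential growth rate for $H(t;\gamma)$. Fix $\gamma>0$ and pick any $\beta>\theta(\gamma)$, so that $2\gamma\Upsilon(2\beta)<1$. Since each $h_n$ is nondecreasing, $e^{-\beta t}h_n(t) \le \beta\int_t^\infty e^{-\beta r}h_n(r)\,\ud r \le \beta\int_0^\infty e^{-\beta r}h_n(r)\,\ud r = [2\Upsilon(2\beta)]^n$; hence $h_n(t) \le e^{\beta t}[2\Upsilon(2\beta)]^n$. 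Multiplying by $\gamma^n$ and summing the geometric series gives $H(t;\gamma) \le e^{\beta t}/(1-2\gamma\Upsilon(2\beta)) < \infty$, which in particular yields $\limsup_{t\to\infty}\tfrac1t\log H(t;\gamma)\le\beta$. Letting $\beta\downarrow\theta(\gamma)$ gives the stated bound for $H$.

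Next I would handle $\widetilde H(t;\gamma)$. The summand is $\sqrt{\gamma^n h_n(t)}$, so from the bound $h_n(t)\le e^{\beta t}[2\Upsilon(2\beta)]^n$ just obtained we get $\sqrt{\gamma^n h_n(t)} \le e^{\beta t/2}\big(2\gamma\Upsilon(2\beta)\big)^{n/2}$, and since $2\gamma\Upsilon(2\beta)<1$ the series $\sum_n (2\gamma\Upsilon(2\beta))^{n/2}$ converges; thus $\widetilde H(t;\gamma) \le e^{\beta t/2}/(1-\sqrt{2\gamma\Upsilon(2\beta)}) < \infty$, giving $\limsup_{t\to\infty}\tfrac1t\log\widetilde H(t;\gamma)\le \beta/2$. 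Here I should be slightly careful: the stated conclusion is $\limsup \le \theta$, not $\le\theta/2$, so one can simply note $\beta/2 < \beta$ and let $\beta\downarrow\theta(\gamma)$, or observe that the half-rate bound is in fact stronger. (I would state it as $\le\theta$ to match the lemma, noting the improvement if desired.)

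For the final pair of inequalities, assume $\Upsilon(0)<\infty$ and $0<\gamma<1/[2\Upsilon(0)]$. Here I would invoke \eqref{bound-hn}, the uniform bound $h_n(t)\le [2\Upsilon(0)]^n$ for all $t>0$, which was proved in the lemma immediately preceding. Then $H(t;\gamma) \le \sum_{n\ge0}\gamma^n [2\Upsilon(0)]^n = \sum_{n\ge0}(2\gamma\Upsilon(0))^n = \tfrac{1}{1-2\gamma\Upsilon(0)}$, and likewise $\widetilde H(t;\gamma) \le \sum_{n\ge0}(2\gamma\Upsilon(0))^{n/2} = \tfrac{1}{1-\sqrt{2\gamma\Upsilon(0)}}$, both series converging precisely because $2\gamma\Upsilon(0)<1$. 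The $h_0=1$ term contributes the leading $1$ in each geometric sum, which is consistent.

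The only genuinely delicate point is the passage from the Laplace-transform identity \eqref{int-hn} to the pointwise bound $h_n(t)\le e^{\beta t}[2\Upsilon(2\beta)]^n$; this works solely because $h_n$ is nondecreasing (as recorded before \eqref{int-hn}), which lets us dominate $e^{-\beta t}h_n(t)$ by the tail of the Laplace integral. Everything else is summation of geometric series. I expect no real obstacle beyond bookkeeping the constants $2\gamma\Upsilon(2\beta)<1$ versus $2\gamma\Upsilon(0)<1$ and keeping the square-root version's ratio $\sqrt{2\gamma\Upsilon(\cdot)}<1$ straight.
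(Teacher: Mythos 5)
Your argument is correct, and it takes a genuinely different route from the paper's for the growth-rate bounds. The paper computes the Laplace transform of $H(t;\gamma)$ and $\widetilde H(t;\gamma)$ themselves (using \eqref{int-hn} and, for $\widetilde H$, a Cauchy--Schwarz step to control $\int_0^\infty e^{-\beta t}\sqrt{h_n(t)}\,\ud t$) and then invokes the abstract Lemma \ref{L:LimSupHt} from Appendix \ref{A:lemmas}, whose proof is a somewhat delicate contradiction argument with a sequence of disjoint intervals. You instead convert the Laplace identity \eqref{int-hn} into the pointwise bound $h_n(t)\le e^{\beta t}[2\Upsilon(2\beta)]^n$ via the monotonicity of $h_n$ (the tail-integral trick $e^{-\beta t}h_n(t)\le \beta\int_t^\infty e^{-\beta r}h_n(r)\,\ud r$), and then just sum geometric series; this bypasses Lemma \ref{L:LimSupHt} and the Cauchy--Schwarz step entirely, is more self-contained, and as a bonus delivers the sharper rate $\limsup_t \frac1t\log\widetilde H(t;\gamma)\le\theta(\gamma)/2$, of which the stated bound $\le\theta(\gamma)$ is a trivial consequence. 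The only points worth spelling out are that $\theta(\gamma)<\infty$ (because $\Upsilon(2\beta)\to0$ as $\beta\to\infty$ by dominated convergence under \eqref{E:Dalang}) and that $\Upsilon$ is nonincreasing, so that indeed every $\beta>\theta(\gamma)$ satisfies $2\gamma\Upsilon(2\beta)<1$; both facts are also used implicitly by the paper. Your treatment of the case $\Upsilon(0)<\infty$ via \eqref{bound-hn} and geometric series coincides with the paper's.
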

\begin{proof}
The statements for $H(t;\gamma)$ are proved in Lemma 2.5 in \cite{CK15Color} (with $\nu=1$). We include the argument for the sake of completeness.
Let $\gamma>0$ be arbitrary. By the dominated convergence theorem, $\lim_{\beta \to \infty}\Upsilon(\beta)=0$, Hence, there exists $\beta>0$ such that $2\Upsilon\left(2\beta\right)\gamma <1$. By \eqref{int-hn}, we have:
\begin{align}\label{E:eHsum}
\int_0^{\infty}e^{-\beta t}H(t;\gamma)\ud t=\sum_{n \geq 0}\gamma^n \int_0^{\infty}e^{-\beta t}h_n(t)\ud t=\sum_{n \geq 0} \gamma^n [2\Upsilon(2\beta)]^n<\infty.
\end{align}
Hence $H(t;\gamma)<\infty$ for almost all $t \geq 0$. Since $t \mapsto H(t;\gamma)$ is non-decreasing, it follows that $H(t;\gamma)<\infty$ for all $t \geq 0$.
By Lemma \ref{L:LimSupHt} (Appendix \ref{A:lemmas}), we conclude that
\[
\limsup_{t \to \infty}\frac{1}{t}\log H(t;\gamma) \leq \inf \left\{\beta>0; \int_0^{\infty}e^{-\beta t}H(t;\gamma)dt<\infty\right\}
=\theta(\gamma),
\]
where $\theta(\gamma)$ is defined in \eqref{E:theta} and the last inequality is due to the fact that (thanks to \eqref{E:eHsum})
\[
\int_0^{\infty}e^{-\beta t}H(t;\gamma)dt<\infty\quad\Longleftrightarrow\quad
2\gamma\Upsilon(2\beta)<1.
\]

The results for $\widetilde{H}(t;\gamma)$ are proved similarly.
Notice that, due to the Cauchy-Schwarz inequality and \eqref{int-hn}, for any $\beta>0$,
\begin{align*}
\int_{\R_+} e^{-\beta t} \sqrt{h_n(t)}\ud t
&\le \frac{1}{\sqrt{\beta}} \left(\int_{\R_+} e^{-\beta t} h_n(t)\ud t\right)^{1/2}\\
&=\frac{1}{\beta}\left(\int_{\R_+}e^{-\beta t} k(t) \ud t\right)^{n/2}
=\frac{[2\Upsilon(2\beta)]^{n/2}}{\beta}.
\end{align*}

Therefore, for $\beta>0$ such that $2\Upsilon(2\beta)\gamma<1$, we have
$$\int_{0}^{\infty}e^{-\beta t}\widetilde{H}(t;\gamma)dt=\sum_{n \geq 0}\gamma^{n/2}\int_0^{\infty}e^{-\beta t}\sqrt{h_n(t)} \ud t
=\frac{1}{\beta}\sum_{n \geq 0}\gamma^{n/2}[2\Upsilon(2\beta)]^{n/2}<\infty.$$
Using the same argument as above, we infer that $\widetilde{H}(t;\gamma)<\infty$ for any $t \geq 0$ and $\gamma>0$.
By Lemma \ref{L:LimSupHt} (Appendix \ref{A:lemmas}), we conclude that
\[
\limsup_{t \to \infty}\frac{1}{t}\log \widetilde{H}(t;\gamma) \leq \theta(\gamma).
\]

When $\Upsilon(0)<\infty$, using \eqref{bound-hn}, we have
\begin{align*}
H(t;\gamma) &\leq \sum_{n \geq 0}\gamma^{n}[2\Upsilon(0)]^{n}=\frac{1}{1-2\gamma \Upsilon(0)}
\quad\text{and}\\
\widetilde{H}(t;\gamma) &\leq \sum_{n \geq 0}\gamma^{n/2}[2\Upsilon(0)]^{n/2}=\frac{1}{1-\sqrt{2\gamma \Upsilon(0)}},
\end{align*}
whenever $2\gamma \Upsilon(0)<1$.
This completes the proof of Lemma \ref{L:Variation}.
\end{proof}

\bigskip
We are now ready to prove Theorem \ref{T:ExUni}.

\begin{proof}[Proof of Theorem \ref{T:ExUni}]
(a) We first show the existence (and uniqueness) of the solution. As mentioned at the beginning of this section, this reduces to showing that condition \eqref{series-conv2} holds for any $t \geq 0$ and $x \in \bR^d$. By Lemma \ref{L:Jn}, Lemma \ref{L:I-bnd} and Lemma \ref{L:Int-I},
\[
\E\left(|J_n(t,x)|^2\right)=\frac{1}{n!}\alpha_n(t,x)\leq \lambda^{2n}J_+^2(t,x)\Gamma_t^n 2^n h_n(t).
\]
Hence, by invoking Lemma \ref{L:Variation}, we see that:
$$\sum_{n \geq 0}\frac{1}{n!}\alpha_n(t,x)\leq J_+^2(t,x)\sum_{n \geq 0}\lambda^{2n}
2^n h_n(t)=J_+^2(t,x) H(t; 2 \lambda^2 \Gamma_t)<\infty.$$
This concludes the proof of \eqref{series-conv2}.

Next, we prove \eqref{E:u-p}. Let $p \geq 2$ be arbitrary. Recall that we denote by $\|\cdot\|_p$ the $L^p(\Omega)$-norm. Since the norms $\|\cdot\|_p$ are equivalent on a fixed Wiener chaos $\cH_n$ (see, e.g., \cite[Theorem 5.10]{Janson97}),
\begin{equation}
\label{E:Jn}
\|J_n(t,x)\|_p \leq (p-1)^{n/2}\|J_n(t,x)\|_2 \leq (p-1)^{n/2}|\lambda|^{n}J_+(t,x)\Gamma_t^{n/2} 2^{n/2} \sqrt{h_n(t)}.
\end{equation}

By Minkowski's inequality,
\begin{align}
\label{E:Lp-L2}
\|u(t,x)\|_p & \le \sum_{n \geq 0}\|J_n(t,x)\|_p
  \leq
J_+(t,x)
\sum_{n \geq 0}|\lambda|^n(p-1)^{n/2}
\Gamma_t^{n/2}  2^{n/2} \sqrt{h_n(t)}\\
&= J_+(t,x)\widetilde{H}\left(t;2\lambda^2(p-1)\Gamma_t \right).
\label{E:UpMnt}
\end{align}
By Lemma \ref{L:Variation}, the previous quantity is finite. This concludes the proof of \eqref{E:u-p}.
Relation \eqref{E:sup-Ka} follows since the function $\widetilde{H}(t;\gamma)$ is non-decreasing in $t$ and $\gamma$ and
\begin{equation}
\label{E:sup-J0}
D_a:=\sup_{(t,x) \in K_a}J_{+}(t,x)<\infty.
\end{equation}
Note that \eqref{E:sup-J0} is a consequence of Lemma \ref{L:J0-cont} (Appendix \ref{A:cont-Jn}).

We now prove that $u$ is $L^p(\Omega)$-continuous on $(0,\infty) \times \bR^d$. Note that by Lemmas \ref{L:J0-cont} and \ref{L:Jn-cont} (Appendix \ref{A:cont-Jn}), $u_n=\sum_{k=0}^{n}J_k$ is $L^p(\Omega)$-continuous on $(0,\infty) \times \bR^d$. Let $a>0$ be arbitrary. By relation \eqref{E:Jn}, and the fact that $\Gamma_t$ and $h_n(t)$ are non-decreasing in $t$, we have
\begin{align*}
\sum_{n \geq 0}\sup_{(t,x) \in K_a} \|J_n(t,x)\|_p \leq & D_a \sum_{n \geq 0} (p-1)^{n/2} \Gamma_a^n |\lambda|^n 2^{n/2} \sqrt{h_n(a)}\\
=& D_a \widetilde{H}(a; 2\lambda^2 (p-1)\Gamma_a)<\infty,
\end{align*}
which means that $u_n(t,x) \to u(t,x)$ in $L^p(\Omega)$, uniformly on $K_a$. Hence $u$ is $L^p(\Omega)$-continuous on $K_a$. Since $a>0$ was arbitrary, $u$ is $L^p(\Omega)$-continuous on $(0,\infty) \times \bR^d$.

\bigskip
(b) Since $\Gamma_t \leq \Gamma_{\infty}$ for any $t>0$ and $\widetilde{H}(t;\gamma)$ is non-decreasing in $\gamma$, by \eqref{E:UpMnt} we have that:
\begin{equation}
\label{E:bound-mom-p}
\|u(t,x)\|_p \leq J_+(t,x) \widetilde{H}\left(t;2\lambda^2(p-1)\Gamma_{\infty}  \right).
\end{equation}
The conclusion follows by Lemma \ref{L:Variation}, using the fact that:
\begin{equation}
\label{E:log-J0}
\limsup_{t \to \infty}\frac{1}{t}\log J_+(t,x)=0.
\end{equation}
Note that \eqref{E:log-J0} is a consequence of \eqref{E:expo-u0}. For a proof of this, see page 19 of \cite{CK15Color}.

(c) It is shown in \cite{CK15Color} that $\Upsilon(0)<\infty$
happens only when $d\ge 3$. Let $\gamma=2\lambda^2(p-1)\Gamma_{\infty}$.
By \eqref{E:bound-mom-p} and Lemma \ref{L:Variation},
$$\|u(t,x)\|_p \leq \frac{J_+(t,x)}{1-\sqrt{2\gamma \Upsilon(0)}},$$
provided that $\gamma<1/[2\Upsilon(0)]$.
This last condition is equivalent to
$$|\lambda| \le \left[\frac{1}{4(p-1)\Gamma_\infty \Upsilon(0)}\right]^{1/2}=:\lambda_c.$$
The conclusion follows from \eqref{E:log-J0}.
\end{proof}

\section{The Riesz kernel case}\label{S:Riesz}

In this part, we prove Theorem \ref{T:Riesz}. For this, we build upon our previous estimate \eqref{E:estim-I} for the integral $I_t^{(n)}(t_1, \ldots,t_n)$, using the specific form of the measure $\mu$.
A similar argument can also be found in Example A.1 of \cite{CK15Color}.

In this section, we assume that $f$ is the Riesz kernel of order $0<\alpha<d$, given by:
\begin{equation}
\label{E:def-Riesz}
f(x)=\pi^{-d/2} 2^{-\alpha} \frac{\Gamma(\frac{d-\alpha}{2})}{\Gamma(\frac{\alpha}{2})}|x|^{-\alpha},
\end{equation}
Suppose that $\alpha<2$, so that \eqref{E:Dalang} holds.
It is known that $$\mu(\ud\xi)=|\xi|^{-(d-\alpha)}\ud\xi.$$
Hence, for any $t>0$,
$$\int_{\bR^d}e^{-t|\xi|^2}\mu(\ud\xi)=C_{\alpha,d}^{(1)}t^{-\alpha/2},$$
where $C_{\alpha,d}^{(1)}=\int_{\bR^d}e^{-|\xi|^2}|\xi|^{-(d-\alpha)}\ud\xi$. (This follows by the change of variable $\xi'=t^{1/2} \xi$.)
Using \eqref{E:estim-I}, it follows that
\begin{align}
\nonumber
I_{t}^{(n)}(t_1,\ldots,t_n)&\leq  C_{\alpha,d}^n \left(\frac{t_2-t_1}{t_1 t_2}\cdot \frac{t_3-t_2}{t_2 t_3} \cdot \ldots \cdot \frac{t-t_n}{t_n t}\,t_1^2 t_2^2 \ldots t_n^2 \right)^{-\alpha/2}\\
\label{calcul-I-t}
&= C_{\alpha,d}^n t^{\alpha/2}[t_1 (t_2-t_1)(t_3-t_2)\ldots (t-t_n)]^{-\alpha/2},
\end{align}
where $C_{\alpha,d}=(2\pi)^{-d}C_{\alpha,d}^{(1)}$.
We need the following elementary result.

\begin{lemma}
\label{int-h-lemma}
For any $h>-1$, we have
$$\int_{0<t_1<\ldots<t_n<t}[t_1 (t_2-t_1)(t_3-t_2)\ldots (t-t_n)]^{h}\ud t_1 \ldots \ud t_n=\frac{\Gamma(h+1)^{n+1}}{\Gamma((n+1)(h+1))}t^{n(h+1)+h}.$$
\end{lemma}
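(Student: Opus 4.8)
The plan is to prove this by induction on $n$, recognizing the integral as an iterated convolution of Beta-type kernels on the simplex. The key observation is that the integrand factors along the "gaps" $t_1, t_2 - t_1, \ldots, t - t_n$, so the problem is essentially about the Dirichlet/Beta integral identity
\[
\int_{0<s<t} s^h (t-s)^h \, \ud s = \frac{\Gamma(h+1)^2}{\Gamma(2h+2)} t^{2h+1},
\]
which is valid precisely for $h > -1$ (so that both endpoints are integrable).

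\emph{Base case.} For $n=1$, the claim reads $\int_0^t t_1^h (t-t_1)^h \, \ud t_1 = \frac{\Gamma(h+1)^2}{\Gamma(2h+2)} t^{2h+1}$, which is the Beta function identity $B(h+1,h+1) = \Gamma(h+1)^2/\Gamma(2h+2)$ after the substitution $t_1 = tu$.

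\emph{Inductive step.} Assume the identity holds for $n-1$. For the $n$-variable integral, I would integrate out the innermost variable $t_1$ first (over $0 < t_1 < t_2$): the only factors involving $t_1$ are $t_1^h (t_2 - t_1)^h$, and
\[
\int_0^{t_2} t_1^h (t_2 - t_1)^h \, \ud t_1 = \frac{\Gamma(h+1)^2}{\Gamma(2h+2)}\, t_2^{2h+1}.
\]
The remaining integral over $0 < t_2 < t_3 < \cdots < t_n < t$ then has integrand $\frac{\Gamma(h+1)^2}{\Gamma(2h+2)}\, t_2^{2h+1}(t_3 - t_2)^h \cdots (t - t_n)^h$. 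Writing $2h+1 = h' + h$ with $h' := h + 1 > -1$ doesn't quite match the shape, so instead I would set this up as a clean induction: let $F_n(t)$ denote the left-hand side. Conditioning on $t_1 = s$ gives $F_n(t) = \int_0^t s^h \big(\int_{s < t_2 < \cdots < t_n < t} (t_2 - s)^h (t_3 - t_2)^h \cdots (t - t_n)^h \, \ud t_2 \cdots \ud t_n\big)\, \ud s$. By translation invariance the inner integral equals $F_{n-1}(t - s)$ but with the leading power changed — cleaner is to prove the two-parameter version $\int_{0 < t_1 < \cdots < t_n < t} t_1^{a} (t_2 - t_1)^h \cdots (t - t_n)^h \, \ud\mathbf{t} = \frac{\Gamma(a+1)\Gamma(h+1)^n}{\Gamma(a + n(h+1) + 1)} t^{a + n(h+1)}$ by induction on $n$, then specialize to $a = h$. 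In the induction step one integrates out $t_n$ over $(t_{n-1}, t)$ using the Beta identity $\int_{t_{n-1}}^t (\cdot)^{\ast}(t - t_n)^h \ud t_n$, which after the inductive hypothesis collapses to the next instance via $B(a + (n-1)(h+1) + 1, h+1) = \frac{\Gamma(a + (n-1)(h+1)+1)\Gamma(h+1)}{\Gamma(a + n(h+1) + 1)}$.

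\emph{Main obstacle.} There is no serious obstacle here — the result is a standard Liouville/Dirichlet-integral computation — but the one point requiring care is keeping track of the $\Gamma$-function bookkeeping through the induction and ensuring the hypothesis $h > -1$ is used exactly where needed (namely, to guarantee convergence of each one-dimensional Beta integral at both endpoints, and to keep all arguments of $\Gamma$ in the recursion positive so the Beta identity applies). I would also note that non-negativity of the integrand means Tonelli's theorem justifies the iterated integration and the interchange of order, so no integrability subtleties arise beyond the endpoint condition $h > -1$.
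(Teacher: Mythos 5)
Your proof is correct and is essentially the paper's argument: both reduce the simplex integral to a telescoping product of one-dimensional Beta integrals, the paper integrating from the innermost variable outward (so the exponent of $t_k$ becomes $k(h+1)-1$ at each stage), which is exactly your two-parameter induction with $a$ specialized along the way. One cosmetic slip: in your inductive step the final integration in $t_n$ runs over $(0,t)$ (after the inductive hypothesis has absorbed $t_1,\ldots,t_{n-1}$), not over $(t_{n-1},t)$; the Beta factor $B(a+(n-1)(h+1)+1,\,h+1)$ you quote is the correct one for that reading.
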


\begin{proof}
We write the integral as an iterated integral of the form:
$$\int_0^t (t-t_n)^h \left( \int_{0}^{t_n}(t_n-t_{n-1})^h \ldots \left( \int_0^{t_2}t_1^h (1-t_1)^{h}\ud t_1\right) \ldots \ud t_{n-1}\right)\ud t_n.$$
The inner integral is equal to $B(h+1,h+1)t_2^{2h+1}$, where $B(a,b)=\Gamma(a)\Gamma(b)/\Gamma(a+b)$ is the beta function. The second inner integral is $B(h+1,h+1)B(2(h+1),h+1)t_3^{3h+2}$. We continue in this manner. After $n$ steps, we obtain that the last integral is equal to
$$B(h+1,h+1)B(2(h+1),h+1)\ldots B(n(h+1),h+1)t^{(n+1)h+n}.$$
The conclusion follows from the definition of the beta function.
\end{proof}

Recall that the two-parameter {\it Mittag-Leffler
function} \cite[Section 1.2]{Podlubny99FDE} is defined as follows:
\begin{align}\label{E:Mittag-Leffler}
E_{\alpha,\beta}(z) := \sum_{k=0}^{\infty} \frac{z^k}{\Gamma(\alpha k+\beta)},
\qquad \alpha>0,\;\beta> 0.
\end{align}

\begin{lemma}[Theorem 1.3 p.~32 in \cite{Podlubny99FDE}]\label{L:Eab}
If $0<\alpha<2$, $\beta$ is an arbitrary complex number and $\mu$ is an
arbitrary real number such that
\[
\pi\alpha/2<\mu<\pi \wedge (\pi\alpha)\;,
\]
then for an arbitrary integer $p\ge 1$ the following expression holds:
\[
E_{\alpha,\beta}(z) = \frac{1}{\alpha} z^{(1-\beta)/\alpha}
\exp\left(z^{1/\alpha}\right)
-\sum_{k=1}^p \frac{z^{-k}}{\Gamma(\beta-\alpha k)} + O\left(|z|^{-1-p}\right)
,\quad |z|\rightarrow\infty,\quad |\arg(z)|\le \mu\:.
\]
\end{lemma}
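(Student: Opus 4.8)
The plan is to prove this purely analytic fact about the Mittag--Leffler function by the classical contour-integral method, converting the defining series \eqref{E:Mittag-Leffler} into a Hankel-type integral to which residue calculus and a large-$z$ expansion can be applied; this is the route taken in \cite{Podlubny99FDE}, and I sketch it here for completeness.

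First I would obtain an integral representation. Starting from the Hankel formula for the reciprocal Gamma function,
\[
\frac{1}{\Gamma(s)}=\frac{1}{2\pi i}\int_{\mathrm{Ha}}\e^{\zeta}\,\zeta^{-s}\,\ud\zeta ,
\]
where $\mathrm{Ha}$ is a contour that comes in from $-\infty$ below the negative real axis, encircles the origin once counterclockwise, and returns to $-\infty$ above it, I substitute $s=\alpha k+\beta$ into \eqref{E:Mittag-Leffler}. Placing $\mathrm{Ha}$ so that $|\zeta^{\alpha}|>|z|$ on it makes the interchange of summation and integration legitimate, and summing the geometric series $\sum_k (z\zeta^{-\alpha})^k$ yields
\[
E_{\alpha,\beta}(z)=\frac{1}{2\pi i}\int_{\mathrm{Ha}}\frac{\e^{\zeta}\,\zeta^{\alpha-\beta}}{\zeta^{\alpha}-z}\,\ud\zeta ,
\]
valid for $0<\alpha<2$; this range of $\alpha$ is exactly what lets the rays of $\mathrm{Ha}$ be taken at an angle $\delta\in(\pi\alpha/2,\pi]$ for which the integral converges and the poles are correctly positioned.

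Next I would extract the exponential term as a residue. The integrand has simple poles at the roots of $\zeta^{\alpha}=z$, namely $\zeta_n=z^{1/\alpha}\e^{2\pi i n/\alpha}$. Deforming $\mathrm{Ha}$ inward toward the origin sweeps across those poles lying inside the sector $|\arg\zeta|<\delta$; under $|\arg z|\le\mu$ with $\pi\alpha/2<\mu<\pi\wedge(\pi\alpha)$ the only such pole is the principal root $\zeta_0=z^{1/\alpha}$, for which $|\arg\zeta_0|=|\arg z|/\alpha<\mu/\alpha<\pi$. Since $\zeta^{\alpha}-z$ has derivative $\alpha\zeta^{\alpha-1}$ at $\zeta_0$, its residue is
\[
\frac{\e^{\zeta_0}\,\zeta_0^{\alpha-\beta}}{\alpha\,\zeta_0^{\alpha-1}}
=\frac{1}{\alpha}\,z^{(1-\beta)/\alpha}\exp\!\left(z^{1/\alpha}\right),
\]
which is the leading term of the claimed expansion, so that $E_{\alpha,\beta}(z)$ equals this term plus the integral over a deformed contour $\mathrm{Ha}'$ that stays at a bounded distance from the origin and on which $|\zeta^{\alpha}|<|z|$.

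Finally, on $\mathrm{Ha}'$ I would expand the kernel in inverse powers of $z$,
\[
\frac{1}{\zeta^{\alpha}-z}
=-\sum_{k=0}^{p-1}\frac{\zeta^{\alpha k}}{z^{k+1}}
+\frac{\zeta^{\alpha p}}{z^{p}\,(\zeta^{\alpha}-z)},
\]
and integrate term by term. Because $\mathrm{Ha}'$ is again a Hankel contour about the origin, applying the reciprocal-Gamma formula with $s=\beta-\alpha(k+1)$ turns the $k$-th finite term into $-z^{-(k+1)}/\Gamma(\beta-\alpha(k+1))$, and reindexing reproduces exactly $-\sum_{k=1}^{p}z^{-k}/\Gamma(\beta-\alpha k)$. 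The main obstacle is the remainder: I must show that
\[
\frac{1}{2\pi i}\int_{\mathrm{Ha}'}\frac{\e^{\zeta}\,\zeta^{\alpha p+\alpha-\beta}}{z^{p}\,(\zeta^{\alpha}-z)}\,\ud\zeta=O\!\left(|z|^{-1-p}\right)
\]
uniformly in $\arg z$ over the closed sector $|\arg z|\le\mu$. This requires splitting $\mathrm{Ha}'$ into its circular arc and its two rays, using the decay of $\e^{\zeta}$ along the rays together with the lower bound $|\zeta^{\alpha}-z|\gtrsim|z|$ coming from the separation between $\mathrm{Ha}'$ and the pole locus $\{\zeta^{\alpha}=z\}$. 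Establishing that this separation, and hence all the estimates, is uniform as $\arg z$ ranges over the whole sector is precisely where the quantitative constraint $\pi\alpha/2<\mu<\pi\wedge(\pi\alpha)$ is consumed, and it is the genuinely delicate part of the argument.
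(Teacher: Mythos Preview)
The paper does not prove this lemma at all: it is quoted verbatim from \cite{Podlubny99FDE} and used as a black box in the proof of Theorem~\ref{T:Riesz}. Your sketch via the Hankel integral representation, residue extraction at the principal root $z^{1/\alpha}$, and geometric expansion of $(\zeta^{\alpha}-z)^{-1}$ is exactly the classical argument given in that reference, and the outline is correct.
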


\bigskip
\begin{proof}[Proof of Theorem \ref{T:Riesz}]
Using Lemma \ref{L:Jn}, relation \eqref{calcul-I-t} and Lemma \ref{int-h-lemma}, we have
\begin{align*}
\Norm{u(t,x)}_2^2&=\sum_{n\ge 0}\E\left[J_n^2(t,x)\right]\\
& \leq  \sum_{n\ge 0}\lambda^{2n}\Gamma_t^n J_+^2(t,x)C_{\alpha,d}^n t^{\alpha/2}\int_{0<t_1<\ldots<t_n<t}
[t_1(t_2-t_1)\ldots(t-t_n)]^{-\alpha/2}\ud t_1 \ldots \ud t_n \\
&= \sum_{n\ge 0}\lambda^{2n}\Gamma_t^n J_+^2(t,x) C_{\alpha,d}^n \frac{\Gamma(1-\alpha/2)^{n+1}}{\Gamma((n+1)(1-\alpha/2))}t^{n(1-\alpha/2)}\\
&= J_+^2(t,x)\Gamma(1-\alpha/2) E_{1-\alpha/2,1-\alpha/2}\left(\lambda^2\Gamma_t C_{\alpha,d} \Gamma(1-\alpha/2) t^{1-\alpha/2}\right)\\
&\le C_{\alpha,d}'  J_+^2(t,x) \exp\left(C_{\alpha,d}'' |\lambda|^{4/(2-\alpha)}\Gamma_t^{2/(2-\alpha)}t\right),
\end{align*}
where in the last step we have applied Lemma \ref{L:Eab} (see the proof of Proposition 3.2 of \cite{ChenDalang14FracHeat}
for a similar argument).
The constant $C_{\alpha,d}''$ can be any constant that is strictly bigger than
$[C_{\alpha,d}\Gamma(1-\alpha/2)]^{2/(2-\alpha)}$ and the constant $C_{\alpha,d}'$ is defined as
\begin{align*}
C_{\alpha,d}' &= \Gamma(1-\alpha/2)\:  \sup_{t\ge 0}
\frac{E_{1-\alpha/2,1-\alpha/2}\left(\lambda^{2}\Gamma_t C_{\alpha,d} \Gamma(1-\alpha/2) t^{1-\alpha/2}\right)}{\exp\left(C_{\alpha,d}'' |\lambda|^{4/(2-\alpha)}\Gamma_t^{2/(2-\alpha)}t\right)}\\
&=\Gamma(1-\alpha/2)\:  \sup_{x\ge 0}
\frac{E_{1-\alpha/2,1-\alpha/2}\left(C_{\alpha,d} \Gamma(1-\alpha/2) x^{1-\alpha/2}\right)}{\exp\left(C_{\alpha,d}'' x\right)}<\infty.
\end{align*}
As for the $p$-th moment, by Poicar\'e-type expansions of Gamma function (see \cite[5.11.3 o p. 140]{NIST2010}),
\[
\lim_{x\rightarrow\infty}\frac{\Gamma(x/2)}{\sqrt{\Gamma(x)}} =0.
\]
Hence, for some constant $K_\alpha>0$, $\Gamma((n+1)(1-\alpha/2))^{-1/2} \le K_\alpha \Gamma((n+1)(2-\alpha)/4)^{-1}$ for all $n\ge 0$.
Therefore, from \eqref{E:Lp-L2} and the calculations above, we see that
\begin{align*}
\Norm{u(t,x)}_p
&\le \sum_{n\ge 0}|\lambda|^n (p-1)^{n/2}\Gamma_t^{n/2} J_+(t,x)  C_{\alpha,d}^{n/2}
\frac{\Gamma(1-\alpha/2)^{(n+1)/2}}{\sqrt{\Gamma((n+1)(1-\alpha/2))}}t^{n(2-\alpha)/4}\\
&\le K_\alpha \sum_{n\ge 0}|\lambda|^n (p-1)^{n/2}\Gamma_t^{n/2} J_+(t,x) C_{\alpha,d}^{n/2} \frac{\Gamma(1-\alpha/2)^{(n+1)/2}}{\Gamma((n+1)(2-\alpha)/4)}t^{n(2-\alpha)/4}\\
&= K_\alpha \Gamma(1-\alpha/2)^{1/2}J_+(t,x)
E_{(2-\alpha)/4,(2-\alpha)/4}\left(|\lambda|\sqrt{p\Gamma_t C_{\alpha,d}\Gamma(1-\alpha/2)}\: t^{(2-\alpha)/4}\right)\\
&\le
\widetilde{C}_{\alpha,d}'  J_+(t,x) \exp\left(\widetilde{C}_{\alpha,d}''\: |\lambda|^{4/(2-\alpha)} p^{2/(2-\alpha)} \Gamma_t^{2/(2-\alpha)}t\right),
\end{align*}
where, by the same arguments as above, the constant
$\widetilde{C}_{\alpha,d}''$ is any constant that is strictly bigger than
$[C_{\alpha,d}\Gamma(1-\alpha/2)]^{2/(2-\alpha)}$ and $\widetilde{C}_{\alpha,d}'$ is defined as
\begin{align*}
\widetilde{C}_{\alpha,d}' &= K_\alpha \Gamma(1-\alpha/2)^{1/2}\:  \sup_{t\ge 0}
\frac{E_{(2-\alpha)/4,(2-\alpha)/4}\left(|\lambda|\sqrt{p \Gamma_t C_{\alpha,d} \Gamma(1-\alpha/2)}\: t^{(2-\alpha)/4}\right)}{\exp\left(\widetilde{C}_{\alpha,d}''\:|\lambda|^{4/(2-\alpha)} p^{2/(2-\alpha)} \Gamma_t^{2/(2-\alpha)}t\right)}\\
&=
K_\alpha \Gamma(1-\alpha/2)^{1/2}\:  \sup_{x\ge 0}
\frac{E_{(2-\alpha)/4,(2-\alpha)/4}\left(\sqrt{C_{\alpha,d} \Gamma(1-\alpha/2)}\: x\right)}{\exp\left(\widetilde{C}_{\alpha,d}''\: x^{4/(2-\alpha)}\right)}
<\infty,
\end{align*}
which does not depend on $p$.
This completes the proof of Theorem 3.9.
\end{proof}

\section{H\"older continuity}
\label{S:Holder}

In this section, we give the proof of Theorem \ref{T:Holder}. For this, we need a preliminary result.

\begin{lemma}
\label{L:EquvHoldCond}
Let $\beta\in (0,1)$ be arbitrary. Then
$$\int_{\R^d}\frac{\mu(\ud \xi)}{\left(1+|\xi|^2\right)^\beta}<\infty \quad
\mbox{if and only if} \quad \int_0^1 \frac{k(s)}{s^{1-\beta}} \ud s <\infty,$$
where the function $k$ is defined by \eqref{E:k}.
\end{lemma}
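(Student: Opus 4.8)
The plan is to use the explicit formula \eqref{E:k2} for $k$ together with Fubini's theorem to convert the time integral $\int_0^1 k(s) s^{\beta-1}\,\ud s$ into a spectral integral against $\mu$, and then compare the resulting weight with $(1+|\xi|^2)^{-\beta}$. First I would write
\[
\int_0^1 \frac{k(s)}{s^{1-\beta}}\,\ud s = \frac{1}{(2\pi)^d}\int_0^1 s^{\beta-1}\int_{\R^d} e^{-s|\xi|^2/2}\,\mu(\ud\xi)\,\ud s,
\]
and since the integrand is non-negative, Tonelli's theorem lets me interchange the order of integration, obtaining $\frac{1}{(2\pi)^d}\int_{\R^d} w_\beta(\xi)\,\mu(\ud\xi)$ where $w_\beta(\xi) := \int_0^1 s^{\beta-1} e^{-s|\xi|^2/2}\,\ud s$.

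Next I would estimate $w_\beta(\xi)$ from above and below by a constant multiple of $(1+|\xi|^2)^{-\beta}$, which makes the equivalence of the two integrability conditions immediate. For $|\xi|$ bounded (say $|\xi|\le 1$), the factor $e^{-s|\xi|^2/2}$ is bounded above by $1$ and below by $e^{-1/2}$ on $s\in[0,1]$, so $w_\beta(\xi)$ is bounded above and below by positive constants, matching $(1+|\xi|^2)^{-\beta}$ up to constants. For $|\xi|$ large, I would substitute $u = s|\xi|^2/2$, giving
\[
w_\beta(\xi) = \left(\frac{2}{|\xi|^2}\right)^{\beta}\int_0^{|\xi|^2/2} u^{\beta-1} e^{-u}\,\ud u,
\]
and the remaining integral increases to the finite constant $\Gamma(\beta)$ as $|\xi|\to\infty$ while being bounded below by $\int_0^{1/2} u^{\beta-1}e^{-u}\,\ud u>0$ for $|\xi|\ge 1$. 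Hence $w_\beta(\xi) \asymp |\xi|^{-2\beta} \asymp (1+|\xi|^2)^{-\beta}$ for $|\xi|\ge 1$. Combining the two regimes yields two-sided bounds $c_\beta (1+|\xi|^2)^{-\beta} \le w_\beta(\xi) \le C_\beta (1+|\xi|^2)^{-\beta}$ for all $\xi\in\R^d$, with $0<c_\beta\le C_\beta<\infty$.

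Finally I would conclude: integrating these two-sided pointwise bounds against the (non-negative) measure $\mu$ shows that $\int_{\R^d} w_\beta(\xi)\,\mu(\ud\xi)<\infty$ if and only if $\int_{\R^d}(1+|\xi|^2)^{-\beta}\,\mu(\ud\xi)<\infty$, which together with the Tonelli identity above is exactly the claimed equivalence. I do not anticipate a serious obstacle here; the only mild care needed is the behavior of $w_\beta$ near $\xi=0$ (where the $u$-substitution degenerates) and the verification that the Gamma-type integral $\int_0^{|\xi|^2/2} u^{\beta-1}e^{-u}\,\ud u$ is genuinely bounded away from $0$ and $\infty$ uniformly for $|\xi|\ge 1$ — both handled by splitting into $|\xi|\le 1$ and $|\xi|\ge 1$ as above. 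One should also note in passing that the finiteness of $k(s)$ for $s>0$, hence the applicability of \eqref{E:k2}, is guaranteed by Dalang's condition \eqref{E:Dalang}, which is in force throughout.
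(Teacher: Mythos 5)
Your proposal is correct and follows essentially the same route as the paper: apply Tonelli to \eqref{E:k2}, reduce to the weight $\int_0^1 s^{\beta-1}e^{-s|\xi|^2/2}\,\ud s$, and establish two-sided bounds comparing it with $(1+|\xi|^2)^{-\beta}$ (the paper phrases this via continuity of $g(x)(1+x)^{\beta}$ with $g(x)=x^{-\beta}\int_0^x s^{\beta-1}e^{-s}\ud s$, whereas you split into $|\xi|\le 1$ and $|\xi|\ge 1$, which is only a cosmetic difference).
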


\begin{proof}
In the proof, we use $c$ and $C$ to denote general constants whose values may be different at each occurrence.
By the definition of $k(s)$, we have that
\begin{align*}
\int_0^1 s^{-(1-\beta)} k(s)\ud s &=
C \int_{\R^d}\mu(\ud \xi)\int_0^1 s^{-(1-\beta)} \exp\left(-\frac{s}{2}|\xi|^2\right)\ud s\\
&=C
\int_{\R^d}\mu(\ud \xi)\: \left(\frac{|\xi|^2}{2}\right)^{-\beta}
\int_0^{\frac{|\xi|^2}{2}} s^{-(1-\beta)} e^{-s} \ud s.
\end{align*}
Denote $g(x):= x^{-\beta}\int_0^{x} s^{-(1-\beta)} e^{-s} \ud s$.
Because
\[
\lim_{x\rightarrow 0_+}g(x)(1+x)^\beta = 1/\beta
\quad\text{and}\quad
\lim_{x\rightarrow \infty} g(x)(1+x)^\beta = \Gamma(\beta),
\]
both functions $g(x)(1+x)^\beta$ and $\left[g(x)(1+x)^\beta\right]^{-1}$
are continuous functions on $[0,\infty]$. Hence,
\[
\frac{c}{(1+x)^{\beta}} \le g(x) \le \frac{C}{(1+x)^{\beta}} \qquad\text{for all $x\ge 0$}.
\]
Therefore,
\[
c \int_{\R^d}\frac{\mu(\ud \xi)}{\left(1+|\xi|^2\right)^\beta}
\le \int_0^1 s^{-(1-\beta)} k(s)\ud s
\le C \int_{\R^d}\frac{\mu(\ud \xi)}{\left(1+|\xi|^2\right)^\beta}.
\]
This completes the proof of Lemma \ref{L:EquvHoldCond}.
\end{proof}

\begin{remark}
{\rm Let $f$ be the Riesz kernel with $\alpha\in (0,d \wedge 2)$ given by \eqref{E:def-Riesz}. Example A.1 of \cite{CK15Color} shows that in this case, $k(t)= C t^{-\alpha/2}$.
In this case, $\int_0^1 \frac{k(s)}{s^{1-\beta}} \ud s <\infty$
for all $\beta\in (\alpha/2,1)$.}
\end{remark}

\bigskip
\begin{proof}[Proof of Theorem \ref{T:Holder}]: We proceed as in the proof of Theorem 4.3 of \cite{balan-song16}, using the bounds obtained in the proof of Lemma \ref{L:Jn-cont} (Appendix \ref{A:cont-Jn}). Let $\theta=1-\beta$.

\vspace{3mm}

{\em Step 1. (left increments in time)} Let $(t,x),(t',x) \in K_a$. Say $t'=t-h$ for some $h>0$. We have
\begin{eqnarray}
\nonumber
\|u(t-h,x)-u(t,x)\|_p &\leq & \sum_{n\geq 0}(p-1)^{n/2} \|J_n(t-h,x))-J_n(t,x)\|_2 \\
\label{moment-time}
&\leq &\sum_{n\geq 0} (p-1)^{n/2} \left(\frac2{n!}\left[ A_n'(t,h,x)+B_n'(t,h,x)\right] \right)^{1/2},
\end{eqnarray}
where $A_n'(t,h,x)$ and $B_n'(t,h,x)$ are given by \eqref{def-A'}, respectively \eqref{def-B'}.

To find an upper bound for $A'_n(t,h,x)$, we use \eqref{E:bound-An}.
Notice that
\begin{align*}
 \exp\left(-\frac{t-h-t_n}{(t-h)t_n}\left|\sum_{j=1}^n t_j \xi_j\right|^2\right)
&\left[1-\exp\left(-\frac{h}{2t(t-h)}\left|\sum_{j=1}^n t_j \xi_j\right|^2\right)\right]^{\theta}\\
\le&
\exp\left(-\frac{t-h-t_n}{(t-h)t_n}\left|\sum_{j=1}^n t_j \xi_j\right|^2\right)
\left(\frac{h}{2t(t-h)}\right)^\theta\left|\sum_{j=1}^n t_j \xi_j\right|^{2\theta} \\
\le&
\frac{h^\theta}{t^{2\theta}} \exp\left(-\frac{t-h-t_n}{(t-h)t_n}\left|\sum_{j=1}^n t_j \xi_j\right|^2\right)
\left|\sum_{j=1}^n t_j \xi_j\right|^{2\theta}.
\end{align*}

Now for $A>0$ and $x\ge 0$, we see that
\[
\exp\left(-\frac{A}{2} x^2\right) x^{2\theta}
=\exp\left(-\frac{A}{2} x^2 + 2\theta \log x\right)
\le (2\theta/e)^\theta A^{-\theta}.
\]
This can be seen by noticing that the function $f(x)=-\frac{A}{2}x^2 +2\theta \log x, x>0$ attains its maximum at $x_0=\sqrt{2\theta/A}$.
Therefore, for some constant $C_\theta>0$ depending on $\theta$,
\begin{equation}
\label{E:expo-A}
\exp\left(-A x^2\right) x^{2\theta}\le
C_\theta A^{-\theta} \exp\left(-\frac{A}{2} x^2\right),\quad\text{for all $x\ge 0$.}
\end{equation}

Hence, this inequality implies that
\begin{align*}
\exp\left(-\frac{t-h-t_n}{(t-h)t_n}\left|\sum_{j=1}^n t_j \xi_j\right|^2\right)
\left|\sum_{j=1}^n t_j \xi_j\right|^{2\theta}
& \le C_\theta
\left(\frac{(t-h)t_n}{t-h-t_n}\right)^\theta
\exp\left(-\frac{t-h-t_n}{2(t-h)t_n}\left|\sum_{j=1}^n t_j \xi_j\right|^2\right).
\end{align*}
Thus, by denoting $t_{n+1}:=t-h$, and using \eqref{E:bound-An}, we have that
\begin{align*}
 A_n'(t,h,x) \le  & \Gamma_t^n \lambda^{2n} J_{+}^2(t,x) n!\frac{1}{(2\pi)^{nd}}
\int_{0<t_1<\ldots<t_n<t-h} \int_{\bR^{nd}} \prod_{k=1}^{n}\exp \left(-\frac{t_{k+1}-t_k}{2t_k t_{k+1}} \left|\sum_{j=1}^{k}t_j \xi_j \right|^2\right)\\
& \times \frac{C_\theta\: h^\theta}{(t-h-t_n)^\theta}
 \mu(\ud \xi_1) \ldots \mu(\ud \xi_n) \ud t_1 \ldots \ud t_n.
\end{align*}

Using \eqref{E:half-tn}, we have
\begin{align*}
& \hspace{-3em}\int_{0<t_1<\ldots<t_n<t-h} (t-h-t_n)^{-\theta} \prod_{k=1}^{n-1}
\exp\left(-\frac{1}{2} \frac{t_{k+1}-t_k}{t_{k+1}t_k}\left|\sum_{j=1}^k t_j \xi_j\right|^2\right) \\
& \qquad \times \exp\left(-\frac{1}{2} \frac{t-h-t_n}{(t-h)t_n}\left|\sum_{j=1}^n t_j \xi_j\right|^2\right) \ud t_1 \ldots \ud t_n \\
=  & 2^{-\theta}\int_{0<t_1<\ldots<t_n<t-h} \left(\frac{t-h}{2}-\frac{t_n}{2}\right)^{-\theta} \prod_{k=1}^{n-1}
\exp\left(-\frac{\frac{t_{k+1}}{2}-\frac{t_k}{2}}{\frac{t_{k+1}}{2}\frac{t_k}{2}}
\left|\sum_{j=1}^k \frac{t_j}{2} \xi_j\right|^2\right) \\
&  \qquad \times \exp\left(- \frac{\frac{t-h}{2}-\frac{t_n}{2}}{\frac{t-h}{2}\frac{t_n}{2}}\left|\sum_{j=1}^n
\frac{t_j}{2} \xi_j\right|^2\right) \ud t_1 \ldots \ud t_n\\
=  & 2^{-\theta} 2^n \int_{0<t_1'<\ldots<t_n'<\frac{t-h}{2}} \left(\frac{t-h}{2}-t_n'\right)^{-\theta} \prod_{k=1}^{n-1}
\exp\left(-\frac{t_{k+1}'-t_k'}{t_{k+1}'t_k'}
\left|\sum_{j=1}^k t_j' \xi_j\right|^2\right) \\
&  \qquad \times \exp\left(- \frac{\frac{t-h}{2}-t_n'}{\frac{t-h}{2}t_n'}\left|\sum_{j=1}^n
t_j'\xi_j\right|^2\right) \ud t_1' \ldots \ud t_n',
\end{align*}
where the second equality follows from the change of variables $t_k'=t_k/2$ for $k=1,\ldots,n$. Using the notation $t_{n+1}=\frac{t-h}{2}$, we see that
\begin{align*}
 & A_n'(t,h,x)\\
 \le  & \Gamma_t^n \lambda^{2n} J_{+}^2(t,x) n! 2^{-\theta}\frac{1}{(2\pi)^{nd}}
\int_{0<t_1<\ldots<t_n<\frac{t-h}{2}} \int_{\bR^{nd}} \prod_{k=1}^{n}\exp \left(-\frac{t_{k+1}-t_k}{t_k t_{k+1}} \left|\sum_{j=1}^{k}t_j \xi_j \right|^2\right)\\
& \times 2^n C_\theta\: h^\theta \left(\frac{t-h}{2}-t_n\right)^{-\theta}
 \mu(\ud \xi_1) \ldots \mu(\ud \xi_n) \ud t_1 \ldots \ud t_n\\
 =&
\Gamma_t^n \lambda^{2n} J_{+}^2(t,x) n! 2^{-\theta} 2^n C_\theta h^\theta
\int_{0<t_1<\ldots<t_n<\frac{t-h}{2}} I_{\frac{t-h}{2}}^{(n)}(t_1,\dots,t_n) \left(\frac{t-h}{2}-t_n\right)^{-\theta} \ud t_1\dots\ud t_n\\
\le&
\Gamma_t^n \lambda^{2n} J_{+}^2(t,x) n! 2^{-\theta} 2^n C_\theta h^\theta
\int_{0<t_1<\ldots<t_n<\frac{t-h}{2}}
J_{\frac{t-h}{2}}^{(n)}(t_1,\dots,t_n) \left(\frac{t-h}{2}-t_n\right)^{-\theta} \ud t_1\dots\ud t_n\\
=&
\Gamma_t^n \lambda^{2n} J_{+}^2(t,x) n! 2^{-\theta} 2^n C_\theta h^\theta \int_0^{\frac{t-h}{2}}\left(\frac{t-h}{2}-t_n\right)^{-\theta} k\left(\frac{2(\frac{t-h}{2}-t_n)t_n}{\frac{t-h}{2}} \right) \\
&
\times \left(\int_{0<t_1<\ldots<t_{n-1}<t_n} J_{t_n}^{(n-1)} (t_1, \ldots,t_{n-1})
 \ud t_1\dots\ud t_{n-1}\right) \ud t_n \\
\le&
\Gamma_t^n \lambda^{2n} J_{+}^2(t,x) n! 2^{-\theta} 2^n C_\theta h^\theta
2^{n-1}
\int_{0}^{\frac{t-h}{2}}h_{n-1}(t_n) \left(\frac{t-h}{2}-t_n\right)^{-\theta}
k\left(\frac{2(\frac{t-h}{2}-t_n)t_n}{\frac{t-h}{2}}\right) \ud t_n,
\end{align*}
where $I_t^{(n)}$ and $J_t^{(n)}$ are defined in Lemma 3.3 and Lemma 3.5,
and for the last inequality above we used Lemma 3.5 

We claim that for any $t>0$ and $n \geq 0$,
\begin{equation}
\label{hn-ineq2}
\int_{0}^{t}(t-s)^{-\theta} k\left( \frac{2(t-s)s}{t}\right)h_n(s) \ud s \leq 2
\int_0^t s^{-\theta} k(s) h_n(t-s)\ud s.
\end{equation}
This is proved similarly to (3.11): 
\begin{align*}
& \int_0^{t} (t-s)^{-\theta}k\left(\frac{2(t-s)s}{t}\right) h_n(s)\ud s =
\int_0^{t} s^{-\theta} k\left(\frac{2s(t-s)}{t}\right) h_n(t-s)\ud s\\
& \leq \int_0^{t/2} s^{-\theta} k\left(\frac{2s(t-s)}{t}\right) h_n(t-s)\ud s+\int_{t/2}^{t} s^{-\theta} k\left(\frac{2s(t-s)}{t}\right) h_n(s)\ud s\\
&=  \int_{0}^{t/2} s^{-\theta}k\left(\frac{2s(t-s)}{t}\right)  h_n(t-s) \ud s+
\int_{0}^{t/2} (t-s)^{-\theta} k\left(\frac{2s(t-s)}{t}\right)  h_n(t-s) \ud s\\
&\le
2\int_{0}^{t/2} s^{-\theta}k\left(\frac{2s(t-s)}{t}\right)  h_n(t-s) \ud s,
\end{align*}
where for the first inequality above we used the fact that $h_n$ is non-decreasing and hence $h_n(t-s) \leq h_n(s)$ for $s \geq t/2$, and for the last inequality we used the fact that $(t-s)^{-\theta} \leq s^{-\theta}$ for $s \in [0,t/2]$.
Because $k(t)$ is nonincreasing and $2s(t-s)/t\ge s$ for $s\in [0,t/2]$, we have that
\begin{align*}
\int_{0}^{t/2} s^{-\theta}k\left(\frac{2s(t-s)}{t}\right)  h_n(t-s) \ud s
&\le
\int_0^{t/2} s^{-\theta}k\left(s\right)h_n(t-s)\ud s\le
\int_0^{t} s^{-\theta} k\left(s \right)h_n(t-s)\ud s.
\end{align*}
This proves \eqref{hn-ineq2}.
We use inequality \eqref{hn-ineq2} with $\frac{t-h}{2}$ instead of $t$. We obtain:
\begin{align*}
 A_n'(t,h,x) \le &
\Gamma_t^n \lambda^{2n} J_{+}^2(t,x) n! 2^{-\theta} 2^{2n} C_\theta h^\theta
\int_{0}^{\frac{t-h}{2}} s^{-\theta} k(s) h_{n-1}\left(\frac{t-h}{2}-s\right)
\ud s.
\end{align*}
Using \eqref{E:sup-J0} and the fact that $h_n$ is nondecreasing, we obtain:
\begin{equation}
\label{E:Holder:An'}
 A_n'(t,h,x) \le
 \Gamma_a^n (2\lambda)^{2n} D_a n! 2^{-\theta} C_\theta h^\theta
h_{n-1}(a) \int_{0}^{a}  \frac{k(s)}{s^\theta}
\ud s.
\end{equation}

We now treat the term $B_n'(t,h,x)$. We will use \eqref{E:bound-Bn'}. Note that $1/t \geq 1/a$ and $t-s \geq t-h \geq 1/a$ for any $s \in [0,h]$. Since $k$ is non-increasing,
\begin{align*}
\int_{0}^h k\left(\frac{2s(t-s)}{t} \right)\ud s \leq & \int_{0}^h k\left(\frac{2s}{a^2} \right)\ud s \leq h^{-\theta}\int_0^h k\left(\frac{2s}{a^2} \right)\ud s=h^{\theta} \left(\frac{a^2}{2} \right)^{1-\theta}\int_0^{2h/a^2}\frac{k(s)}{s^{\theta}}\ud s.
\end{align*}
Using \eqref{E:bound-Bn'} and \eqref{E:sup-J0}, we obtain:
\begin{equation}
\label{E:Holder:Bn'}
B_n'(t,h,x) \leq \Gamma_a^n \lambda^{2n}D_a n! \, 2^{n-1}h_{n-1}(a) h^{\theta} \left(\frac{a^2}{2} \right)^{1-\theta}\int_0^{2/a}\frac{k(s)}{s^{\theta}}\ud s.
\end{equation}

Combining \eqref{moment-time}, \eqref{E:Holder:An'} and \eqref{E:Holder:Bn'}, it follows that
$$\|u(t-h,x)-u(t,x)\|_p \leq C h^{\theta/2}\widetilde{H}(a;\gamma),$$
 where $C>0$ is a constant depending on $a$ and $\beta$, and $\gamma$ is a constant depending on $p,a,\lambda$.

{\bigskip\em Step 2. (right increments in time)}
For $h>0$, we have
\begin{align}
\nonumber
\|u(t+h,x)-u(t,x)\|_p \leq & \sum_{n\geq 0}(p-1)^{n/2} \|J_n(t+h,x))-J_n(t,x)\|_2 \\
\label{moment-time-r}
\leq &\sum_{n\geq 0} (p-1)^{n/2} \left(\frac2{n!}\left[ A_n(t,h,x)+B_n(t,h,x)\right] \right)^{1/2},
\end{align}
where $A_n(t,h,x)$ and $B_n(t,h,x)$ are given by \eqref{def-A} and \eqref{def-B}, respectively.
These terms are treated similarly to $A_n'(t,h,x)$ and $B_n'(t,h,x)$ as above. We omit the details.

{\bigskip\em Step 3. (increments in space)} Let $(t,x)$ and $(t,x') \in K_a$. Say $x'=x+z$ for some $z \in \bR^d$. Then
\begin{align*}
\|u(t,x+z)-u(t,x)\|_p  \leq & \sum_{n \geq 0}(p-1)^{n/2} \|J_n(t,x+z)-J_n(t,x)\|_2\\
=& \sum_{n \geq 0}(p-1)^{n/2}\left(\frac{1}{n!}C_n(t,x,z) \right)^{1/2} \\
\leq & \sum_{n \geq 0}(p-1)^{n/2}\left(\frac{2}{n!}[C_n^{(1)}(t,x,z)+C_n^{(2)}(t,x,z)] \right)^{1/2},
\end{align*}
where $C_n(t,x,z)$, $C_n^{(1)}(t,x,z)$ and $C_n^{(2)}(t,x,z)$ are given by \eqref{def-C}, \eqref{def-C1} and \eqref{def-C2}, respectively.

Notice that for some constant $K_\theta>0$,
\[
\left|1-e^{i x}\right|^2 = 2(1-\cos(x)) \le K_\theta |x|^{2\theta},\quad\text{for all $x\in\R$.}
\]
Hence,
\begin{align*}
\left|1-\exp\left(-\frac{i}{t}\left(\sum_{j=1}^n t_j \xi_j\right)\cdot z\right)\right|^2\le&
K_{\theta} \left|\frac{(\sum_{j=1}^n t_j \xi_j) \cdot z}{t}\right|^{2\theta} \le
K_{\theta} \frac{|z|^{2\theta}}{t^{2\theta}} \left|\sum_{j=1}^n t_j \xi_j\right|^{2\theta}.
\end{align*}
From this, it follows that
\begin{align*}
C_n^{(1)}(t,x,z)  \leq &  \frac{\Gamma_t^n \lambda^{2n}J_{+}^2(t,x+z)n!}{(2\pi)^{nd}} \int_{0<t_1<\ldots<t_n<t} \int_{\bR^{nd}} \prod_{k=1}^{n}
\exp \left(- \frac{t_{k+1}-t_{k}}{t_{k}t_{k+1}}\left|\sum_{j=1}^{k}t_{j}\xi_{j}\right|^2 \right)\\
 &\times K_{\theta} \frac{|z|^{2\theta}}{t^{2\theta}} \left|\sum_{j=1}^{n}t_j \xi_j\right|^{2\theta}\mu(\ud \xi_1) \ldots \mu_n(\ud \xi_n) \ud t_1 \ldots \ud t_n.
\end{align*}
Now we bound the inner-most terms of the product using inequality \eqref{E:expo-A}:
\begin{align*}
\exp\left(-\frac{t-t_n}{t t_n}|\sum_{j=1}^{n}t_j \xi_j|^2 \right) \left|\sum_{j=1}^{n}t_j \xi_j\right|^{2\theta}  \leq & C_{\theta} \left(\frac{t-t_n}{t t_n} \right)^{-\theta} \exp\left(-\frac{t-t_n}{2t t_n}\left|\sum_{j=1}^{n}t_j \xi_j\right|^2 \right)\\
 \leq & C_{\theta} t^{2\theta} (t-t_n)^{-\theta}\exp \left(-\frac{t-t_n}{2t t_n}\left|\sum_{j=1}^{n}t_j \xi_j\right|^2 \right).
\end{align*}
Using arguments similar to those used for $A_n'(t,h,x)$ above, we obtain that
\[
C^{(1)}_n(t,x,z)\le
\Gamma_a^n (2\lambda)^{2n} D_a n! 2^{-\theta} K_{\theta}C_\theta |z|^{2\theta}
h_{n-1}(a) \int_{0}^{a}  \frac{k(s)}{s^\theta}.
\]

Finally, to treat $C_n^{(2)}(t,x,z)$, we use \eqref{def-C2}. Recalling the definition \eqref{E:def-F} of $F(t,x,z)$, the conclusion follows from the following inequality, given by Lemma 4.1 of \cite{chen-huang16}:
\begin{align}\label{E:GG-x}
 \left|G(t,x)-G(t,y)\right|\le \frac{1}{t^{\theta/2}}\left[G(2t,x)+G(2t,y)\right]|x-y|^{\theta}.
\end{align}
This completes the proof of Theorem \ref{T:Holder}.
\end{proof}
\appendix

\section{A technical lemma}
\label{A:lemmas}

\begin{lemma}\label{L:LimSupHt}
If $H:[0,\infty) \to [0,\infty)$ is a nondecreasing function such that
\[
\gamma:=\inf\left\{\beta>0:\: \int_0^\infty \e^{-\beta t} H(t)\ud t <\infty\right\}<\infty,
\]
then
\[
\limsup_{t\rightarrow\infty}\frac{1}{t}\log H(t)\le \gamma.
\]
\end{lemma}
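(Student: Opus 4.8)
The plan is to prove that for every $\beta>\gamma$ one has $\limsup_{t\to\infty}\tfrac1t\log H(t)\le\beta$; letting $\beta\downarrow\gamma$ then yields the statement.

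First I would observe that the set $\{\beta>0:\int_0^\infty \e^{-\beta t}H(t)\,\ud t<\infty\}$ is a right half-line: since $H\ge 0$, convergence of the integral for some $\beta_0$ forces convergence for every $\beta\ge\beta_0$ (the integrand only decreases). Consequently, for any fixed $\beta>\gamma$ the constant
\[
C_\beta:=\int_0^\infty \e^{-\beta t}H(t)\,\ud t
\]
is finite. This is the only place where one needs a small word of care, namely that finiteness is available for \emph{all} $\beta>\gamma$, not merely for $\beta$ arbitrarily close to the infimum.

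Next comes the key step, which exploits the monotonicity of $H$. For $t\ge 1$, bounding $H(s)\ge H(t-1)$ on $[t-1,t]$ gives
\[
C_\beta\ge\int_{t-1}^{t}\e^{-\beta s}H(s)\,\ud s\ge H(t-1)\int_{t-1}^{t}\e^{-\beta s}\,\ud s=H(t-1)\,\frac{\e^{\beta}-1}{\beta}\,\e^{-\beta t}.
\]
Rearranging and setting $u=t-1\ge 0$, this yields $H(u)\le \dfrac{\beta\,\e^{\beta}}{\e^{\beta}-1}\,C_\beta\,\e^{\beta u}$ for all $u\ge 0$ (trivially true wherever $H$ vanishes, in which case $\log H(u)=-\infty$ and the bound is vacuous). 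Taking logarithms, dividing by $u$, and letting $u\to\infty$ gives $\limsup_{u\to\infty}\tfrac1u\log H(u)\le\beta$.

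Finally, since $\beta>\gamma$ was arbitrary, I would take the infimum over such $\beta$ to conclude $\limsup_{t\to\infty}\tfrac1t\log H(t)\le\gamma$. There is essentially no genuine obstacle in this argument; it is a standard Tauberian-type comparison, and the monotonicity of $H$ does all the work in converting the integrability statement into a pointwise exponential bound.
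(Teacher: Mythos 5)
Your proof is correct, and it takes a genuinely different (and more streamlined) route than the paper. You argue directly: for each $\beta>\gamma$ you note $C_\beta=\int_0^\infty \e^{-\beta t}H(t)\,\ud t<\infty$ (using that the set of admissible $\beta$ is an upper half-line), and then the unit-window comparison $C_\beta\ge\int_{t-1}^t \e^{-\beta s}H(s)\,\ud s\ge H(t-1)\frac{\e^\beta-1}{\beta}\e^{-\beta t}$ converts integrability into the pointwise bound $H(u)\le \frac{\beta \e^\beta}{\e^\beta-1}C_\beta\,\e^{\beta u}$, from which the limsup bound follows by letting $\beta\downarrow\gamma$. The paper instead proceeds by contradiction: assuming $\limsup_{t\to\infty}t^{-1}\log H(t)>\gamma$, it extracts a sequence $t_n\uparrow\infty$ with $H(t_n)\ge \e^{(\gamma+\epsilon_0)t_n}$, uses monotonicity to show $H$ stays at least $\e^{(\gamma+\epsilon_0/2)t_n}$ on disjoint intervals $[t_n,s_n^*]$ with $s_n^*=\bigl(1+\tfrac{\epsilon_0}{2\gamma+\epsilon_0}\bigr)t_n$, and sums the contributions of these intervals to force $\int_0^\infty \e^{-(\gamma+\epsilon_0/2)t}H(t)\,\ud t=\infty$, contradicting the definition of $\gamma$. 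Both arguments hinge on monotonicity of $H$ to pass from integral to pointwise information, but yours is more elementary, avoids the interval construction, and yields an explicit quantitative exponential bound $H(u)\le C_{\beta}'\,\e^{\beta u}$ for every $\beta>\gamma$, which is slightly stronger than the stated limsup inequality; the paper's contradiction argument requires no explicit constant but is longer and needs the bookkeeping of the sequence $(s_n^*)$.
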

\begin{proof}
We will prove this lemma by contradiction.
Suppose that $\limsup_{t\rightarrow\infty}t^{-1}\log H(t)>\gamma$.
Then there exist $\epsilon_0>0$ and a nondecreasing sequence $\{t_n\}_{n\ge 1}$ such that
$0\le t_n\uparrow \infty$ as $n\rightarrow \infty$ and
\[
\log H(t_n)\ge \gamma (t_n+\epsilon_0),\quad\text{for all $n\ge 1$.}
\]
Moreover, we assume that $t_n \geq s_{n-1}^*$ for a certain sequence $(s_n^*)_{n \geq 1}$ which will be constructed below.
By definition of $\gamma$, we have that
\begin{align}\label{E:gF}
\int_{0}^\infty \e^{-(\gamma+\epsilon) t}H(t)\ud t<\infty,\quad\text{for all $\epsilon>0$.}
\end{align}
We claim that there exits $s_1>t_1$ such that
$H(s_1)\le \e^{(\gamma +\epsilon_0/2)s_1}$. If this is not true, then
\[
H(t)1_{\{t> t_1\}} > \e^{(\gamma+\epsilon_0/2) t}1_{\{t>t_1\}},
\]
which leads to the following contradiction with \eqref{E:gF}:
\[
\int_{t_1}^\infty \e^{-(\gamma+\epsilon_0/4) t}H(t)\ud t
\ge
\int_{t_1}^\infty \e^{-(\gamma+\epsilon_0/4) t}\e^{(\gamma+\epsilon_0/2) t}\ud t = \infty.
\]

Let $r_1=\inf\{s_1>t_1; H(s_1) \leq e^{(\gamma+\epsilon_0/2)s_1} \}$.
Then $H(t) > e^{(\gamma+\epsilon_0/2)t}$ for any $t \in[t_1,r_1)$.
Since $H(t)$ is nondecreasing and $H(t_1) \geq e^{(\gamma+\epsilon_0)t_1}$,
the smallest possible value for $r_1$ is obtained in the case when the function $H(t)$
is constant with value equal to $\e^{(\gamma+\epsilon_0)t_1}$ starting from $t_1$ until it crosses the function $\e^{(\gamma+\epsilon_0/2)t}$. In this case, $r_1=s_1^*$ where
$\e^{(\gamma+\epsilon_0/2)s_1^*} = \e^{(\gamma+\epsilon_0)t_1}$.
For a general non-decreasing function $H$, $r_1 \geq s_1^*$.
Hence,
\[
H(t) 1_{\{t\in [t_1,s_1^*]\}} \ge \e^{(\gamma +\epsilon_0/2)t_1} 1_{\{t\in [t_1,s_1^*]\}},
\quad\text{with $s_1^*=\left(1+\frac{\epsilon_0}{2\gamma+\epsilon_0}\right)t_1$}.
\]
We now select $t_2$ such that $t_2>s_1^*$ and $t_2 \geq t_1$. In the same way, we have that
\[
H(t) 1_{\{t\in [t_2,s_2^*]\}} \ge \e^{(\gamma +\epsilon_0/2)t_2} 1_{\{t\in [t_2,s_2^*]\}},
\quad\text{with $s_2=\left(1+\frac{\epsilon_0}{2\gamma+\epsilon_0}\right)t_2$}.
\]
In this way, we can find a sequence of disjoint nonempty intervals $\{[t_n,s_n^*] \}_{n\ge 1}$ such that
\[
H(t) 1_{\{t\in [t_n,s_n^*]\}} \ge \e^{(\gamma +\epsilon_0/2)t_n} 1_{\{t\in [t_n,s_n^*]\}},
\quad\text{with $s_n^*=\left(1+\frac{\epsilon_0}{2\gamma+\epsilon_0}\right)t_n$},
\]
for all $n\ge 1$. Now we have that
\begin{align*}
\int_{0}^\infty \e^{-(\gamma+\epsilon_0/2) t}H(t)\ud t
& \ge
\sum_{n=1}^\infty
\int_{t_n}^{s_n^*} \e^{-(\gamma+\epsilon_0/2) t}\e^{(\gamma+\epsilon_0/2) t_n}\ud t\\
& =
\sum_{n=1}^\infty \frac{1}{\gamma+\epsilon_0/2}\left(1-\e^{-(\gamma+\epsilon_0/2)\frac{\epsilon_0 t_n}{2\gamma+\epsilon_0}}\right)\\
& \ge
\sum_{n=1}^\infty \frac{2}{2\gamma+\epsilon_0}\left(1-\e^{-(\gamma+\epsilon_0/2)\frac{\epsilon_0 t_1}{2\gamma+\epsilon_0}}\right)=\infty,
\end{align*}
which contradicts with \eqref{E:gF}. This proves Lemma \ref{L:LimSupHt}.
\end{proof}

\section{Continuity of $J_n$ in $L^p(\Omega)$}
\label{A:cont-Jn}

The following result is an extension of Proposition A.3 of \cite{ChenDalang13Heat} to higher dimensions $d$.

\begin{proposition}
\label{P:Btx}
Fix $(t,x)\in(0,\infty)\times\R^d$. Set
\[
B_{t,x}:=\left\{
\left(t',x'\right)\in(0,\infty) \times\R^d:\: 0< t'\le
t+\frac{1}{2}\:,\:\: \left|x'-x\right|\le 1
\right\}
\]
Then there exists $a=a_{t,x}>0$ such
that for all $\left(t',x'\right)\in B_{t,x}$ and all $s\in [0,t']$ and
$y \in \bR^d$ with $|y|\ge a$,
\begin{equation}
\label{E:Btx}
G(t'-s,x'-y) \le G(t+1-s,x-y)\:.
\end{equation}
\end{proposition}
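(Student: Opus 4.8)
The plan is to take logarithms and reduce \eqref{E:Btx} to an elementary algebraic inequality expressing that the Gaussian tails of the faster-decaying kernel $G(t'-s,\cdot)$ are eventually dominated by those of $G(t+1-s,\cdot)$. Fix $(t',x')\in B_{t,x}$ and $s\in[0,t')$ (the case $s=t'$ being trivial under the convention $G(0,\cdot)\equiv 0$), and write $\tau=t'-s$, $\tau'=t+1-s$. Since $G(u,z)=(2\pi u)^{-d/2}\exp\bigl(-|z|^2/(2u)\bigr)$, inequality \eqref{E:Btx} is equivalent to
\[
\frac{|x-y|^2}{2\tau'}-\frac{|x'-y|^2}{2\tau}\le \frac{d}{2}\log\frac{\tau'}{\tau}.
\]

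First I would record the bounds that are uniform over $(t',x')\in B_{t,x}$ and $s\in[0,t')$: from $0<t'\le t+\tfrac12$ one gets $0<\tau\le t+\tfrac12$ and $\tfrac12\le t+1-t'\le\tau'\le t+1$, so $\tau'-\tau=t+1-t'\ge\tfrac12>0$; in particular the right-hand side above is $\ge 0$, and
\[
\frac1\tau-\frac1{\tau'}=\frac{\tau'-\tau}{\tau\tau'}\ge\frac{1}{2\bigl(t+\tfrac12\bigr)(t+1)}=:\delta_0>0,
\]
a constant depending only on $t$. Hence it suffices to make the left-hand side negative for $|y|$ large.

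The core of the argument is the decomposition
\[
\frac{|x-y|^2}{2\tau'}-\frac{|x'-y|^2}{2\tau}
=\frac{|x-y|^2-|x'-y|^2}{2\tau'}-\frac12\Bigl(\frac1\tau-\frac1{\tau'}\Bigr)|x'-y|^2
\le\frac{|x-y|^2-|x'-y|^2}{2\tau'}-\frac{\delta_0}{2}|x'-y|^2 .
\]
Since $|x-x'|\le 1$, factoring $|x-y|^2-|x'-y|^2=(|x-y|-|x'-y|)(|x-y|+|x'-y|)$ and using the (reverse) triangle inequality gives $|x-y|^2-|x'-y|^2\le 2|y|+2|x|+1$, while $\tau'\ge\tfrac12$ bounds the first term above by $2|y|+2|x|+1$; and for $|y|\ge|x|+1$ one has $|x'-y|^2\ge(|y|-|x|-1)^2$. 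Altogether,
\[
\frac{|x-y|^2}{2\tau'}-\frac{|x'-y|^2}{2\tau}\le 2|y|+2|x|+1-\frac{\delta_0}{2}\bigl(|y|-|x|-1\bigr)^2\qquad\text{for }|y|\ge|x|+1,
\]
and the right-hand side depends only on $|y|$, $|x|$, $t$ and tends to $-\infty$ as $|y|\to\infty$. Choosing $a=a_{t,x}\ge|x|+1$ so large that this expression is $\le 0$ for all $|y|\ge a$ then establishes \eqref{E:Btx}.

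The one delicate point — and the main obstacle — is the uniformity of the threshold: $a$ must not depend on $s$ or on $(t',x')$. This is precisely why the estimates are organized around the quantity $\tau'-\tau=t+1-t'$, which is bounded below by $\tfrac12$ independently of $s$, so that $\delta_0$ and all coefficients appearing in the final display are functions of $t$ and $x$ alone.
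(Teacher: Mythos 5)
Your reduction contains a sign error that the rest of the argument cannot absorb. Writing $\tau=t'-s$ and $\tau'=t+1-s$, taking logarithms in \eqref{E:Btx} gives the equivalent inequality
\[
\frac{|x-y|^2}{2\tau'}-\frac{|x'-y|^2}{2\tau}\;\le\;\frac{d}{2}\,\log\frac{\tau}{\tau'},
\]
not the inequality with $\frac{d}{2}\log\frac{\tau'}{\tau}$ on the right that you state: the smaller time $\tau$ carries the \emph{larger} prefactor $(2\pi\tau)^{-d/2}$, so the normalizing factors work against you (compare the paper's \eqref{E:goal}, whose right-hand side is $d\log\bigl(\frac{t'-s}{t+1-s}\bigr)\le 0$). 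Since $\tau\le\tau'$, the true right-hand side is nonpositive, and it tends to $-\infty$ as $s\uparrow t'$ (i.e.\ $\tau\downarrow 0$). Hence your pivotal step ``the right-hand side is $\ge 0$, so it suffices to make the left-hand side nonpositive'' is unjustified and in fact insufficient. Moreover, your subsequent estimate replaces $\frac{1}{\tau}-\frac{1}{\tau'}$ by the constant $\delta_0$, thereby discarding exactly the $1/\tau$ blow-up of the left-hand side that would be needed to dominate the extra term $\frac{d}{2}\log\frac{\tau'}{\tau}\sim\frac{d}{2}\log(1/\tau)$; the final display $2|y|+2|x|+1-\frac{\delta_0}{2}(|y|-|x|-1)^2\le 0$ therefore does not yield \eqref{E:Btx} when $s$ is close to $t'$.

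The gap is repairable within your framework, because the quadratic term you discarded does beat the logarithm: keep $\frac{d}{2}\log\frac{\tau'}{\tau}$, bound it by $\frac{d}{2}\cdot\frac{\tau'-\tau}{\tau}$ (using $\log(1+u)\le u$), and split $\frac{1}{2}\bigl(\frac{1}{\tau}-\frac{1}{\tau'}\bigr)|x'-y|^2=\frac{\tau'-\tau}{2\tau\tau'}|x'-y|^2$ into two halves. Since $\tau'\le t+1$, one half dominates $\frac{d}{2}\cdot\frac{\tau'-\tau}{\tau}$ as soon as $|x'-y|^2\ge 2d(t+1)$, while the other half is still bounded below by $\frac{\delta_0}{4}|x'-y|^2$ and absorbs the linear term $2|y|+2|x|+1$ exactly as in your argument; both thresholds depend only on $t$, $|x|$ and $d$, so the uniformity in $(t',x',s)$ that you rightly emphasize is preserved. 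For comparison, the paper takes a different route: it reduces \eqref{E:goal} to a single coordinate (after showing each of the other $d-1$ coordinate contributions is at most $2$) and invokes the one-dimensional result of Chen and Dalang, then covers $\{|y|\ge a_1\sqrt{d}\}$ by the sets $\{|y_j|\ge a_1\}$; your corrected argument would be self-contained and dimension-free, but as written the proof has a genuine gap.
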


\begin{proof}
By direct calculation, we see that inequality \eqref{E:Btx} is equivalent to
\begin{align}
\label{E:goal}
\sum_{i=1}^d \left(-\frac{(x'_i-y_i)^2}{t'-s} +\frac{(x_i-y_i)^2}{t+1-s}\right)
 \le d \: \log\left(\frac{t'-s}{t+1-s}\right),
\end{align}
where $x=(x_1,\ldots,x_d)$, $x'=(x_1',\ldots,x_d')$ and $y=(y_1,\ldots,y_d)$.

We fix $(t,x)$. In order to find $a=a_{t,x}$, we will freeze $d-1$ coordinates.
Because
\begin{align*}
-\frac{(x'_i-y_i)^2}{t'-s} &+\frac{(x_i-y_i)^2}{t+1-s}\\
&=-\frac{1+t-t'}{(1+t-s)(t'-s)}
\left(y-\frac{x'(1+t-s)-x(t'-s)}{1+t-t'}\right)^2 +\frac{(x_i-x_i')^2}{1+t-t'} \\
&\le
\frac{(x_i-x_i')^2}{1+t-t'} \le 2(x_i-x_i')^2\le 2,
\end{align*}
we have
\[
\sum_{i=1}^d \left(-\frac{(x'_i-y_i)^2}{t'-s} +\frac{(x_i-y_i)^2}{t+1-s}\right)
\le 2(d-1)
+\left(-\frac{(x'_j-y_j)^2}{t'-s} +\frac{(x_j-y_j)^2}{t+1-s}\right).
\]
for any index $j=1, \ldots,d$.
Hence, inequality \eqref{E:goal} holds, provided that there exists an index $j=1, \ldots,d$ such that
\begin{equation}
\label{one-dim}
-\frac{(x'_j-y_j)^2}{t'-s} +\frac{(x_j-y_j)^2}{t+1-s}
 \le  d \: \log\left(\frac{t'-s}{t+1-s}\right) - 2(d-1)\;.
\end{equation}
This shows that condition \eqref{E:goal} holds, if for some index $j=1,\ldots,d$, we have:
\begin{align}\label{E:subgoal1}
-\frac{(x'_j-y_j)^2}{t'-s} +\frac{(x_j-y_j)^2}{t+1-s} \le
2 d \: \log\left(\frac{t'-s}{t+1-s}\right)\;,
\end{align}
and
\begin{align}\label{E:subgoal2}
-\frac{(x'_j-y_j)^2}{t'-s} +\frac{(x_j-y_j)^2}{t+1-s}\le -4(d-1).
\end{align}

By the same argument as in the case $d=1$, there exists a constant $a_1=a_{1,t,x}>0$ such that
\eqref{E:subgoal1} and \eqref{E:subgoal2} hold for any $(t',x_j')$ with $0<t' \leq t+1/2$ and $|x_j'-x_j| \leq 1$, and for any $y_j \in \bR$ with $|y_j|>a_1$.

Let $a:=a_1 \sqrt{d}$. Note that
$\{y\in\R^d: |y|\ge  a \} \subset \bigcup_{j=1}^d B_j$, where $$B_j=\left\{y=(y_1,\ldots,y_d) \in\R^d: |y_j|\ge a_1 \right\}, \quad j=1,\ldots,d.$$
Therefore, for any $y\in\R^d$ with $|y|\ge a$,
there exists an index $j=1,\ldots,d$ such that $|y_j|\ge a_1$.
As we have shown above, this means that condition \eqref{E:goal} holds for this $y$, for any $(t',x') \in B_{t,x}$.
\end{proof}

\begin{lemma}
\label{L:J0-cont}
$J_0$ is continuous on $(0,\infty) \times \bR^d$.
\end{lemma}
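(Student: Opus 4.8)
The plan is to show that $J_0(t,x) = \int_{\bR^d} G(t,x-y)\,u_0(\ud y)$ is jointly continuous on $(0,\infty)\times\bR^d$ by a dominated-convergence argument, using the growth condition \eqref{E:InitCond} on $u_0$ together with the Gaussian decay of $G$. The key point is to produce, near any fixed $(t,x)$, an integrable dominating function for the family $\{G(t',x'-y): (t',x')\in B_{t,x}\}$, and for this I would invoke Proposition \ref{P:Btx}, which gives a radius $a = a_{t,x}$ such that $G(t'-0,x'-y) = G(t',x'-y) \le G(t+1,x-y)$ for all $(t',x')\in B_{t,x}$ and all $|y|\ge a$ (apply the proposition with $s=0$). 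On the complementary bounded region $|y|\le a$, the kernel $G(t',x'-y)$ is uniformly bounded for $(t',x')$ in a neighborhood of $(t,x)$ (since $t'$ stays bounded away from $0$), so there it is dominated by a constant, which is $|u_0|$-integrable over the compact set $\{|y|\le a\}$.

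\emph{First} I would fix $(t,x)\in(0,\infty)\times\bR^d$ and a sequence $(t_n,x_n)\to(t,x)$; without loss of generality $(t_n,x_n)\in B_{t,x}$ and $t_n \ge t/2$ for all large $n$. \emph{Second}, for $|y|\ge a_{t,x}$ I bound $G(t_n,x_n-y)\le G(t+1,x-y)$ by Proposition \ref{P:Btx}, and for $|y|\le a_{t,x}$ I bound $G(t_n,x_n-y)\le (2\pi t_n)^{-d/2}\le (\pi t)^{-d/2} =: M$. So the dominating function is
\[
\Phi(y) := G(t+1,x-y)\,\mathbf{1}_{\{|y|\ge a_{t,x}\}} + M\,\mathbf{1}_{\{|y|< a_{t,x}\}},
\]
which satisfies $\int_{\bR^d}\Phi(y)\,|u_0|(\ud y) \le J_+(t+1,x) + M\,|u_0|(\{|y|<a_{t,x}\}) < \infty$, the first term being finite by \eqref{E:InitCond} (equivalently, $J_+(t+1,x)<\infty$) and the second since $|u_0|$ is a (locally finite) Borel measure. \emph{Third}, since $(t',x')\mapsto G(t',x'-y)$ is continuous on $(0,\infty)\times\bR^d$ for each fixed $y$, the dominated convergence theorem gives $J_0(t_n,x_n)\to J_0(t,x)$. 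As $(t_n,x_n)$ was an arbitrary sequence converging to $(t,x)$, $J_0$ is continuous at $(t,x)$, and $(t,x)$ was arbitrary.

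The \textbf{main obstacle} is purely the construction of the dominating function, i.e.\ handling the tail $|y|\to\infty$ uniformly over $(t',x')$ near $(t,x)$; this is exactly what Proposition \ref{P:Btx} is designed to supply, so with that proposition in hand the argument is routine. A minor point to be careful about is that $u_0$ is a \emph{signed} measure, but this causes no difficulty: one works throughout with the total variation $|u_0| = u_{0,+}+u_{0,-}$, applies dominated convergence to $\int G(t',x'-y)\,u_{0,\pm}(\ud y)$ separately, and subtracts.
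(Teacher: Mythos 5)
Your proof is correct and follows essentially the same route as the paper: a dominated-convergence argument with the tail $|y|\ge a_{t,x}$ controlled via Proposition \ref{P:Btx} and the bounded region $|y|<a_{t,x}$ handled separately (you use the uniform sup bound $(2\pi t')^{-d/2}$ there, while the paper dominates by a constant multiple of $G(t,x-y)$, a negligible difference). Nothing is missing.
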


\begin{proof} Fix $t>0$ and $x \in \bR^d$. By the definition of $J_0$, we have:
$$|J_0(t,x)-J_0(t',x')| \leq  \int_{\bR^d}|G(t,x-y)-G(t',x'-y)|\:|u_0|(\ud y)=:L(t,t',x,x').$$

We claim that:
\begin{equation}
\label{E:L-conv}
\lim_{(t',x') \to (t,x)}L(t,t',x,x')=0.
\end{equation}

\noindent To see this, we write $L(t,t',x,x')=L_1(t,t',x,x')+L_2(t,t',x,x')$ where
\begin{align*}
L_1(t,t',x,x')&= \int_{|y| \geq a}|G(t,x-y)-G(t',x'-y)|\:|u_0|(\ud y), \quad\text{and}\\
L_2(t,t',x,x')&= \int_{|y|<a}|G(t,x-y)-G(t',x'-y)|\:|u_0|(\ud y),
\end{align*}
and $a=a_{t,x}$ is the constant given by Proposition \ref{P:Btx}. By enlarging $a$ if necessary, we may assume that $t>1/a$. By the dominated convergence theorem and the continuity of the function $G$, we see that $L_i(t,t',x,x') \to 0$ when $(t',x') \to (t,x)$, for $i=1,2$. To justify the application of this theorem, we argue as follows. For $L_1(t,t',x,x')$, we use Proposition \ref{P:Btx} to infer that for any $(t',x') \in B_{t,x}$ and for any $y \in \bR^d$ with $|y| \geq a$, we have:
$$|G(t,x-y)-G(t',x'-y)| \leq 2 G(t+1,x-y).$$
For $L_2(t,t',x,x')$, we use the fact that for any $t'>1/a$, $x' \in \bR^d$ and $y\in \bR^d$ with $|y| \leq a$,
$$\frac{G(t',x'-y)}{G(t,x-y)}=\frac{\sqrt{t}}{\sqrt{t'}}\exp \left(-\frac{(x'-y)^2}{2t'}+\frac{(x-y)^2}{2t} \right) \leq \frac{\sqrt{t}}{\sqrt{1/a}}\exp\left( \frac{|x|^2+|a|^2}{t}\right)=:C_{t,x},$$
and hence $|G(t',x'-y)-G(t,x-y)| \leq (C_{t,x}+1)G(t,x-y)$.
\end{proof}

\begin{lemma}
\label{L:Jn-cont}
For any $p \geq 2$ and $n \geq 1$, $J_n$ is $L^p(\Omega)$-continuous on $(0,\infty) \times \bR^d$.
\end{lemma}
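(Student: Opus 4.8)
\emph{Step 1: reduction to $L^2(\Omega)$.} Since $J_n(t',x')-J_n(t,x)=I_n\bigl(f_n(\cdot,t',x')-f_n(\cdot,t,x)\bigr)$ belongs to the $n$-th Wiener chaos $\cH_n$, the equivalence of the $L^p(\Omega)$-norms on $\cH_n$ used in \eqref{E:Jn} gives
\[
\Norm{J_n(t',x')-J_n(t,x)}_p\le(p-1)^{n/2}\Norm{J_n(t',x')-J_n(t,x)}_2,
\]
so it suffices to prove $L^2(\Omega)$-continuity. Fix $(t,x)$ and restrict attention to $(t',x')$ with $t/2\le t'\le 2t$ and $|x'-x|\le 1$. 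Using $\Norm{J_n(t',x')-J_n(t,x)}_2\le\Norm{J_n(t',x')-J_n(t,x')}_2+\Norm{J_n(t,x')-J_n(t,x)}_2$, it is enough to prove (i) continuity in time at $x'$, uniformly for $x'$ near $x$, and (ii) continuity in space at the fixed time $t$; the $J_+$-factors that will appear below cause no trouble for this uniformity because $J_+$ is bounded on compact sets (Lemma \ref{L:J0-cont} and \eqref{E:sup-J0}).

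\emph{Step 2: expansion of the kernel difference.} For either increment, $\Norm{J_n(t',x')-J_n(t,x)}_2^2=n!\,\Norm{\widetilde{g}}_{\cH^{\otimes n}}^2\le n!\,\Norm{g}_{\cH^{\otimes n}}^2$, where $g=f_n(\cdot,t',x')-f_n(\cdot,t,x)$ and we used that symmetrization is a contraction on $\cH^{\otimes n}$. The plan is to expand $\Norm{g}_{\cH^{\otimes n}}^2$ by bilinearity exactly as in Theorem \ref{T:fn-in-H}, with $\cF f_n$ given explicitly by Lemma \ref{L:Fourier-fn} (each kernel extended by zero outside its own time-simplex). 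Then, by the same Cauchy--Schwarz and $ab\le\tfrac12(a^2+b^2)$ argument that led to \eqref{estimate-alpha1}, together with Lemma \ref{basic-ineq-lemma}, this is bounded by $\Gamma_T^{\,n}$ (with $T:=t\vee t'$) times $\int_{[0,T]^n}\Psi(\mathbf t)\,\ud\mathbf t$, where $\Psi(\mathbf t)=(2\pi)^{-nd}\int_{\bR^{nd}}\bigl|\Delta\cF_n(\mathbf t,\xi_1,\ldots,\xi_n)\bigr|^2\,\mu(\ud\xi_1)\ldots\mu(\ud\xi_n)$ and $\Delta\cF_n(\mathbf t,\xi_1,\ldots,\xi_n)=\cF f_n(t_1,\cdot,\ldots,t_n,\cdot,t',x')(\xi_1,\ldots,\xi_n)-\cF f_n(t_1,\cdot,\ldots,t_n,\cdot,t,x)(\xi_1,\ldots,\xi_n)$.

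\emph{Step 3: estimating $\Delta\cF_n$ and concluding.} Using Lemma \ref{L:Fourier-fn}, I would decompose $\Delta\cF_n$ by telescoping over its factors into: a term from the change in the last exponential factor (the one carrying $t_{n+1}=t'$ versus $t$); a term from the change in the phase $\exp\{-\tfrac{i}{t'}(\sum_j t_j\xi_j)\cdot x\}$; a term from the difference of the $u_0$-integrals $\int e^{-i[\cdots]\cdot x_0}G(t',x'-x_0)\,u_0(\ud x_0)$; and, when $t'>t$, a boundary term supported on the extra simplex $\{t<t_n<t'\}$. The exponential-factor and phase pieces I would handle with $1-e^{-u}\le u\wedge1$, with $|1-e^{iu}|^2\le K_\theta|u|^{2\theta}$, and with the elementary inequality $\exp(-Au^2)u^{2\theta}\le C_\theta A^{-\theta}\exp(-\tfrac{A}{2}u^2)$ of \eqref{E:expo-A}, which extracts a factor $|t'-t|^\theta$ (resp.\ $|x'-x|^{2\theta}$) while leaving an exponential still integrable in the $\xi$-variables; the $u_0$-integral piece I would control by $|J_0|\le J_+$ and by continuity of $(t',x')\mapsto\int e^{-i[\cdots]\cdot x_0}G(t',x'-x_0)\,u_0(\ud x_0)$ (the same dominated-convergence argument as in Lemma \ref{L:J0-cont}, via Proposition \ref{P:Btx}) in the time-increment case, and by the pointwise heat-kernel bound \eqref{E:GG-x} in the space-increment case; the boundary term vanishes because its simplex shrinks while $\cF f_n$ stays bounded by $\lambda^n J_+$. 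After these reductions the $\mu$- and time-integrals are collapsed to the diagonal by the maximum principle (Lemma \ref{max-principle-lemma}) and bounded in terms of the functions $k$ and $h_n$ by Lemmas \ref{L:I-bnd} and \ref{L:Int-I}; each such bound is finite thanks to Dalang's condition \eqref{E:Dalang} and $t'\ge t/2>0$. Consequently every piece of $\int_{[0,T]^n}\Psi(\mathbf t)\,\ud\mathbf t$ is $o(1)$ as $(t',x')\to(t,x)$ (either with an explicit rate, or by dominated convergence using the uniform bounds just described together with $\Delta\cF_n\to0$ pointwise), and therefore $\Norm{J_n(t',x')-J_n(t,x)}_2\to0$.

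\emph{Main obstacle.} The hard part will be the boundary term caused by the time-simplex changing with $t$, and the $u_0$-integral factor, which changes both its phase and its heat kernel as $(t',x')\to(t,x)$: controlling these requires combining the continuity of $J_0$ (Proposition \ref{P:Btx}, Lemma \ref{L:J0-cont}) with the heat-kernel estimate \eqref{E:GG-x}, and it is precisely the finiteness guaranteed by Dalang's condition \eqref{E:Dalang} that legitimizes the final bounding (or dominated-convergence) step.
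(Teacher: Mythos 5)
Your overall architecture coincides with the paper's: reduce to $L^2(\Omega)$ by hypercontractivity, split the increment into a time part (uniform in $x$ on compacts) and a space part, split the time part into the common simplex and the boundary simplex $[0,t\vee t']^n\setminus[0,t\wedge t']^n$, expand via Lemma \ref{L:Fourier-fn}, pass to the diagonal by Cauchy--Schwarz and Lemma \ref{basic-ineq-lemma}, collapse the $\xi$-integrals with Lemma \ref{max-principle-lemma} and bound everything through $k$ and $h_n$ (Lemmas \ref{L:I-bnd} and \ref{L:Int-I}), and conclude by dominated convergence; the boundary term dies because its time integral runs over a shrinking set with a $k$-type integrand that is locally integrable under \eqref{E:Dalang}, and the $u_0$-integral/heat-kernel piece is handled exactly as in Lemma \ref{L:J0-cont} (or via \eqref{E:GG-x}). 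Your telescoping, which also tracks the change of the phase and of the $u_0$-integral rather than only the last exponential factor, is if anything more complete than the displayed estimate in the paper.

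The one genuine problem is the device you put forward as primary for the exponential-factor and phase pieces: extracting the rates $|t'-t|^{\theta}$ and $|x'-x|^{2\theta}$ via \eqref{E:expo-A} leaves a factor $(t'-t_n)^{-\theta}$ inside the time integral, so after the reduction to $k$ and $h_n$ you face $\int_0^{\cdot} s^{-\theta}k(s)\,\ud s$, whose finiteness is, by Lemma \ref{L:EquvHoldCond}, equivalent to \eqref{C:beta-cond} with $\beta=1-\theta$ --- strictly stronger than Dalang's condition \eqref{E:Dalang} and not assumed in this lemma. (For instance, a radial $\mu$ with density comparable to $|\xi|^{-d+2}(\log|\xi|)^{-2}$ at infinity satisfies \eqref{E:Dalang} while $\int_0^1 s^{-\theta}k(s)\,\ud s=\infty$ for every $\theta>0$.) So the ``explicit rate'' branch of your Step 3 fails under the present hypotheses; it is precisely the argument the paper reserves for Theorem \ref{T:Holder}, where \eqref{C:beta-cond} is assumed. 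You must therefore take the other branch you mention: keep $1-e^{-u}\le u\wedge 1$ (and the mere boundedness of $|1-e^{iu}|$) and conclude by dominated convergence, the dominating function being the diagonal integrand whose integral is at most $2^n h_n(t)<\infty$ under \eqref{E:Dalang} alone; this is what the paper does, with the additional care, for left increments in time, of the bound $\min\bigl(2h/(e(t-h-t_n)),1\bigr)$ so that the dominating function stays integrable. With that substitution your plan is correct and essentially the paper's proof.
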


\begin{proof} We proceed as in the proof of Lemma 3.6 of \cite{balan-song16}. We divide the proof in three steps.

{\em Step 1. (right-continuity in time)}  We will prove that for any $t>0$ and $a>0$,
\begin{equation}
\label{E:t-cont1}
\lim_{h \downarrow 0}\|J_n(t+h,x)-J_n(t,x)\|_p=0 \quad \mbox{uniformly in} \ x \in [-a,a]^d.
\end{equation}
For any $h>0$, we have:
\begin{eqnarray}
\nonumber
\|J_n(t+h,x)-J_n(t,x)\|_p^2 & \leq &  (p-1)^n \|J_n(t+h,x)-J_n(t,x)\|_2^2 \\
\nonumber
& = & (p-1)^n n! \,
\|\widetilde{f}_n(\cdot,t+h,x)-\widetilde{f}_n(\cdot,t,x)\|_{\cH^{\otimes n}}^2 \\
\label{E:AB}
& \leq & \frac{2}{n!} \left(A_n(t,x,h)+B_n(t,x,h) \right),
\end{eqnarray}
where
\begin{align}
\label{def-A}
A_n(t,x,h)&= (n!)^2 \|\widetilde{f}_n(\cdot,t+h,x)1_{[0,t]^{n}}-\widetilde{f}_n(\cdot,t,x) \|_{\cH^{\otimes n}}^{2},\\
\label{def-B}
B_n(t,x,h)&= (n!)^2 \|\widetilde{f}_n(\cdot,t+h,x)1_{[0,t+h]^{n}\setminus [0,t]^n} \|_{\cH^{\otimes n}}^{2}.
\end{align}

We evaluate $A_n(t,h,x)$ first. We have:
$$A_n(t,h,x) = \int_{[0,t]^{2n}}
\prod_{j=1}^{n}\gamma(t_j-s_j) \psi_{t,h,x}^{(n)}({\bf t}, {\bf
s})\ud{\bf t} \ud{\bf s},$$
where
\begin{align*}
\psi_{t,h,x}^{(n)}({\bf t},{\bf s}) =\frac{1}{(2\pi)^{nd}}\int_{\bR^{nd}}
 &\cF (g_{{\bf t},t+h,x}^{(n)}-g_{{\bf t},t,x}^{(n)}) (\xi_1,\ldots,\xi_n) \\
\times& \overline{\cF (g_{{\bf s},t+h,x}^{(n)}-g_{{\bf s},t,x}^{(n)}) (\xi_1, \ldots,\xi_n)}
\:\: \mu(\ud\xi_1)
\ldots \mu(\ud\xi_n).
\end{align*}
Similarly to \eqref{estimate-alpha1}, we have:
\begin{equation}
\label{E:A}
A_n(t,h,x) \leq \Gamma_t^n \int_{[0,t]^n}\psi_{t,h,x}^{(n)}({\bf t},{\bf t})\ud {\bf  t}=\Gamma_t^n \sum_{\rho \in S_n}\int_{0<t_{\rho(1)}<\ldots<t_{\rho(n)}<t}\psi_{t,h,x}^{(n)}({\bf t},{\bf t})\ud {\bf  t}.
\end{equation}
If $t_{\rho(1)}<\ldots<t_{\rho(n)}<t=:t_{\rho(n+1)}$, then by \eqref{E:Fourier-gt},
\begin{eqnarray*}
\lefteqn{|\cF (g_{{\bf t},t+h,x}^{(n)}-g_{{\bf t},t,x}^{(n)}) (\xi_1,\ldots,\xi_n) |^2 \leq  \lambda^{2n}
J_+^2(t,x) \prod_{k=1}^{n-1} \exp
\left( -\frac{t_{\rho(k+1)}-t_{\rho(k)}}{t_{\rho(k)} t_{\rho(k+1)}} \left|\sum_{j=1}^{k}t_{\rho(j)} \xi_{\rho(j)} \right|^2 \right) } \\
& & \left| \exp \left( -\frac{1}{2}\frac{t+h-t_{\rho(n)}}{t_{\rho(n)}(t+h)} \left|\sum_{j=1}^{n}t_{j} \xi_{j} \right|^2 \right)-\exp\left(
-\frac{1}{2}\frac{t-t_{\rho(n)}}{t_{\rho(n)}t} \left|\sum_{j=1}^{n}t_{j} \xi_{j} \right|^2 \right)
\right|^2 \\
& & = \lambda^{2n} J_+^2(t,x) \prod_{k=1}^{n} \exp
\left( -\frac{t_{\rho(k+1)}-t_{\rho(k)}}{t_{\rho(k)} t_{\rho(k+1)}} \left|\sum_{j=1}^{k}t_{\rho(j)} \xi_{\rho(j)} \right|^2 \right)
\left[ 1-\exp \left(-\frac{h}{2t(t+h)}\left|\sum_{j=1}^{n}t_j \xi_j\right|^2 \right) \right]^2,
\end{eqnarray*}
and hence
\begin{align}
\nonumber
\psi_{t,h,x}^{(n)}({\bf t},{\bf t}) \leq & \Gamma_t^n J_+^2(t,x) \frac{1}{(2\pi)^{nd}}\int_{\bR^{nd}}
\prod_{k=1}^{n} \exp
\left( -\frac{t_{\rho(k+1)}-t_{\rho(k)}}{t_{\rho(k)} t_{\rho(k+1)}} \left|\sum_{j=1}^{k}t_{\rho(j)} \xi_{\rho(j)} \right|^2 \right)\\
\label{E:psi-h}
 &\times \left[ 1-\exp \left(-\frac{h}{2t(t+h)}\left|\sum_{j=1}^{n}t_j \xi_j\right|^2 \right) \right]^2 \mu(\ud \xi_1)\ldots \mu_n(\ud \xi_n).
\end{align}
Using \eqref{E:A} and \eqref{E:psi-h}, it follows that
\begin{align}
\nonumber
A_n(t,h,x) \leq & \Gamma_t^n \lambda^{2n}J_+^2(t,x) n! \frac{1}{(2\pi)^{nd}} \int_{0<t_1<\ldots<t_n<t} \int_{\bR^{nd}} \prod_{k=1}^{n}\exp \left(-\frac{t_{k+1}-t_k}{t_k t_{k+1}} \left|\sum_{j=1}^{k}t_j \xi_j \right|^2\right) \\
\label{E:bound-An}
 & \times \left[ 1-\exp \left(-\frac{h}{2t(t+h)}\left|\sum_{j=1}^{n}t_j \xi_j\right|^2 \right) \right]^2 \mu(\ud \xi_1) \ldots \mu(\ud \xi_n) \ud t_1 \ldots \ud t_n,
\end{align}
with the convention $t_{n+1}=t$.
By the dominated convergence theorem and \eqref{E:sup-J0}, we conclude that
\begin{equation}
\label{E:A-conv}
\lim_{h \downarrow 0}A_n(t,h,x)= 0 \quad \mbox{uniformly in $x \in [-a,a]^d$}.
\end{equation}

As for $B_n(t,h,x)$, note that
$$B_n(t,h,x) =\int_{[0,t+h]^{2n}}
\prod_{j=1}^{n}\gamma(t_j-s_j)
\gamma_{t,h,x}^{(n)}({\bf t}, {\bf s})1_{D_{t,h}}({\bf t}) 1_{D_{t,h}}({\bf s})\ud{\bf t} d{\bf s},
$$
where $D_{t,h}=[0,t+h]^n \verb2\2 [0,t]^n$ and
$$\gamma_{t,h,x}^{(n)}({\bf t},{\bf s}) = \frac{1}{(2\pi)^{nd}}\int_{\bR^{nd}} \cF g_{{\bf
t},t+h,x}^{(n)}(\xi_1, \ldots,\xi_n)\overline{\cF g_{{\bf s},t+h,x}^{(n)}(\xi_1, \ldots,\xi_n)} \mu(\ud \xi_1) \ldots \mu(\ud \xi_n).$$
Similarly to (46) of \cite{balan-song16}, it can be proved that
\begin{equation}
\label{E:B}
B_n(t,h,x) \leq \Gamma_{t+h}^n \int_{[0,t+h]^n} \gamma_{t,h,x}^{(n)}({\bf t},{\bf t})1_{D_{t,h}}({\bf t})\ud {\bf t}.
\end{equation}
If $t_{\rho(1)}<\ldots<t_{\rho(n)}<t+h$, then by \eqref{E:Fourier-gt},
\begin{align*}
|\cF g_{{\bf t},t+h,x}(\xi_1,\ldots,\xi_n)|^2  \leq & \lambda^{2n} J_+^2(t,x)
\prod_{k=1}^{n-1}\exp \left(-\frac{t_{\rho(k+1)}-t_{\rho(k)}}{t_{\rho(k)}t_{\rho(k+1)}}  \left|\sum_{j=1}^k t_{\rho(j)}\xi_{\rho(j)} \right|^2\right) \\
 &\times \exp \left(-\frac{t+h-t_{\rho(n)}}{(t+h)t_{\rho(n)}}  \left|\sum_{j=1}^n t_{j}\xi_{j} \right|^2\right),
\end{align*}
and hence, by Lemma \ref{max-principle-lemma}
\begin{align}
\nonumber
\gamma_{t,h,x}^{(n)}({\bf t},{\bf t})  \leq & \lambda^{2n}J_+^2(t,x) \frac{1}{(2\pi)^{nd}}\int_{\bR^{nd}}\prod_{k=1}^{n-1}\exp \left(-\frac{t_{\rho(k+1)}-t_{\rho(k)}}{t_{\rho(k)}t_{\rho(k+1)}}  \left|\sum_{j=1}^k t_{\rho(j)}\xi_{\rho(j)} \right|^2\right) \\
\nonumber
 &\times \exp \left(-\frac{t+h-t_{\rho(n)}}{(t+h)t_{\rho(n)}}  \left|\sum_{j=1}^n t_{j}\xi_{j} \right|^2\right) \mu(\ud \xi_1) \ldots \mu_n(\ud \xi_n) \\
\nonumber
 \leq & \lambda^{2n} J_+^2(t,x) \frac{1}{(2\pi)^{nd}}  \prod_{k=1}^{n-1} \int_{\bR^d}
\exp \left(-\frac{t_{\rho(k+1)}-t_{\rho(k)}}{t_{\rho(k)}t_{\rho(k+1)}}  \left|t_{\rho(k)}\xi_{k} \right|^2\right)\mu(\ud \xi_k) \\
\label{E:gamma-h}
 & \times \int_{\bR^d}\exp \left(-\frac{t+h-t_{\rho(n)}}{(t+h)t_{\rho(n)}}  \left|t_{\rho(n)}\xi_{n} \right|^2\right)\mu(\ud \xi_n).
\end{align}

Using relations \eqref{E:B} and \eqref{E:gamma-h},
and the fact that $$D_{t,h}=\bigcup_{\rho \in S_n}\{(t_1,\ldots,t_n);0<t_{\rho(1)}<\ldots<t_{\rho(n)}<t+h, \, t_{\rho(n)}>t\},$$  we obtain that
\begin{align}
\nonumber
B_n(t,h,x)  \leq & \Gamma_{t+h}^n \sum_{\rho \in S_n} \int_{t}^{t+h} \int_{0<t_{\rho(1)}<\ldots<t_{\rho(n-1)}<t_{\rho(n)}} \gamma_{t,h,x}^{(n)}({\bf t},{\bf t})\ud t_{\rho(1)} \ldots
\ud t_{\rho(n-1)} \ud t_{\rho(n)} \\
\nonumber
 \leq & \Gamma_{t+h}^n \lambda^{2n} J_+^2(t,x)  n! \, \frac{1}{(2\pi)^{nd}} \int_{t}^{t+h} \int_{0<t_{1}<\ldots<t_{n-1}<t_{n}} \prod_{k=1}^{n-1} \int_{\bR^d}
\exp \left(-\frac{t_{k+1}-t_{k}}{t_{k}t_{k+1}}  \left|t_{k}\xi_{k} \right|^2\right)\mu(\ud \xi_k)\\
\nonumber
 & \times \int_{\bR^d}\exp \left(-\frac{t+h-t_{n}}{(t+h)t_{n}}  \left|t_{n}\xi_{n} \right|^2\right) \mu(\ud \xi_n)
\ud t_{1} \ldots \ud t_{n-1} \ud t_{n} \\
\nonumber
=& \Gamma_{t+h}^n \lambda^{2n} J_+^2(t,x) n! \, \int_{t}^{t+h} \int_{0<t_{1}<\ldots<t_{n-1}<t_{n}} J_{t_n}^{(n-1)}(t_1,\ldots,t_{n-1}) k\left(\frac{2(t+h-t_n)t_n}{t+h}\right)\ud t_n \\
\nonumber
 \leq & \Gamma_{t+h}^n \lambda^{2n} J_+^2(t,x) n! \, 2^{n-1}\int_{t}^{t+h}h_{n-1}(t_n)
k\left(\frac{2(t+h-t_n)t_n}{t+h}\right)\ud t_n \\
\nonumber
=& \Gamma_{t+h}^n \lambda^{2n}J_+^2(t,x) n! \, 2^{n-1}\int_{0}^{h}h_{n-1}(t+s)
k\left(\frac{2(h-s)(t+s)}{t+h}\right) \ud s \\
\label{E:bound-Bn}
\leq & \Gamma_{t+h}^n \lambda^{2n} J_+^2(t,x) n! \, 2^{n-1} h_{n-1}(t+h) \int_0^h k \left(\frac{2(h-s)t}{t+h} \right)\ud s
\end{align}
where the second last inequality is due to Lemma \ref{L:Int-I}, and for the last inequality we used the fact that $h_{n-1}$ is non-decreasing and $k$ is non-increasing.
By the dominated convergence theorem and \eqref{E:sup-J0}, we infer that
\begin{equation}
\label{E:B-conv}
\lim_{h \downarrow 0}B_n(t,h,x) = 0 \quad \mbox{uniformly in $x \in [-a,a]^d$}.
\end{equation}

Relation \eqref{E:t-cont1} follows from \eqref{E:AB}, \eqref{E:A-conv} and \eqref{E:B-conv}.

\vspace{3mm}

{\em Step 2. (left-continuity in time)} We will prove that for any $t>0$ and $a>0$,
\begin{equation}
\label{E:t-cont2}
\lim_{h \downarrow 0}\|J_n(t-h,x)-J_n(t,x)\|_p=0 \quad \mbox{uniformly in} \ x \in [-a,a]^d.
\end{equation}
For any $h>0$, we have:
\begin{eqnarray}
\nonumber
\|J_n(t-h,x)-J_n(t,x)\|_p^2 & \leq &  (p-1)^n \|J_n(t-h,x)-J_n(t,x)\|_2^2 \\
\nonumber
& = & (p-1)^n n! \,
\|\widetilde{f}_n(\cdot,t-h,x)-\widetilde{f}_n(\cdot,t,x)\|_{\cH^{\otimes n}}^2 \\
\label{E:AB'}
& \leq & \frac{2}{n!} \left(A_n'(t,x,h)+B_n'(t,x,h) \right),
\end{eqnarray}
where
\begin{eqnarray}
\label{def-A'}
A_n'(t,x,h)&=& (n!)^2 \|\widetilde{f}_n(\cdot,t-h,x)-\widetilde{f}_n(\cdot,t,x) 1_{[0,t-h]^n}\|_{\cH^{\otimes n}}^{2},\\
\label{def-B'}
B_n'(t,x,h)&=& (n!)^2 \|\widetilde{f}_n(\cdot,t,x)1_{[0,t]^{n}\verb2\2 [0,t-h]^n} \|_{\cH^{\otimes n}}^{2}.
\end{eqnarray}

We evaluate $A_n'(t,h,x)$ first. We have:
$$A_n'(t,h,x) = \int_{[0,t-h]^{2n}}
\prod_{j=1}^{n}\gamma(t_j-s_j) \psi_{t,h,x}^{(n)'}({\bf t}, {\bf
s})\ud{\bf t} \ud{\bf s},$$
where
\begin{align*}
\psi_{t,h,x}^{(n)}({\bf t},{\bf s})' =&\frac{1}{(2\pi)^{nd}}\int_{\bR^{nd}}
\cF (g_{{\bf t},t,x}^{(n)}-g_{{\bf t},t-h,x}^{(n)}) (\xi_1,\ldots,\xi_n) \\
 &\times \overline{\cF (g_{{\bf s},t,x}^{(n)}-g_{{\bf s},t-h,x}^{(n)}) (\xi_1, \ldots,\xi_n)}\mu(\ud\xi_1)
\ldots \mu(\ud\xi_n).
\end{align*}
Similarly to \eqref{estimate-alpha1}, we have:
\begin{equation}
\label{E:A'}
A_n'(t,h,x) \leq \Gamma_{t-h}^n \int_{[0,t-h]^n}\psi_{t,h,x}^{(n)'}({\bf t},{\bf t})\ud {\bf  t}=\Gamma_{t-h}^n \sum_{\rho \in S_n}\int_{0<t_{\rho(1)}<\ldots<t_{\rho(n)}<t-h}\psi_{t,h,x}^{(n)'}({\bf t},{\bf t})\ud {\bf  t}.
\end{equation}
If $t_{\rho(1)}<\ldots<t_{\rho(n)}<t-h$, then by \eqref{E:Fourier-gt},
\begin{align*}
&\hspace{-2em}|\cF (g_{{\bf t},t,x}^{(n)}-g_{{\bf t},t-h,x}^{(n)}) (\xi_1,\ldots,\xi_n) |^2 \\
\leq&  \lambda^{2n} J_+^2(t,x) \prod_{k=1}^{n-1}
\exp\left( -\frac{t_{\rho(k+1)}-t_{\rho(k)}}{t_{\rho(k)} t_{\rho(k+1)}} \left|\sum_{j=1}^{k}t_{\rho(j)} \xi_{\rho(j)} \right|^2 \right)  \\
 & \times\left| \exp \left( -\frac{1}{2}\frac{t-t_{\rho(n)}}{tt_{\rho(n)}} \left|\sum_{j=1}^{n}t_{j} \xi_{j} \right|^2 \right)-\exp\left(
-\frac{1}{2}\frac{t-h-t_{\rho(n)}}{(t-h)t_{\rho(n)}} \left|\sum_{j=1}^{n}t_{j} \xi_{j} \right|^2 \right)
\right|^2 \\
=&  \lambda^{2n} J_+^2(t,x) \prod_{k=1}^{n} \exp
\left( -\frac{t_{\rho(k+1)}-t_{\rho(k)}}{t_{\rho(k)} t_{\rho(k+1)}} \left|\sum_{j=1}^{k}t_{\rho(j)} \xi_{\rho(j)} \right|^2 \right)\\
&\times \exp \left(-\frac{t-h-t_{\rho(n)}}{(t-h)t_{\rho(n)}}\left|\sum_{j=1}^n t_j \xi_j  \right|^2 \right)  \left[ 1-\exp \left(-\frac{h}{2t(t-h)}\left|\sum_{j=1}^{n}t_j \xi_j\right|^2 \right) \right]^2,
\end{align*}
and hence
\begin{align}
\nonumber
\psi_{t,h,x}^{(n)'}({\bf t},{\bf t}) \leq & \Gamma_t^n J_+^2(t,x) \frac{1}{(2\pi)^{nd}}\int_{\bR^{nd}}
\prod_{k=1}^{n-1} \exp
\left( -\frac{t_{\rho(k+1)}-t_{\rho(k)}}{t_{\rho(k)} t_{\rho(k+1)}} \left|\sum_{j=1}^{k}t_{\rho(j)} \xi_{\rho(j)} \right|^2 \right) \\
\nonumber
 & \times\exp \left(-\frac{t-h-t_{\rho(n)}}{(t-h)t_{\rho(n)}}\left|\sum_{j=1}^n t_j \xi_j  \right|^2 \right) \\
\label{E:psi-h'}
 &
\times \left[ 1-\exp \left(-\frac{h}{2t(t-h)}\left|\sum_{j=1}^{n}t_j \xi_j\right|^2 \right) \right]^2 \mu(\ud \xi_1)\ldots \mu_n(\ud \xi_n).
\end{align}

It follows that
\begin{align}
\nonumber
A_n'(t,h,x)  \leq & \Gamma_t^n \lambda^{2n}J_+^2(t,x) n! \frac{1}{(2\pi)^{nd}} \int_{0<t_1<\ldots<t_n<t-h} \int_{\bR^{nd}} \prod_{k=1}^{n-1}\exp \left(-\frac{t_{k+1}-t_k}{t_k t_{k+1}} \left|\sum_{j=1}^{k}t_j \xi_j \right|^2\right) \\
\nonumber
 & \times \exp \left(-\frac{t-h-t_{n}}{(t-h)t_{n}}\left|\sum_{j=1}^n t_j \xi_j  \right|^2 \right)\\
\label{E:bound-An'}
 & \times \left[ 1-\exp \left(-\frac{h}{2t(t-h)}\left|\sum_{j=1}^{n}t_j \xi_j\right|^2 \right) \right]^2 \mu(\ud \xi_1) \ldots \mu(\ud \xi_n) \ud t_1 \ldots \ud t_n,
\end{align}
We will now prove that
\begin{equation}
\label{E:A'-conv}
\lim_{h \downarrow 0}A_n'(t,h,x)= 0 \quad \mbox{uniformly in $x \in [-a,a]^d$}.
\end{equation}
For this, we assume that $h\in [0,t/2]$.
Notice that:
\begin{multline*}
 \exp\left(-\frac{t-h-t_n}{(t-h)t_n}\left|\sum_{j=1}^n t_j \xi_j\right|^2\right)
\left[1-\exp\left(-\frac{h}{2t(t-h)}\left|\sum_{j=1}^n t_j \xi_j\right|^2\right)\right]^{2}\\
\le
\exp\left(-\frac{t-h-t_n}{(t-h)t_n}\left|\sum_{j=1}^n t_j \xi_j\right|^2\right)
\min\left(\frac{h}{t^{2}}\left|\sum_{j=1}^n t_j \xi_j\right|^{2},1\right).
\end{multline*}
For this, we used the fact that $(1-e^{-x})^2 \leq 1-e^{-x} \leq \min(x,1)$ for $x>0$.
Now we move the exponential inside $\min(...)$ and consider the two competing terms separately.
For $A>0$ and $x\ge 0$, we see that
\begin{align}\label{E:OptA}
\exp\left(-\frac{A}{2} x^2\right) x^{2}
=\exp\left(-\frac{A}{2} x^2 + 2 \log x\right)
\le (2/e) A^{-1}.
\end{align}
This can be seen by noticing that the function $f(x)=-\frac{A}{2} x^2 +2 \log x,x>0$ attains its maximum at $x_0=\sqrt{2/A}$.
Hence, inequality \eqref{E:OptA} implies that
\begin{align*}
\exp\left(-\frac{t-h-t_n}{(t-h)t_n}\left|\sum_{j=1}^n t_j \xi_j\right|^2\right)
\frac{h}{t^2}\left|\sum_{j=1}^n t_j \xi_j\right|^{2}
& \le \frac{2h}{e t^2}
\frac{(t-h)t_n}{t-h-t_n}
\exp\left(-\frac{t-h-t_n}{2(t-h)t_n}\left|\sum_{j=1}^n t_j \xi_j\right|^2\right)\\
& \le
\frac{2h}{e(t-h-t_n)}
\exp\left(-\frac{t-h-t_n}{2(t-h)t_n}\left|\sum_{j=1}^n t_j \xi_j\right|^2\right).
\end{align*}
The second term is bounded by
\[
\exp\left(-\frac{t-h-t_n}{(t-h)t_n}\left|\sum_{j=1}^n t_j \xi_j\right|^2\right)
\le
\exp\left(-\frac{t-h-t_n}{2(t-h)t_n}\left|\sum_{j=1}^n t_j \xi_j\right|^2\right).
\]
Therefore,
\begin{align*}
\exp&\left(-\frac{t-h-t_n}{(t-h)t_n}\left|\sum_{j=1}^n t_j \xi_j\right|^2\right)
\min\left(\frac{h}{t^{2}}\left|\sum_{j=1}^n t_j \xi_j\right|^{2},1\right)\\
&\le
\exp\left(-\frac{t-h-t_n}{2(t-h)t_n}\left|\sum_{j=1}^n t_j \xi_j\right|^2\right)
\min\left(\frac{2h}{e(t-h-t_n)},1\right).
\end{align*}
Putting the above bounds back into the expression of $A_n'(t,h,x)$, we see that
\begin{align*}
 A_n'(t,h,x) \le  & \Gamma_t^n \lambda^{2n} J_+^2(t,x) n!
 \frac{1}{(2\pi)^{nd}}
\int_{0<t_1<\ldots<t_n<t-h} \ud t_1 \ldots \ud t_n \int_{\bR^{nd}} \mu(\ud \xi_1) \ldots \mu(\ud \xi_n)\\
&\times \prod_{k=1}^{n-1}\exp \left(-\frac{t_{k+1}-t_k}{2t_k t_{k+1}} \left|\sum_{j=1}^{k}t_j \xi_j \right|^2\right)\\
&\times \exp \left(-\frac{t-h-t_n}{2(t-h)t_n} \left|\sum_{j=1}^{n}t_j \xi_j \right|^2\right)
\min\left(\frac{2h}{e(t-h-t_n)},1\right)\\
=:&\Gamma_t^n \lambda^{2n} J_+^2(t,x) n!\: A''_n(t,h).
\end{align*}
Relation \eqref{E:A'-conv} will follow from \eqref{E:sup-J0}, once we prove that:
\begin{align}\label{E_:InZero}
 \lim_{h\rightarrow 0} A_n''(t,h)  =0 .
\end{align}

We will use the fact that
\begin{equation}
\label{E:half-tn}
\frac{1}{2}\frac{t_{k+1}-t_k}{t_{k}t_{k+1}} \left|\sum_{j=1}^{k}t_j \xi_j\right|^2=
\frac{\frac{t_{k+1}}{2}-\frac{t_k}{2}}{\frac{t_{k+1}}{2}\frac{t_k}{2}}
\left|\sum_{j=1}^{k}\frac{t_j}{2}\xi_j\right|^2
\end{equation}
for any $k=1, \ldots,n$, with $t_{n+1}=t-h$.
Using the change of variables $t_k'=t_k/2$ for $k=1,\ldots,n$, and recalling the definition of the integral $I_{t}^{(n)}(t_1,\ldots,t_n)$ given in Lemma \ref{L:Jn}, we see that
\begin{align*}
A_n''(t,h) & =  2^n \int_{0<t_1<\ldots<t_{n}<\frac{t-h}{2}}\min\left(\frac{h}{e(\frac{t-h}{2}-t_n)},1 \right)I_{\frac{t-h}{2}}^{(n)}(t_1,\ldots,t_n)\ud t_1 \ldots \ud t_n\\
& \leq  2^n \int_{0<t_1<\ldots<t_{n}<\frac{t-h}{2}}\min\left(\frac{h}{e(\frac{t-h}{2}-t_n)},1 \right)J_{\frac{t-h}{2}}^{(n)}(t_1,\ldots,t_n)\ud t_1 \ldots \ud t_n \\
&= 2^n \int_{0}^{\frac{t-h}{2}} \min\left(\frac{h}{e(\frac{t-h}{2}-t_n)},1 \right)
k\left(\frac{2(\frac{t-h}{2}-t_n)t_n}{\frac{t-h}{2}}\right)\\
& \quad\times\left(\int_{0<t_1<\ldots<t_{n-1}<t_{n}}J_{t_n}^{(n-1)}(t_1,\ldots,t_{n-1})\ud t_1 \ldots \ud t_{n-1}\right) \ud t_n\\
& \leq 2^{2n-1} \int_{0}^{\frac{t-h}{2}} \min\left(\frac{h}{e(\frac{t-h}{2}-s)},1 \right)
k\left(\frac{2(\frac{t-h}{2}-s)s}{\frac{t-h}{2}}\right) h_{n-1}(s) \ud s\\
& \leq 2^{2n} \int_{0}^{\frac{t-h}{2}} \min\left(\frac{h}{es},1 \right)k(s)h_{n-1}(t-s) \ud s,
\end{align*}
where the first inequality is due to Lemma \ref{L:I-bnd}, the second last inequality is due to Lemma \ref{L:Int-I}, and the last inequality can be proved similarly to \eqref{hn-ineq}. By the dominated convergence theorem, the last integral converges to $0$ as $h \to 0$, because $\int_0^{t} k(s) h_{n-1}(t-s) \ud s = h_n(t)<\infty$. This concludes the proof of \eqref{E_:InZero}.

As for $B_n'(t,h,x)$, note that
$$B_n'(t,h,x) =\int_{[0,t]^{2n}}
\prod_{j=1}^{n}\gamma(t_j-s_j)
\psi_{t,x}^{(n)}({\bf t}, {\bf s})1_{D_{t,h}'}({\bf t}) 1_{D_{t,h}'}({\bf s})\ud{\bf t} \ud{\bf s},
$$
where $D_{t,h}'=[0,t]^n \verb2\2 [0,t-h]^n$ and $\psi_{t,x}^{(n)}({\bf t},{\bf s})$ is given by \eqref{E:def-psi}.

Similarly to \eqref{E:B}, we have:
\begin{equation}
\label{E:B'}
B_n'(t,h,x) \leq \Gamma_{t}^n \int_{[0,t]^n} \psi_{t,x}^{(n)}({\bf t},{\bf t})1_{D_{t,h}'}({\bf t})\ud {\bf t}.
\end{equation}

Using Lemmas \ref{le-lemma}, \ref{max-principle-lemma} and \ref{L:Int-I}, and the fact that
$$D_{t,h}'=\bigcup_{\rho \in S_n}\{(t_1,\ldots,t_n);0<t_{\rho(1)}<\ldots<t_{\rho(n)}<t, \, t_{\rho(n)}>t-h\},$$  we obtain that
\begin{align}
\nonumber
B_n'(t,h,x)  \leq & \Gamma_{t}^n \sum_{\rho \in S_n} \int_{t-h}^{t} \int_{0<t_{\rho(1)}<\ldots<t_{\rho(n-1)}<t_{\rho(n)}} \psi_{t,x}^{(n)}({\bf t},{\bf t})\ud t_{\rho(1)} \ldots
\ud t_{\rho(n-1)} \ud t_{\rho(n)} \\
\nonumber
 \leq & \Gamma_{t}^n \lambda^{2n} J_+^2(t,x) n! \, \frac{1}{(2\pi)^{(n-1)d}}
 \int_{t-h}^{t} \int_{0<t_{1}<\ldots<t_{n-1}<t_{n}} \ud t_{1} \ldots \ud t_{n-1} \ud t_{n}\\
 \nonumber
&\times\prod_{k=1}^{n-1} \int_{\bR^d}
\exp \left(-\frac{t_{k+1}-t_{k}}{t_{k}t_{k+1}}  \left|t_{k}\xi_{k} \right|^2\right)\mu(\ud \xi_k)\\
\nonumber
 & \times \left(\frac{1}{(2\pi)^d}\int_{\bR^d} \exp \left(-\frac{t-t_{n}}{t_nt}  \left|t_n\xi_{n} \right|^2\right)\mu(\ud \xi_n) \right) \\
\nonumber
=& \Gamma_{t}^n \lambda^{2n} J_+^2(t,x) n! \, \int_{t-h}^{t} \ud t_n \: k\left( \frac{2(t-t_n)t_n}{t}\right)\\
\nonumber
&\times \int_{0<t_{1}<\ldots<t_{n-1}<t_{n}} J_{t_n}^{(n-1)}(t_1,\ldots,t_{n-1})\ud t_1 \ldots \ud t_{n-1} \\
\nonumber
 \leq & \Gamma_{t}^n \lambda^{2n} J_+^2(t,x) n! \, 2^{n-1}\int_{t-h}^{t}h_{n-1}(t_n)
k\left(\frac{2(t-t_n)t_n}{t}\right)\ud t_n \\
\label{E:bound-Bn'}
 \leq & \Gamma_{t}^n \lambda^{2n} J_+^2(t,x) n! \, 2^{n-1}h_{n-1}(t) \int_{0}^{h}
k\left(\frac{2s(t-s)}{t}\right)\ud s.
\end{align}
By the dominated convergence theorem and \eqref{E:sup-J0}, it follows that
\begin{equation}
\label{E:B'-conv}
\lim_{h \downarrow 0}B_n'(t,h,x)= 0 \quad \mbox{uniformly in $x \in [-a,a]^d$}.
\end{equation}
Relation \eqref{E:t-cont2} follows from \eqref{E:AB'}, \eqref{E:A'-conv} and \eqref{E:B'-conv}.

\vspace{3mm}

{\em Step 3. (continuity in space)} We will prove that for any $t>0$ and $x \in \bR^d$,
\begin{equation}
\label{E:x-cont}
\lim_{|z| \to 0}\|J_n(t,x+z)-J_n(t,x)\|_p=0.
\end{equation}
For any $z \in \bR^d$, we have
\begin{eqnarray}
\nonumber
\|J_n(t,x+z)-J_n(t,x)\|_p & \leq & (p-1)^{n} \|J_n(t,x+z)-J_n(t,x)\|_2^2 \\
\label{E:C}
&=& (p-1)^n \frac{1}{n!} \, C_n(t,x,z),
\end{eqnarray}
where
\begin{eqnarray}
\label{def-C}
C_n(t,x,z)&=& (n!)^2 \|\widetilde{f}_n(\cdot,t,x+z)-\widetilde{f}_n(\cdot,t,x)\|_{\cH^{\otimes n}}^2 \\
\nonumber
&=& \int_{[0,t]^{2n}} \prod_{j=1}^{n}\gamma(t_j-s_j) \psi_{t,x,z}^{(n)}({\bf t},{\bf s})\ud{\bf t}\ud{\bf s}
\end{eqnarray}
and
\begin{align*}
\psi_{t,x,z}^{(n)}({\bf t},{\bf s})=& \frac{1}{(2\pi)^{nd}}  \int_{\bR^d} \cF (g_{{\bf t},t,x+z}^{(n)}-g_{{\bf t},t,x}^{(n)})(\xi_1,\ldots,\xi_n)\\
 & \times \overline{\cF (g_{{\bf s},t,x+z}^{(n)}-g_{{\bf s},t,x}^{(n)})(\xi_1,\ldots,\xi_n)} \mu(\ud \xi_1) \ldots \mu(\ud \xi_n).
\end{align*}
Similarly to the previous estimates, we have:
\begin{equation}
\label{E:bound-Cn}
C_n(t,x,z)\leq  \Gamma_t^n \int_{[0,t]^n}\psi_{t,x,z}^{(n)}({\bf t},{\bf t})\ud {\bf t}=\Gamma_t^n \sum_{\rho \in S_n} \int_{t_{\rho(1)}<\ldots<t_{\rho(n)}}\psi_{t,x,z}^{(n)}({\bf t},{\bf t})\ud {\bf t}.
\end{equation}
If $t_{\rho(1)}<\ldots<t_{\rho(n)}<t=t_{\rho(n+1)}$ then by \eqref{E:Fourier-gt},
\begin{eqnarray*}
\lefteqn{|\cF (g_{{\bf t},t,x+z}^{(n)}-g_{{\bf t},t,x}^{(n)})(\xi_1, \ldots,\xi_n)|^2=\lambda^{2n} \prod_{k=1}^{n} \exp \left( - \frac{t_{\rho(k+1)}-t_{\rho(k)}}{t_{\rho(k)}t_{\rho(k+1)}}
\left|\sum_{j=1}^{k}t_{\rho(j)}\xi_{\rho(j)}\right|^2 \right) } \\
& & \times\left|\exp\left[-\frac{i}{t}\left(\sum_{k=1}^{n}t_k \xi_k\right)\cdot (x+z) \right]
\int_{\bR^d} \exp\left\{-i \left[\sum_{k=1}^{n}\left(1-\frac{t_k}{t}\right)\xi_k\right] \cdot x_0\right\} G(t,x+z-x_0) u_0(\ud x_0)\right.\\
& & \left. -\exp\left[-\frac{i}{t}\left(\sum_{k=1}^{n}t_k \xi_k\right)\cdot x \right]
\int_{\bR^d} \exp\left\{-i \left[\sum_{k=1}^{n}\left(1-\frac{t_k}{t}\right)\xi_k\right] \cdot x_0\right\} G(t,x-x_0) u_0(\ud x_0) \right|^2.
\end{eqnarray*}
Inside the squared modulus above, we add and subtract the term
$$\exp\left[-\frac{i}{t}\left(\sum_{k=1}^{n}t_k \xi_k\right)\cdot x \right]\int_{\bR^d} \exp\left\{-i \left[\sum_{k=1}^{n}\left(1-\frac{t_k}{t}\right)\xi_k\right] \cdot x_0\right\} G(t,x+z-x_0) u_0(\ud x_0).$$
We obtain that
\begin{multline*}
|\cF (g_{{\bf t},t,x+z}^{(n)}-g_{{\bf t},t,x}^{(n)})(\xi_1, \ldots,\xi_n)|^2 \leq 2 \lambda^{2n} \prod_{k=1}^{n} \exp \left(- \frac{t_{\rho(k+1)}-t_{\rho(k)}}{t_{\rho(k)}t_{\rho(k+1)}}
\left|\sum_{j=1}^{k}t_{\rho(j)}\xi_{\rho(j)}\right|^2 \right)  \\
\times \left\{ \left|\exp\left[-\frac{i}{t}\left(\sum_{k=1}^{n}t_k \xi_k\right)\cdot z \right]-1\right|^2 J_0^2(t,x+z)+F^2(t,x,z)\right\},
\end{multline*}
where
\begin{equation}
\label{E:def-F}
F(t,x,z)=L(t,t,x,x+z)=\int_{\bR^d}|G(t,x+z-x_0)-G(t,x-x_0)|\:|u_0|(\ud x_0).
\end{equation}

Hence,
\begin{align*}
\psi_{t,x,z}^{(n)}({\bf t},{\bf t}) \leq & 2\lambda^{2n} \frac{1}{(2\pi)^{nd}}
\int_{\bR^{nd}}\mu(\ud \xi_1)\ldots \mu(\ud \xi_n) \prod_{k=1}^{n} \exp \left(- \frac{t_{\rho(k+1)}-t_{\rho(k)}}{t_{\rho(k)}t_{\rho(k+1)}}
\left|\sum_{j=1}^{k}t_{\rho(j)}\xi_{\rho(j)}\right|^2 \right) \\
 &\times  \left\{ \left|\exp\left[-\frac{i}{t}\left(\sum_{k=1}^{n}t_k \xi_k\right)\cdot z \right]-1\right|^2 J_0^2(t,x+z)+F^2(t,x,z)\right\}.
\end{align*}
Using \eqref{E:bound-Cn}, it follows that
\begin{equation}
\label{E:C2}
C_n(t,x,z)\leq 2 \left(C_n^{(1)}(t,x,z)+C_n^{(2)}(t,x,z) \right),
\end{equation}
where
\begin{align}
\nonumber
C_n^{(1)}(t,x,z)=& \Gamma_t^n \lambda^{2n} J_+^2(t,x+z) n! \frac{1}{(2\pi)^{nd}}\int_{0<t_1<\ldots<t_n<t} \int_{\bR^{nd}} \prod_{k=1}^{n}
\exp \left(- \frac{t_{k+1}-t_{k}}{t_{k}t_{k+1}}\left|\sum_{j=1}^{k}t_{j}\xi_{j}\right|^2 \right)
\\
\label{def-C1}
 &\times \left|\exp\left[-\frac{i}{t}\left(\sum_{k=1}^{n}t_k \xi_k\right)\cdot z \right]-1\right|^2 \mu(\ud \xi_1)\ldots \mu(\ud \xi_n) \ud t_1 \ldots \ud t_n,
\end{align}
and
\begin{align}
\nonumber
C_n^{(2)}(t,x,z)=& \Gamma_t^n \lambda^{2n} F^2(t,x,z) n! \frac{1}{(2\pi)^{nd}}\int_{0<t_1<\ldots<t_n<t} \int_{\bR^{nd}}
\exp \left(- \frac{t_{k+1}-t_{k}}{t_{k}t_{k+1}}\left|\sum_{j=1}^{k}t_{j}\xi_{j}\right|^2 \right) \\
\label{def-C2}
 &\times \mu(\ud \xi_1)\ldots \mu(\ud \xi_n) \ud t_1 \ldots \ud t_n.
\end{align}
By relation \eqref{E:L-conv}, $\lim_{|z|\to 0}J_+(t,x+z)=J_+(t,x)$. By the dominated convergence theorem,
$\lim_{|z|\to 0}C_{n}^{(1)}(t,x,z)=0$.
By \eqref{E:L-conv}, $\lim_{|z| \to 0}F(t,x,z)=0$, and hence $\lim_{|z|\to 0}C_{n}^{(2)}(t,x,z)=0$.
Relation \eqref{E:x-cont} follows from \eqref{E:C} and \eqref{E:C2}.
\end{proof}

\end{document}